\documentclass[12pt]{amsart}
\usepackage{amsfonts}
\usepackage{amssymb}
\usepackage{amsmath}
\input xy
\xyoption {all}

\usepackage{geometry}
\geometry{left=2.6cm, right=2.6cm, bottom=1.3in, top=1.3in, includefoot}

\usepackage{amsfonts}
\usepackage{amssymb}
\usepackage{amsmath}
\usepackage{amsthm}
\usepackage{latexsym}
\usepackage{amscd}

\newtheorem{theorem}{Theorem}[section] 
\newtheorem{cor}[theorem]{Corollary}
\newtheorem{defn}[theorem]{Definition}
\newtheorem{example}[theorem]{Example}
\newtheorem{lemma}[theorem]{Lemma}
\newtheorem{prop}[theorem]{Proposition}
\newtheorem{remark}[theorem]{Remark}

\newcommand{\mh}{\mbox{MHM}}
\newcommand{\ms}{\mbox{mHs}}
\newcommand{\mc}{\mbox{MHC}_y}
\newcommand{\mt}{\mbox{MHT}}
\newcommand{\co}{\mbox{Coh}}

\newcommand{\LL}{{\mathcal L}}
\newcommand{\U}{{\mathcal U}}

\newcommand{\OO}{{\mathcal O}}
\newcommand{\HC}{{\mathcal H}}
\newcommand{\VV}{{\mathcal V}}

\newcommand{\FC}{{\mathcal F}}
\newcommand{\MC}{{\mathcal M}}
\newcommand{\PC}{{\mathcal P}}

\newcommand{\Z}{\mathbb{Z}}

\newcommand{\Q}{\mathbb{Q}}

\newcommand{\C}{\mathbb{C}}

\begin{document}

\title[Equivariant characteristic classes]{Equivariant characteristic classes\\ of singular complex algebraic varieties}

\author[S. E. Cappell ]{Sylvain E. Cappell}
\address{S. E. Cappell: Courant Institute, New York University, New York City, NY 10012}
\email {cappell@cims.nyu.edu}

\author[L. Maxim ]{Laurentiu Maxim}
\address{L. Maxim : Department of Mathematics, University of Wisconsin-Madison, 480 Lincoln Drive, Madison WI 53706-1388}
\email {maxim@math.wisc.edu}

\author[J. Sch\"urmann ]{J\"org Sch\"urmann}
\address{J.  Sch\"urmann : Mathematische Institut,
          Universit\"at M\"unster,
         Einsteinstr. 62, 48149 M\"unster,
          Germany.}
\email {jschuerm@math.uni-muenster.de}

\author[J. L. Shaneson ]{Julius L. Shaneson}
\address{J. L. Shaneson: Department of Mathematics, University of Pennsylvania, 209 S 33rd St, Philadelphia, PA-19104}
\email {shaneson@sas.upenn.edu}

\subjclass[2000]{Primary 14L30, 57R20, 32S35, 14C30, 14D07; Secondary 57R45, 16E20, 58E40. }


\thanks{S. Cappell and J. Shaneson are partially supported by DARPA-25-74200-F6188. L. Maxim is partially supported by NSF-1005338. J. Sch\"urmann is supported by the
SFB 878 ``groups, geometry and actions". }

\date{\today}

\begin{abstract}
Homology Hirzebruch characteristic classes for singular varieties have been recently defined
by Brasselet-Sch\"urmann-Yokura as an attempt to unify previously known
characteristic class theories for {singular} spaces (e.g., MacPherson-Chern classes,
Baum-Fulton-MacPherson Todd classes, and Goresky-MacPherson $L$-classes, respectively). In this note we define equivariant analogues of these classes for {\em singular} quasi-projective varieties acted upon by a finite group of algebraic automorphisms,  and show how these can
be used to calculate the homology Hirzebruch classes of global quotient varieties. We also compute the new classes in the context of monodromy problems, e.g., for varieties that fiber equivariantly (in the complex topology) over a connected algebraic manifold. As another application, we discuss Atiyah-Meyer type formulae for twisted Hirzebruch classes of global orbifolds.
\end{abstract}

\maketitle

\tableofcontents

\section{Introduction}

Characteristic classes, generally defined for vector bundles, are very rich global cohomology
invariants measuring how far a local product structure is from being global. They are one of the
unifying concepts in algebraic topology, differential geometry and algebraic geometry. Characteristic classes of manifolds are defined via their tangent bundles, and provide a powerful tool in classification problems for manifolds (e.g., in surgery theory). In his seminal book \cite{H}, Hirzebruch provided a unifying theory of (cohomology) characteristic classes in the smooth context. More precisely, he defined a parametrized family of characteristic classes, $T_y$,
and showed that the well-known Chern, Todd, and respectively $L$-classes are just special cases of this, for $y=-1, 0, 1$, respectively.

Spaces with singularities on the other hand do not possess tangent bundles, their characteristic
classes being usually defined in homology. The problem of finding good notions of characteristic classes for singular spaces has been and still is the object of extensive research, e.g., see \cite{BFM,BFQ,CS,CSW,Che,GM1,MP,M}. In the complex algebraic context, the recently defined homology Hirzebruch classes of Brasselet, Sch\"urmann and Yokura \cite{BSY} have good functorial and normalization properties (e.g., for smooth spaces they are Poincar\'e dual to the usual cohomology Hirzebruch classes $T_y$, which appeared in the generalized Hirzebruch-Riemann-Roch theorem \cite{H}), and unify in a functorial sense the well-known Chern classes of MacPherson \cite{MP}, Todd classes of Baum, Fulton and MacPherson \cite{BFM}, and $L$-classes of Goresky-MacPherson \cite{GM1}, Cheeger \cite{Che} and Cappell-Shaneson \cite{CS1}, respectively.\\

One of the aims of this note is to construct an equivariant theory of homology Hirzebruch classes, namely the (Hodge-theoretic) {\em Atiyah-Singer classes} associated to any (possibly singular) quasi-projective variety $X$ acted upon by a finite group $G$ of algebraic automorphisms. The new classes, which for each $g \in G$  are denoted by $$ {T_y}_*(X;g) \in H_{ev}^{BM}(X^g)\otimes\C[y],$$ are supported on the fixed point sets of the action, and satisfy the  {\it normalization} property  asserting that if $X$ is non-singular  then  ${T_y}_*(X;g)$ is  Poincar\'e dual to the cohomology Atiyah-Singer class $T_y^*(X;g)$ defined implicitly in the statement of the holomorphic Lefschetz theorem (see \S\ref{ASm} for a definition of the latter):
\begin{equation}\label{intro} {T_y}_*(X;g)= T^*_y(X;g) \cap [X^g] \in H^{BM}_{ev}(X^g)\otimes\C[y]. \end{equation} 
Moreover, if $X$ is projective (but possibly singular), the degree of  the zero-dimensional component of ${T_y}_*(X;g)$ is the equivariant Hodge polynomial $\chi_y(X;g)$ studied in \cite{eCMS}, i.e., 
\begin{equation} \chi_y(X;g):=\sum_{i,p}(-1)^i {\rm trace} \left(g \vert {\rm Gr}^p_F H^i(X;\C) \right) \cdot (-y)^p =\int_{[X^g]} {T_y}_*(X;g),\end{equation} for $F^{\centerdot}$  the Hodge filtration of the canonical Deligne mixed Hodge structure on $H^*(X;\Q)$. If $X$ is not necessarily projective, it is more natural to consider the corresponding polynomial $\chi^c_y(X;g)$ defined similarly in terms of compactly supported cohomology. But even if  $X^g$ is projective, $\chi^c_y(X;g)$ does not necessarily agree with $\int_{[X^g]} {T_y}_*(X;g)$. In fact, if $\bar X$ is a (maybe singular) projective $G$-equivariant compactification of $X$, with $\partial X:={\bar X} \setminus X$ the part at infinity, then, by additivity, one gets for $X^g$ projective:
\begin{equation}\begin{split}
\chi^c_y(X;g)&=\chi_y({\bar X};g)-\chi_y(\partial X;g) \\ &= \int_{[X^g]} {T_y}_*(X;g)+ \int_{[{(\partial X)}^g]} [{T_y}_*(\bar X;g)-{T_y}_*(\partial X;g)] .
\end{split}\end{equation} So, if $\chi^c_y(X;g) \neq \int_{[X^g]} {T_y}_*(X;g)$, any $G$-equivariant compactification of $X$ must have $g$-fixed points ``at infinity".\\

The Hodge-theoretic Atiyah-Singer class ${T_y}_*(X;g)$ defined in this paper is an equivariant generalization of the homology Hirzebruch class ${T_y}_*(X)$ defined by Brasselet-Sch\"urmann-Yokura \cite{BSY} in the sense that 
\begin{equation}{T_y}_*(X;id)={T_y}_*(X) \otimes \C  \in H_{ev}^{BM}(X)\otimes\C[y].\end{equation}
In fact, the construction of our class ${T_y}_*(X;g)$ follows closely that of \cite{BSY}, and it comes in two flavors, a motivic one based on relative Grothendieck groups of $G$-equivariant varieties, and another one using $G$-equivariant mixed Hodge modules.

Let $K^G_0 (var/X)$ be  the relative Grothendieck group of $G$-equivariant quasi-projective varieties over $X$, i.e., the free abelian group of isomorphism classes $[Y \to X]$ of $G$-morphisms of such spaces, modulo the usual ``scissor" relation: 
$$[Y \to X]=[Z\to Y \to X]+[Y \setminus Z\to Y \to X],$$ for any $G$-invariant closed algebraic subspace $Z \subset Y$ (see also \cite{Bit}[Sect.7]). This group has the same functorial properties as in \cite{BSY}, e.g., push-forward (defined by composition of arrows), exterior product and open restrictions, as well as a forgetful functor $$K^G_0 (var/X) \to K^H_0 (var/X), $$ for any subgroup $H<G$. As in \cite{BSY,Sch3}, there is a natural transformation (as explained in the Appendix)
$$\chi^G_{\rm Hdg}: K^G_0 (var/X) \to K_0 (\mh^G(X))$$ to the Grothendieck group of $G$-equivariant mixed Hodge modules, mapping $[id_X]$ to the class of the constant  Hodge module $[\Q^H_X]$.

The {\em motivic Atiyah-Singer class transformation}
\begin{equation}
{T_y}_*(g):={\mt_y}_*(g) \circ \chi^G_{\rm Hdg} :K^G_0 (var/X) \to H_{ev}^{BM}(X^g)\otimes\C[y]
\end{equation}
can then be defined in terms of the {\em (Hodge-theoretic) Atiyah-Singer class transformation}
\begin{equation}
{\mt_y}_*(g) :K_0 (\mh^G(X)) \to H_{ev}^{BM}(X^g)\otimes\C[y^{\pm 1},(1+y)^{-1}]
\end{equation} on the Grothendieck group of $G$-equivariant mixed Hodge modules. The value taken by these transformations on $[id_X]$, resp. on  the (class of the) constant $G$-equivariant Hodge sheaf $\Q^H_X$ 
yields the Atiyah-Singer class ${T_y}_*(X;g)$ in  the even-degree (Borel-Moore) homology of the fixed-point set $X^g$, with complex polynomial coefficients. In the case when $g$ is the identity element of $G$, these transformations reduce to the complexification of their non-equivariant counterparts ${T_y}_*$ and ${\mt_y}_*$ from \cite{BSY,Sch3}.

One advantage of the mixed Hodge module approach is that we can  evaluate the transformation ${\mt_y}_*(g)$ on other interesting ``coefficients''. E.g.,  a ``good" $G$-equivariant variation $\LL$ of mixed Hodge structures on a smooth $X$ yields twisted Atiyah-Singer classes ${T_y}_*(X,\LL;g)$,  the $G$-equivariant intersection cohomology Hodge module $IC^H_X$ yields, for $X$ pure-dimensional, similar localized classes ${IT_y}_*(X;g)$, and  for a generically defined ``good" (i.e., graded polarizable, admissible and with quasi-unipotent monodromy at infinity) $G$-equivariant variation $\LL$ of mixed Hodge structures on $X$ we obtain twisted Atiyah-Singer classes ${IT_y}_*(X,\LL;g)$ associated to the $G$-equivariant twisted intersection Hodge module $IC^H_X(\LL)$. \\

The construction of these Atiyah-Singer class transformations also uses  Saito's theory of algebraic mixed Hodge modules  \cite{Sa} to first define an equivariant version of the motivic Chern class transformation of \cite{BSY,Sch3}, i.e., the {\em equivariant motivic Chern class transformation}:
\begin{equation}
\mc^G:K_0(\mh^G(X)) \to K_0({\rm Coh}^G(X)) \otimes \Z[y^{\pm 1}],
\end{equation}
for $K_0({\rm Coh}^G(X))$ the Grothendieck group of $G$-equivariant algebraic coherent sheaves on $X$. We want to emphasize here that our construction of $\mc^G$ relies heavily on our notion of {\em weak equivariant derived categories} (as defined in the Appendix) to adapt Saito's functors ${\rm gr}^F_pDR$ to this equivariant context. Our approach is much simpler than using the corresponding notion of equivariant derived categories of \cite{BL}, which a priori is not adapted to Saito's filtered de Rham functors. 

We next use (a suitable twisted version of) the Lefschetz-Riemann-Roch transformation of Baum-Fulton-Quart \cite{BFQ} and Moonen \cite{M}:
\begin{equation}
td_*(g)(-):K_0(\co^G(X)) \to H^{BM}_{ev}(X^g;\C)
\end{equation}
to obtain (localized) homology classes on the fixed-point set $X^g$.
In this way, many properties (and their proofs) of these transformations, e.g.,  functoriality under push-down for proper maps and restriction to open subsets as well as multiplicativity for exterior products,  can be obtained by formally adapting the corresponding ones from the non-equivariant context, as in \cite{BSY,Sch3}. While we are brief in describing such results, we emphasize much more the new results specifically related to the equivariant situation. \\

Over a point space, the transformation ${\mt_y}_*(g)$  coincides with the
equivariant $\chi_y(g)$-genus ring homomorphism $\chi_y(g):K^G_0(\ms^p) \to
\C[y,y^{-1}]$, defined on the Grothendieck group of the abelian category ${G-}\ms^p$ of $G$-equivariant (graded) polarizable mixed Hodge structures by 
\begin{equation} \chi_y(g)([H]):=\sum_p {\rm trace}(g \vert  Gr^p_F(H \otimes \C)) \cdot (-y)^p,
\end{equation}
for $F^{\centerdot}$ the Hodge filtration of $H \in {G-}\ms^p$. \\

One advantage of the motivic approach is that it unifies the Lefschetz-Riemann-Roch transformation $td_*(g)(-)$ (for $y=0$), as well as the equivariant Chern class transformation $c_*(g)(-):=c_*(tr_g(-|_{X^g}))$ (for $y=-1$) mentioned in \cite{Sch4}[Ex.1.3.2], which one gets by evaluating the MacPherson-Chern class transformation on the constructible function given by taking stalk-wise traces over $X^g$.  More precisely, there is a commutative diagram of transformations (see Remark \ref{normal2}):
\begin{equation}\label{comm}\begin{CD}
K_0(D^{b,G}_c(X;\Q)) @ <{rat} \circ \chi^G_{\rm Hdg} << K^G_0(var/X) @> {\rm MHC}^G_{y=0}\circ \chi^G_{\rm Hdg}>> K_0(\co^G(X)) \\
@V c_*(g) VV  @V {T_y}_*(g) VV @VV td_*(g)V\\
H^{BM}_{ev}(X^g;\C) @< y=-1 << H^{BM}_{ev}(X^g;\C)[y] @> y=0 >> H^{BM}_{ev}(X^g;\C) ,
\end{CD}\end{equation}
where $rat$ associates to a mixed Hodge module complex its underlying constructible sheaf complex.
Compared to the non-equivariant version of \cite{BSY}, what is missing up to now is a suitable equivariant $L$-class theory $L_*(g)$ (corresponding to $y=1$) for singular spaces. However, these localized $L$-classes are available in the smooth context (by the classical $G$-signature theorem, see \cite{AS}), as well as for suitable global orbifolds (by work of Hirzebruch-Zagier, \cite{HZ,Za}), but see also \cite{CSW}. \\

An important application of the Hodge-theoretic Atiyah-Singer classes is the computation of the  homology Hirzebruch class ${{T_y}}_*(X/G)$ of the global quotient $X/G$. More precisely, in Section \S \ref{comporb} we prove the following result (even with suitable twisted coefficients):
\begin{theorem}\label{thintro} Let $G$ be a finite group acting by algebraic automorphisms on the complex quasi-projective variety $X$.  Let  $\pi^g:X^g \to X/G$ be the composition of the projection map $\pi:X \to X/G$ with the inclusion $i^g:X^g \hookrightarrow X$. Then
\begin{equation}\label{Za}
{{T_y}}_*(X/G)=\frac{1}{\vert G \vert} \sum_{g \in G} \pi^g_*{{T_y}}_*(X;g)
\end{equation}
and, for $X$ pure-dimensional,
\begin{equation}\label{Zag}
{{IT_y}}_*(X/G)=\frac{1}{\vert G \vert} \sum_{g \in G} \pi^g_*{{IT_y}}_*(X;g).
\end{equation}
\end{theorem}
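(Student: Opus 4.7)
The plan is to identify $\Q^H_{X/G}$ and $IC^H_{X/G}$ as the $G$-invariants of pushed-forward equivariant mixed Hodge modules on $X/G$ (viewed as a $G$-variety with trivial action), and then reduce the identity to an averaging principle combined with the proper pushforward functoriality of ${\mt_y}_*(g)$. The first ingredient is the standard descent along the finite quotient $\pi : X \to X/G$, which in Saito's formalism lifts to isomorphisms
\[\Q^H_{X/G} \cong (\pi_* \Q^H_X)^G, \qquad IC^H_{X/G} \cong (\pi_* IC^H_X)^G\]
in $\mh(X/G)$. The second requires $X$ pure-dimensional so that both $IC^H_X$ and $IC^H_{X/G}$ are defined, and uses that $\pi_* IC^H_X$ is a semisimple pure Hodge module on $X/G$ (decomposition theorem) whose $G$-invariant part is identified with $IC^H_{X/G}$.

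The key intermediate statement is the following averaging principle: for $G$ acting trivially on a variety $Y$ and any $M \in \mh^G(Y)$,
\[{T_y}_*(M^G) = \frac{1}{|G|} \sum_{g \in G} {\mt_y}_*(g)(M).\]
After tensoring with $\C$, any such $M$ decomposes as $M \cong \bigoplus_V V \otimes M_V$ over the irreducible complex $G$-representations $V$, with $M_V \in \mh(Y)$. Since $Y^g = Y$ and the construction of ${\mt_y}_*(g)$ via $\mc^G$ and $td_*(g)$ is compatible with this tensoring, one has ${\mt_y}_*(g)(V \otimes M_V) = \mathrm{tr}(g \mid V) \cdot {T_y}_*(M_V)$. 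Summing over $g$ and applying orthogonality $\frac{1}{|G|}\sum_g \mathrm{tr}(g \mid V) = \delta_{V, \mathbf{1}}$ isolates the trivial isotypic component $M_{\mathbf{1}} = M^G$.

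Applying this averaging principle to $M = \pi_* \Q^H_X$ (respectively $\pi_* IC^H_X$) on $X/G$, and using the proper pushforward functoriality of the Atiyah-Singer transformation (established in earlier sections of the paper), together with $(X/G)^g = X/G$ and $\pi|_{X^g} = \pi^g$, gives
\[{\mt_y}_*(g)(\pi_* \Q^H_X) = \pi^g_*\, {T_y}_*(X;g), \qquad {\mt_y}_*(g)(\pi_* IC^H_X) = \pi^g_*\, {IT_y}_*(X;g).\]
Substituting into the averaging formula and combining with the first step produces
\[{T_y}_*(X/G) = {T_y}_*\bigl((\pi_* \Q^H_X)^G\bigr) = \frac{1}{|G|}\sum_g \pi^g_*\, {T_y}_*(X;g),\]
proving (\ref{Za}), and identically for (\ref{Zag}).

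The main obstacle is the averaging principle: one must verify that ${\mt_y}_*(g)$ is multiplicative when tensoring a non-equivariant mixed Hodge module on $Y$ (with trivial $G$-action) by a $G$-representation $V$. This should follow from the corresponding multiplicativity for $\mc^G$ at the level of the weak equivariant derived category of $G$-equivariant filtered de Rham complexes introduced in the paper, combined with the analogous multiplicativity of the Lefschetz-Riemann-Roch transformation $td_*(g)$ on trivially-acting isotypic components; carrying this through cleanly at each stage is where the real work lies. The descent statements in the first step for $IC^H_{X/G}$ also warrant some care, but are essentially contained in the standard theory of pure Hodge modules for finite surjective maps.
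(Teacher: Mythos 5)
Your argument is correct, and its overall architecture --- realize $\Q^H_{X/G}$ and $IC^H_{X/G}$ as the $G$-invariants of $\pi_*$ of equivariant Hodge modules, commute the class transformation with the projector $(-)^G$, then average over $G$ --- is the same as the paper's (Lemma \ref{L2}, Proposition \ref{L1}, Theorem \ref{orb}, Lemma \ref{L3}, Corollary \ref{twist}). Two sub-steps are handled differently, and are worth comparing. First, for the averaging you re-derive the trivially-acted case by isotypic decomposition and character orthogonality and then compose with covariance of ${\mt_y}_*(g)$ under $\pi_*$; the paper instead cites the Baum--Fulton--Quart/Moonen identity $td_*((\pi_*\FC)^G)=\frac{1}{\vert G\vert}\sum_g \pi^g_*\,td_*(g)(\FC)$ (formula (\ref{id3})) applied to $\FC=\mc^G([\Q^H_X])$, which packages the pushforward and the averaging into one step. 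These are equivalent: the multiplicativity ${\mt_y}_*(g)(V\otimes M_V)=\mathrm{trace}(g\,\vert\, V)\cdot {T_y}_*([M_V])$ that you flag as "the real work" follows from additivity of $\mathrm{gr}^F_pDR$ together with the module property of $td_*(g)$ applied to the trivial $G$-bundle $V\otimes\OO_Y$, so nothing beyond the already-cited properties of the Lefschetz--Riemann--Roch transformation is needed; your version has the merit of making the mechanism behind (\ref{id3}) explicit. Second, for $(\pi_*IC^H_X)^G\simeq IC^H_{X/G}$ you appeal to semisimplicity/the decomposition theorem; note that semisimplicity alone only says the invariant part is a direct sum of twisted intersection complexes and does not by itself exclude summands supported on proper subvarieties of $X/G$. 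The paper's Lemma \ref{L3} closes this more economically: since $\pi$ is finite, $\pi_*$ is exact on (equivariant) Hodge modules and commutes with the image characterization $j_{!*}=\mathrm{image}(j_!\to j_*)$, giving $\pi_*IC^H_X\simeq IC^H_{X/G}(\pi'_*\Q_{\U})$ outright, whence the invariants are $IC^H_{X/G}((\pi'_*\Q_{\U})^G)=IC^H_{X/G}$; you should either supply that argument or explicitly record that finite surjective maps carry intermediate extensions to intermediate extensions.
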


As an application of (\ref{comm}),  one deduces that equation (\ref{Za}) can be viewed as an equality of homology classes in $H_{ev}^{BM}(X/G)\otimes\C[y]$. In particular, we are allowed to specialize to $y=-1, 0, 1$. For $y=-1$, formula (\ref{Za}) specializes by the left square of (\ref{comm}) to the identity:
\begin{equation}
 c_*(X/G)=\frac{1}{\vert G \vert} \sum_{g \in G} \pi^g_*c_*(X^g),
\end{equation}
where $c_*$ is the rationalized MacPherson homology Chern class from \cite{MP}. This  formula already appears in Ohmoto's work \cite{O}[Sect.2.2]. For $y=0$, formula (\ref{Za}) specializes to the Todd class formula
\begin{equation}\label{to}
{{td}}_*(X/G)=\frac{1}{\vert G \vert} \sum_{g \in G} \pi^g_*{{td}}_*(X;g),
\end{equation}
appearing in the works \cite{BFQ,M} if $X$ (and therefore also $X/G$, by \cite{KS}[Cor.5.4]) has at most {\em Du Bois singularities} (e.g., $X$ is smooth or has rational singularities). Here we use the identification 
$$td_*(X/G):=td_*([\OO_{X/G}])={T_0}_*(X/G), \quad \text{as well as} \quad td_*(X;g):=td_*(g)([\OO_X])={T_0}_*(X;g)$$ for $X$ with at most Du Bois singularities, see \cite{BSY} as well as Lem.\ref{db}. 
Finally, for $y=1$ and $X$ smooth, formula (\ref{Za}) yields the first two equalities of the identity
\begin{equation}\label{LZa}
{{T_1}}_*(X/G)=\frac{1}{\vert G \vert} \sum_{g \in G} \pi^g_*{{T_1}}_*(X;g)=\frac{1}{\vert G \vert} \sum_{g \in G} \pi^g_*{L}_*(X;g)=L_*(X/G),
\end{equation}
where for the last equality we assume $X$ is projective. Here $L_*(X/G)$ is the Thom-Hirzebruch $L$-class of the compact oriented rational homology manifold $X/G$, and the last equality is due to Zagier  \cite{Za}. In particular, (\ref{LZa}) yields the following result, supporting a conjecture from \cite{BSY}:
\begin{cor}\label{id} Let $X$ be a projective $G$-manifold, with $G$ a finite group of algebraic automorphisms. Then 
 \begin{equation}
{{T_1}}_*(X/G)=L_*(X/G).
\end{equation}
\end{cor}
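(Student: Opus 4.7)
The plan is to derive the corollary as a direct consequence of the chain of equalities (\ref{LZa}). Since $X$ is smooth and projective, the work reduces to justifying each of the three displayed equalities in this setting.

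First, I would apply Theorem \ref{thintro}, equation (\ref{Za}), to $X$, obtaining
\begin{equation*}
{T_y}_*(X/G) \;=\; \frac{1}{|G|}\sum_{g \in G} \pi^g_*\, {T_y}_*(X;g)
\end{equation*}
as an identity in $H_{ev}^{BM}(X/G)\otimes\C[y]$. Specialization at $y=1$ is permissible because, by the commutative diagram (\ref{comm}) together with the normalization property (\ref{intro}) applied on the smooth components of each $X^g$, both sides are honestly polynomial in $y$, with no poles at $y=1$.

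Second, I would use the normalization (\ref{intro}) in the smooth case to identify each equivariant summand ${T_1}_*(X;g)$ with the equivariant $L$-class $L_*(X;g)$ arising in the Atiyah-Singer $G$-signature theorem. Concretely, ${T_1}_*(X;g) = T_1^*(X;g)\cap [X^g]$, and at $y=1$ the cohomology class $T_1^*(X;g)$, defined implicitly via the holomorphic Lefschetz theorem, is to be matched with the equivariant Hirzebruch $L$-class used in the $G$-signature theorem (a Hodge-index type comparison on the smooth fixed loci $X^g$, componentwise). This is the step I expect to be the main obstacle, since it requires carefully reconciling the Hodge-theoretic definition of $T_y^*(X;g)|_{y=1}$ (as packaged by the classes ${\mt_y}_*(g)$) with the topologically defined equivariant $L$-class of Atiyah-Singer.

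Finally, Zagier's theorem \cite{Za}, applied to the finite algebraic action of $G$ on the smooth projective variety $X$, supplies the third equality
\begin{equation*}
L_*(X/G) \;=\; \frac{1}{|G|}\sum_{g \in G} \pi^g_*\, L_*(X;g),
\end{equation*}
valid in $H_{ev}^{BM}(X/G;\Q)$, where $X/G$ is viewed as a compact oriented rational homology manifold. Concatenating the three equalities yields ${T_1}_*(X/G) = L_*(X/G)$, which is exactly the assertion of the corollary.
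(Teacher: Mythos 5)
Your proposal is correct and follows essentially the same route as the paper: Corollary \ref{id} is obtained there precisely by concatenating the three equalities of (\ref{LZa}), namely Theorem \ref{thintro} specialized at $y=1$, the identification ${T_1}_*(X;g)=L_*(X;g)$ on the smooth $X$ via the normalization property, and Zagier's theorem for $L_*(X/G)$. The step you flag as the main obstacle is handled in the paper essentially by definition (Remark \ref{23} and (\ref{L-class})): $L(X;g):=T^*_1(X;g)$ is, by direct inspection of the defining power series, the class appearing in the Atiyah--Singer $G$-signature theorem, so no further Hodge-index comparison is needed.
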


If $X$ is {smooth and projective}, the result of Theorem \ref{thintro} is proved by Moonen  \cite{M}[p.170] for a parametrized Todd class $\tau_y(X/G)$, which he only could define for global projective orbifolds. So, by comparison, one gets the identification of his parametrized Todd class with the un-normalized Hirzebruch class of the quotient $X/G$:
\begin{equation}
\widetilde{{T_y}}_*(X/G)=\tau_y(X/G).
\end{equation}
This un-normalized Hirzebruch class is just a suitable ``twisting'' of the class ${T_y}_*(X/G)$ from the statement of Thm.\ref{thintro}, given in degree $2i$ by
$$(1+y)^{i}\cdot T_{y,i}(X/G) =: \widetilde{T}_{y,i}(X/G) \in H^{BM}_{2i}(X/G)\otimes \Q[y].$$

Theorem \ref{thintro} is just a very special case of a more general functorial result (see Sect.\ref{EH}, (\ref{refor})) about the {\em equivariant Hirzebruch class transformation}
\begin{equation}
 {\mt_y^G}_*:= \oplus_{g \in G} \ \frac{1}{\vert G \vert} \cdot {\mt_y}_*(g)\;:  K_0(\mh^G(X)) \to  H^{BM}_{ev,G}(X;\Q) \otimes \C[y^{\pm 1},(1+y)^{-1}]\:,
\end{equation}
with 
$$H^{BM}_{ev,G}(X;\Q):=\left( \oplus_{g \in G} \ H^{BM}_{ev}(X^g;\Q) \right)^G$$ 
the ``delocalized'' $G$-equivariant Borel-Moore homology of $X$.

\begin{theorem}\label{thintro2} Let $G$ be a finite group acting by algebraic automorphisms on the complex quasi-projective variety $X$.  Let  $\pi^g:X^g \to X/G$ be the composition of the projection map $\pi:X \to X/G$ with the inclusion $i^g:X^g \hookrightarrow X$. Then the following diagram commutes:
\begin{equation}\begin{CD}
K_0(\mh^G(X)) @> {\mt_y^G}_* >> H^{BM}_{ev,G}(X;\Q) \otimes \C[y^{\pm 1},(1+y)^{-1}]\\
@V [-]^G\circ \pi_* VV @VV \oplus_{g \in G} \;\pi^g_*  V \\
K_0(\mh(X/G)) @> {\mt_y}_* >> H^{BM}_{ev}(X/G;\Q) \otimes \C[y^{\pm 1},(1+y)^{-1}].
\end{CD}\end{equation}
Here $[-]^G: K_0(\mh^G(X'))\to K_0(\mh(X'))$ is induced by the exact projector $(-)^G: \mh^G(X')\to \mh(X')$ on the $G$-invariant subobject, for $X'=X/G$ 
a quasi-projective variety with a trivial $G$-action.
\end{theorem}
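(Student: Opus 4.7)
The plan is to combine equivariant functoriality of ${\mt_y}_*(g)$ under $\pi_*$ with a Lefschetz-type averaging identity on the trivially acted quotient $Y := X/G$. Since $\pi\colon X\to X/G$ is finite (hence proper) and $G$-equivariant for the trivial $G$-action on $X/G$, the equivariant analogue of the proper-pushforward compatibility of ${\mt_y}_*$ proved in the non-equivariant setting of \cite{BSY,Sch3} yields, for each $g\in G$, a commutative square
\[ \xymatrix@C=3em{K_0(\mh^G(X)) \ar[r]^-{{\mt_y}_*(g)} \ar[d]_{\pi_*} & H_{ev}^{BM}(X^g)\otimes\C[y^{\pm1},(1+y)^{-1}] \ar[d]^{\pi^g_*}\\ K_0(\mh^G(Y)) \ar[r]^-{{\mt_y}_*(g)} & H_{ev}^{BM}(Y)\otimes\C[y^{\pm1},(1+y)^{-1}]} \]
in which $Y^g=Y$ has been used to identify the lower target. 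Taking the direct sum over $g\in G$ weighted by $1/|G|$, the clockwise composite applied to $\mathcal{M}\in K_0(\mh^G(X))$ is exactly $(\oplus_g \pi^g_*)\circ {\mt_y^G}_*(\mathcal{M})$, while the counterclockwise composite equals $\frac{1}{|G|}\sum_{g\in G}{\mt_y}_*(g)(\pi_*\mathcal{M})$. The theorem is thus reduced to proving
\begin{equation}\label{eq:trivaver}
{\mt_y}_*\bigl([\mathcal{N}]^G\bigr) = \frac{1}{|G|}\sum_{g\in G}{\mt_y}_*(g)(\mathcal{N})
\end{equation}
for every $\mathcal{N}\in K_0(\mh^G(Y))$ with $Y$ carrying the trivial $G$-action.

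To establish \eqref{eq:trivaver}, I would factor ${\mt_y}_*(g) = td_*(g)\circ\mc^G$ through the equivariant motivic Chern class transformation. Since $G$ acts trivially on $Y$, Saito's filtered de Rham functors ${\rm gr}^F_pDR$, formulated on the weak equivariant derived category of the Appendix, are $G$-equivariant and commute with the exact functor $(-)^G\colon \mh^G(Y)\to\mh(Y)$; passing to Grothendieck groups gives $\mc\bigl([\mathcal{N}]^G\bigr)=\bigl(\mc^G(\mathcal{N})\bigr)^G$ in $K_0(\co(Y))\otimes\Z[y^{\pm1}]$. On the Lefschetz-Riemann-Roch side, the isotypical decomposition $\mathcal{F}=\bigoplus_\rho\mathcal{F}_\rho\otimes V_\rho$ of a $G$-equivariant coherent sheaf on the trivially acted $Y$, together with the character orthogonality relation $\frac{1}{|G|}\sum_{g\in G}\chi_\rho(g)=\delta_{\rho,\mathrm{triv}}$, yields the classical averaging identity $td_*(\mathcal{F}^G)=\frac{1}{|G|}\sum_{g\in G} td_*(g)(\mathcal{F})$ of Baum-Fulton-Quart \cite{BFQ} and Moonen \cite{M}. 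Applying this to $\mathcal{F}=\mc^G(\mathcal{N})$ and combining it with the previous identity for $\mc$ immediately yields \eqref{eq:trivaver}.

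The main technical point I expect to require care is the naturality of $\mc^G$ with respect to the invariants functor, namely the identity $\mc([\mathcal{N}]^G)=(\mc^G(\mathcal{N}))^G$ used in Step 2. This requires that Saito's filtered de Rham construction ${\rm gr}^F_pDR$ genuinely commute with $(-)^G$ on $G$-equivariant mixed Hodge modules over $Y$; it is precisely here that the simpler \emph{weak} equivariant derived category setup of the Appendix pays off, since in the Bernstein-Lunts equivariant derived category of \cite{BL} the compatibility of ${\rm gr}^F_pDR$ with $G$-actions is not transparent. Once this naturality is in place, the equivariant pushforward compatibility used in Step 1 follows from the general proper-functoriality of \cite{BSY,Sch3} adapted equivariantly, and the Lefschetz-Riemann-Roch averaging identity is classical character theory.
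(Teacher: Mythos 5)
Your argument is correct and rests on the same ingredients as the paper's proof of Theorem \ref{gen}, of which Theorem \ref{thintro2} is the reformulation (\ref{refor}): the commutation of $\mc$ with the invariants projector on a trivially acted space (Proposition \ref{L1}), proper-pushforward functoriality of $\mc^G$ and of $td_*(g)$, and the Baum--Fulton--Quart/Moonen averaging identity. The only difference is one of packaging: the paper invokes the combined identity $td_*((\pi_*\FC)^G)=\frac{1}{|G|}\sum_{g}\pi^g_*\,td_*(g)(\FC)$ directly on $X$ at the coherent-sheaf level, whereas you first push each ${\mt_y}_*(g)$ down to $X/G$ and then average there via character orthogonality on the isotypical decomposition --- these two routes are equivalent by the covariance of $td_*(g)$, so your reduction to the identity \eqref{eq:trivaver} on the trivially acted quotient is a valid reorganization of the same proof.
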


\begin{remark}\rm The use of the delocalized Borel-Moore homology in the corresponding result for the equivariant Todd class transformation
\begin{equation}
td_*^G:=\oplus_{g \in G} \ \frac{1}{\vert G \vert} \cdot td_*(g):  K_0({\rm Coh}^G(X)) \to  H^{BM}_{ev,G}(X;\Q) 
\end{equation}
appears already in the work of Baum-Connes \cite{BC} in relation to noncommutative geometry. Similarly, it fits with the Todd class transformation of Toen \cite{To}, defined in the context of  Deligne-Mumford stacks, with $\left(\bigsqcup_{g \in G} X^g \right)/G$ underlying the inertia stack of the quotient stack $X//G$.
\end{remark}

Theorem \ref{thintro} is  used in the authors' paper \cite{CMSSY} for obtaining generating series formulae for the homology Hirzebruch classes ${{T_y}}_*(X^{(n)})$ of the symmetric products $X^{(n)}:=X^n/{\Sigma_n}$ of a (possibly singular) complex quasi-projective algebraic variety $X$ (see also \cite{MS10}).
In fact, by making use of Theorem \ref{thintro2}, such generating series results are formulated in \cite{CMSSY} for characteristic classes of symmetric products of pairs $(X,\MC)$, with $\MC \in D^b\mh(X)$ (compare also with \cite{MS09,MSS}, where the case of Hodge numbers and Hodge-Deligne polynomials is discussed).\\

As another illustration of the use of the new Atiyah-Singer classes, we compute them in the context of monodromy problems, e.g., for varieties which fiber equivariantly (in the topological sense) over a connected algebraic manifold. The results obtained in Section \S \ref{mon} are characteristic class versions of the formulae described in \cite{eCMS}, and equivariant generalizations of some of the Atyah-Meyer type results from \cite{CLMS,CLMS2,MS}. Of particular importance is the result of Theorem \ref{thm1}, which can  be regarded as an Atiyah-Singer type theorem ``with twisted coefficients":
\begin{theorem}\label{thintro3}
Let $X$ be a  complex quasi-projective manifold, and $\LL$ a ``good'' variation of mixed Hodge structures on $X$. Let $G$ be a finite group of algebraic automorphisms of $(X,\LL)$. Then for any $g \in G$ we have:
\begin{equation}\label{fourintro}
{T_y}_*(X,\LL;g)
=ch_{(1+y)}(\chi_y(\VV) \vert_{X^g})(g) \cap {T_y}_*(X;g) ,\end{equation} 
where $\chi_y(\VV)\in K^0_G(X)[y^{\pm 1}]$ is an (equivariant) 
$\chi_y$-characteristic of the associated complex algebraic vector bundle $\VV:=\LL \otimes_{\Q} \mathcal{O}_X$, with its induced Hodge filtration $\FC^{\centerdot}$, and $ch_{(1+y)}(-)(g)$ is a suitable equivariant twisted Chern character.
\end{theorem}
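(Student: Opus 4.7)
The plan is to factor the computation through the equivariant motivic Chern class transformation $\mc^G$ and the equivariant Todd (Lefschetz-Riemann-Roch) transformation $td_*(g)$, and then to track carefully the $(1+y)$-renormalization that distinguishes the normalized Atiyah-Singer class ${T_y}_*$ from its un-normalized cousin $\widetilde{{T_y}}_*$. All three steps parallel the non-equivariant argument of \cite{BSY,CLMS,CLMS2,MS}; the new inputs are the $G$-equivariant refinement of Saito's filtered de Rham functors via the weak equivariant derived categories of the Appendix, and the Baum-Fulton-Quart/Moonen equivariant localization to $X^g$.

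\textbf{Step 1 (motivic Chern class of a good VMHS).} Since $X$ is smooth and $\LL$ is good, the $G$-equivariant mixed Hodge module $\LL^H[d_X]$ has underlying filtered $\DC_X$-module equal to the flat bundle $(\VV, F^\centerdot\VV)$. Saito's associated graded de Rham complex then decomposes $G$-equivariantly, term-by-term, as a tensor product
\[
\operatorname{gr}^F_p DR(\LL^H[d_X]) \;\simeq\; \bigoplus_q \Omega^q_X \otimes_{\OO_X} \operatorname{Gr}^F_{p+q}\VV \,[d_X-q].
\]
Viewed in the weak equivariant derived category, this yields the multiplicativity identity
\[
\mc^G\bigl([\LL^H]\bigr) \;=\; \chi_y(\VV)\cdot \mc^G\bigl([\Q^H_X]\bigr) \quad \text{in } K_0(\co^G(X))\otimes\Z[y^{\pm 1}],
\]
where $\chi_y(\VV):=\sum_p[\operatorname{Gr}^p_F\VV]\,(-y)^p$ acts by cap product with the $G$-equivariant locally free sheaves $\operatorname{Gr}^p_F\VV$.

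\textbf{Steps 2--3 (push to $X^g$ and incorporate the $(1+y)$-twist).} The transformation $td_*(g)$ is a module homomorphism over the equivariant $K$-theory of $X$: capping on the left with a $G$-equivariant locally free sheaf $E$ on $X$ corresponds, after localization to $X^g$, to multiplication on the right by the equivariant Chern character $ch(g)(E|_{X^g})$. Applying $td_*(g)$ to Step~1 produces
\[
\widetilde{{T_y}}_*(X,\LL;g) \;=\; ch(g)\bigl(\chi_y(\VV)|_{X^g}\bigr) \,\cap\, \widetilde{{T_y}}_*(X;g).
\]
The normalization ${T_y}$ is recovered from $\widetilde{{T_y}}$ by rescaling degree-$2i$ Borel-Moore components by $(1+y)^{-i}$. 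Distributing this rescaling across the cap product and absorbing the $(1+y)$-factors into the Chern character on $X^g$ is precisely the substitution $\alpha_j\mapsto(1+y)\alpha_j$ on equivariant Chern roots that converts $ch(g)$ into the twisted Chern character $ch_{(1+y)}(-)(g)$ appearing in (\ref{fourintro}).

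\textbf{Main obstacle.} The essential difficulty is the multiplicativity in Step~1: one must verify that $\mc^G$, built from our weak equivariant derived categories, genuinely splits off the factor $\chi_y(\VV)$ when applied to the Hodge module of a good $G$-equivariant VMHS on a smooth $X$. This rests on the $G$-equivariant tensor-product formula for $\operatorname{gr}^F_p DR$ displayed above, together with the strict $G$-equivariance of the Hodge filtration on $\VV$ coming from admissibility of $\LL$. Once this multiplicativity is in place, Steps 2--3 are formal, being the $G$-equivariant, $X^g$-localized analogue of the standard Atiyah-Meyer calculation for twisted Hirzebruch classes.
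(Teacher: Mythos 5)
Your proposal is correct and follows essentially the same route as the paper: its Theorem \ref{module} gives exactly your Step 1 identity $\mc^G([\LL^H])=\chi_y(\VV)\otimes\lambda_y(T^*_X)$ via the $G$-equivariant graded de Rham decomposition, and the conclusion then follows by applying $td_*(g)$ and the Adams-operation renormalization $\Psi_{(1+y)}$, just as you describe. The only (cosmetic) difference is that the paper expands $ch(\lambda_y(T^*_X)\vert_{X^g})(g)$ in Chern roots of $T^*_{X^g}$ and the normal bundles $N^g_\theta$ to exhibit the cohomological class $\widetilde{T}^*_y(X;g)$ explicitly, whereas you invoke the module property of the Lefschetz--Riemann--Roch transformation to cap directly against the homology class $\widetilde{{T_y}}_*(X;g)$.
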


Here, to keep the exposition simple, we use the shorter notion of {\it ``good"
variation of mixed Hodge structures} for a  variation which is 
graded polarizable, admissible and with quasi-unipotent monodromy at infinity.\\

As an application of the above results, we obtain the following  Atiyah-Meyer type result for twisted Hirzebruch classes of global orbifolds:
\begin{theorem}\label{thintro4}
Let $G$ be a finite group of algebraic automorphisms of the (pure-dimensional) quasi-projective manifold $M$, with $\pi:M \to X:=M/G$ the projection map. Let $\LL$ be a local system on $X$, which generically underlies a good variation of mixed Hodge structures defined on a Zariski dense open smooth subset $U \subset X$, and let $\VV:=\LL \otimes_{\Q} \mathcal{O}_{X}$ be the associated complex algebraic vector bundle. Then:
\begin{equation}\label{fiveintro}
{IT_y}_*(X,\LL)
=ch_{(1+y)}(\chi_y(\VV) ) \cap {IT_y}_*(X) ,\end{equation} 
where $ch_{(1+y)}(\chi_y(\VV) ) \in H^{ev}(X;\Q[y^{\pm 1}])$ corresponds, by definition,  to $ch_{(1+y)}(\chi_y(\pi^*\VV) ) \in H^{ev}(M;\Q[y^
{\pm 1}])^G$ under the isomorphism $$\pi^*:H^{ev}(X;\Q[y^{\pm 1}]) \overset{\sim}{\to} H^{ev}(M;\Q[y^{\pm 1}])^G \subset H^{ev}(M;\Q[y^{\pm1}]).$$ Here $\pi^*(\LL)$ underlies a good variation of mixed Hodge structures on all of $M$, so that $\chi_y(\pi^*\VV)$ is the $\chi_y$-characteristic of the associated complex algebraic vector bundle $\pi^*\VV:=\pi^*\LL \otimes_{\Q} \mathcal{O}_M$, with its induced Hodge filtration $\FC^{\centerdot}$.
\end{theorem}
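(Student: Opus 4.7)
The plan is to reduce the desired formula on the singular quotient $X=M/G$ to an Atiyah-Meyer type computation on the smooth cover $M$ by means of the orbifold decomposition, and then to exploit the fact that the coefficient bundle is pulled back from the base, on which $G$ acts trivially.

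First, applying the twisted version of formula (\ref{Zag}) in Theorem \ref{thintro} (with the equivariant intersection Hodge module $IC^H_M(\pi^*\LL)$, obtained as the Hodge module extension from $\pi^{-1}(U)$) yields
$${IT_y}_*(X,\LL) = \frac{1}{\vert G \vert} \sum_{g \in G} \pi^g_* {IT_y}_*(M, \pi^*\LL; g).$$
Since $M$ is smooth, pure-dimensional, and $\pi^*\LL$ is assumed to underlie a good variation of mixed Hodge structures on all of $M$, the Hodge module $IC^H_M(\pi^*\LL)$ coincides, up to the usual shift, with the smooth admissible variation $\pi^*\LL$; hence ${IT_y}_*(M,\pi^*\LL;g) = {T_y}_*(M,\pi^*\LL;g)$. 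Applying Theorem \ref{thintro3} to each summand, I get
$${T_y}_*(M,\pi^*\LL;g) = ch_{(1+y)}\bigl(\chi_y(\pi^*\VV)\vert_{M^g}\bigr)(g) \cap {T_y}_*(M;g).$$

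The crucial observation is that the $G$-equivariant bundle $\pi^*\VV$, together with its Hodge filtration $\FC^{\centerdot}$, is pulled back from the base $X=M/G$, where $G$ acts trivially. Consequently, for every fixed point $x \in M^g$, the automorphism $g$ acts as the identity on the fiber $(\pi^*\VV)_x$ and on each graded piece $\mathrm{Gr}^p_{\FC}(\pi^*\VV)_x$. The eigenvalue expansion defining the equivariant twisted Chern character $ch_{(1+y)}(-)(g)$ then collapses to the ordinary Chern character, and since $\chi_y(\pi^*\VV) = \pi^*\chi_y(\VV)$ with the RHS well-defined by the prescription in the statement, we obtain
$$ch_{(1+y)}\bigl(\chi_y(\pi^*\VV)\vert_{M^g}\bigr)(g) = (\pi^g)^* ch_{(1+y)}(\chi_y(\VV)) \in H^{ev}(M^g;\Q[y^{\pm 1}]).$$

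The projection formula applied to $\pi^g: M^g \to X$ then gives
$$\pi^g_*\bigl[(\pi^g)^* ch_{(1+y)}(\chi_y(\VV)) \cap {T_y}_*(M;g)\bigr] = ch_{(1+y)}(\chi_y(\VV)) \cap \pi^g_* {T_y}_*(M;g).$$
Averaging over $G$, using once more that ${T_y}_*(M;g) = {IT_y}_*(M;g)$ since $M$ is smooth, and invoking the untwisted case of (\ref{Zag}) in Theorem \ref{thintro} applied to $\pi: M \to X$, I recognize the remaining sum as ${IT_y}_*(X)$, which yields the claimed identity (\ref{fiveintro}). The main obstacle I anticipate lies in the middle step: carefully verifying that the equivariant twisted Chern character of a $G$-equivariant bundle pulled back from a trivially-acted base reduces on each fixed-point locus to the ordinary Chern character, compatibly with the filtered $\chi_y$-construction and with the pullback along $\pi^g$. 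Once this identification is in place, steps 1, 2, and 4 are routine combinations of the cited results together with the classical projection formula.
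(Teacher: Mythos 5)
Your proposal is correct and follows essentially the same route as the paper's proof (Theorem \ref{ASorb}): the twisted orbifold decomposition from Corollary \ref{twist}, reduction to the smooth case via Theorem \ref{thm1}, triviality of the $g$-action on the pulled-back filtered bundle over each fixed-point set, and the projection formula. The only cosmetic difference is at the final step, where you apply the untwisted intersection-class formula (\ref{ZZ2}) directly, while the paper uses (\ref{Z}) together with the fact that $X$ is a rational homology manifold to identify ${T_y}_*(X)$ with ${IT_y}_*(X)$ --- these are equivalent.
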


Note that we cannot directly define $\chi_y(\VV) \in K^0(X)[y^{\pm 1}]$ as in  \cite{CLMS,CLMS2,MS} for the smooth context, since the Hodge filtration of $\VV|_U$ does not necessarily extend as a filtration by sub-vector bundles of $\VV$ to all of $X$. But this applies to $\pi^*\VV$ on the {\em smooth} variety $M$. So our definition above only works in the global orbifold situation $X=M/G$, and it is not available for more singular spaces. If, moreover, in the context of the above theorem, we assume that $M$ projective and $\LL$ is generically a good variation of {\em pure} Hodge structures, equation (\ref{fiveintro}) reduces for  $y=1$ to the first equality of:
\begin{equation}\label{id2}
{IT_1}_*(X,\LL)
=ch_{(2)}([\LL]_K ) \cap {L}_*(X) = L_*(X,\LL),
\end{equation} 
while the second equality is the Atiyah-Meyer formula of \cite{BCS} for twisted $L$-classes. Here, $[\LL]_K$ is the $K$-theoretical signature class of Meyer, associated to a suitable duality structure of $\LL$ coming from the polarization of $\LL|_U$. Indeed, $\pi^*[\LL]_K=[\pi^*\LL]_K$ can be identified with $\chi_1(\pi^*(\VV))$, by \cite{Sch3}[Cor.3.8].
Also, as $X$ is a rational homology manifold, we have that ${IT_1}_*(X)={T_1}_*(X)=L_*(X)$, with the last identification following from Cor.\ref{id}. In particular, equation (\ref{id2}) yields an equality between the twisted characteristic classes ${IT_1}_*(X,\LL)$ and $L_*(X,\LL)$.\\

If, more generally, we assume in the context of Thm.\ref{thintro4} that $\LL$ is only generically defined on $X$, but $\pi^*(\LL)$ extends to all of $M$, then formula (\ref{fiveintro}) does not necessarily hold. In the main body of the paper, we even provide a formula for the correction terms appearing in this situation, see formulae (\ref{def2}) and (\ref{def3}) of Remark \ref{defect}. It would be interesting to compare this result with the corresponding ``defect formula" of \cite{Ba} in the context of twisted $L$-classes.
\\

If, in the context of Thm.\ref{thintro4}, we additionally assume that $M$ is connected and $\pi^*(\LL)$ is constant, e.g., $M$ is simply-connected, equation (\ref{fiveintro}) reduces, by rigidity, to the multiplicative formula:
\begin{equation}\label{sixintro}
{IT_y}_*(X,\LL)
=\chi_y(\LL_x) \cdot {T_y}_*(X) ,\end{equation} where $\LL_x$ is the stalk of $\LL$ at some generic smooth point of $X$, with its induced mixed Hodge structure. Note that $\LL$ (and therefore also $\pi^*(\LL)$) is constant if $X$ is simply-connected. E.g., this is the case for the weighted projective spaces. So the calculation of the twisted Hirzebruch classes ${IT_y}_*(X,\LL)$ amounts in this case to understanding the Hirzebruch classes ${T_y}_*(X)$ of the global orbifold $X$.
\\

We conclude this introduction with an example, due to Moonen (\cite{M}[p.176]), on the calculation of (un-normalized) Hirzebruch classes 
$\widetilde{{T_y}}_*(\mathbb{P}^n(\textbf{w}))=\tau_y(\mathbb{P}^n(\textbf{w}))$
of weighted projective spaces $\mathbb{P}^n(\textbf{w})$. While weighted projective spaces of the same dimension have the same Hodge polynomials (e.g., see \cite{eCMS}[Rem.3.3(iv)]), Thm.\ref{thintro} can be used to show that these spaces are in fact distinguished by their Hirzebruch classes.

\begin{example}[Hirzebruch classes of weighted projective spaces]
Let $\mathbb{P}^n(\textbf{w})=\mathbb{P}^n/{G(\textbf{w})}$ be the weighted projective space, with $G(\textbf{w})=G({w}_0) \times \cdots \times G({w}_n)$ and $G({m})$ the multiplicative group of $m$-th roots of unity, acting by multiplication on the corresponding homogeneous coordinates.
Then 
\begin{equation}
(1+y)\cdot \pi^*\tilde{T}^*_y(\mathbb{P}^n(\textbf{w}))=\frac{deg(\pi)}{\vert G \vert} \sum_{0 \leq \alpha < 2\pi} \prod_{j=0}^n w_j x \frac{1+ye^{-w_j(x+i\alpha)}}{1-e^{-w_j(x+i\alpha)}}
\in H^{ev}( \mathbb{P}^n)\otimes \C[y]
\end{equation}
for $\pi: \mathbb{P}^n \to \mathbb{P}^n(\textbf{w})$ the projection map, and $x \in H^2(\mathbb{P}^n)$ the cohomology generator in degree $2$ dual to the fundamental class
of $\mathbb{P}^{n-1}\subset  \mathbb{P}^n$, with $x^{n+1}=0$. Here $d:=\frac{|G|}{deg(\pi)}$ is the greatest common divisor of the $w_j$ ($j=0,\dots,n$). Note that
$$\pi^*: H^{BM}_{ev}(\mathbb{P}^n(\textbf{w}))\otimes \Q[y]\simeq H^{ev}(\mathbb{P}^n(\textbf{w}))\otimes\Q[y]\to H^{ev}(\mathbb{P}^n)\otimes\Q[y]
\subset H^{ev}(\mathbb{P}^n)\otimes\C[y]$$
is injective, with image the $G(\textbf{w})$-invariant cohomology classes in $H^{ev}(\mathbb{P}^n)\otimes\Q[y]$.
Finally, only finitely many $\alpha$ contribute to the sum above, since 
$$\prod_{j=0}^n w_j x \frac{1+ye^{-w_j(x+i\alpha)}}{1-e^{-w_j(x+i\alpha)}}
\in \left(x^{n+1}\cdot \Z[[x]]\right)\otimes \C[y]\:,$$
if $w_j\cdot \alpha$ is not a multiple of $2\pi$ for all $0\leq j \leq n$. 
\end{example}

\section{Preliminaries}

\subsection{Characteristic classes in cohomology}\label{not}

Let $X$ be a complex (algebraic) manifold, and  $\Xi$ a rank $q$ complex (algebraic) vector bundle on $X$.
In what follows we say that a (total) cohomology characteristic class $\Phi$ of $\Xi$ is defined by a power series $f(\alpha)\in R[[\alpha]]$, with $R$ a $\Q$-algebra, if we have the following relation: $\Phi(\Xi)=\prod_{i=1}^q f(\alpha_i)\in H^{ev}(X)\otimes R$, where the  $\{ \alpha_i\}_{i=1}^q$ are the Chern roots of $\Xi$. 

In order to set the notations for the rest of the paper, let us introduce the following characteristic classes of a complex (algebraic) vector bundle $\Xi$ on the complex manifold $X$ (compare \cite{HZ}[pp.40-42]):
\begin{enumerate}
\item[(i)] The Chern class $c(\Xi)$, given by the power series $f(\alpha)=1+{\alpha}\in \Q[[\alpha]]$.
\item[(ii)] The $L$-class $L(\Xi)$, given by $f(\alpha)=\frac{\alpha}{\tanh \alpha}\in \Q[[\alpha]]$.
\item[(iii)] The Todd class $td(\Xi)$, given by $f(\alpha)=\frac{\alpha}{1-e^{-\alpha}}\in \Q[[\alpha]]$.
\item[(iv)] The ``normalized" Hirzebruch class $T_y(\Xi)$, given by $f_y(\alpha)=\frac{\alpha(1+y)}{1-e^{-\alpha(1+y)}}-\alpha y \in \Q[y][[\alpha]]$. Note that for various values of the parameter $y$ we obtain $T_0(\Xi)=td(\Xi)$, $T_1(\Xi)=L(\Xi)$ and $T_{-1}(\Xi)=c(\Xi)$.
\item[(v)] The ``un-normalized" class $\widetilde{T}_y(\Xi)$, given by $\tilde{f}_y(\alpha)=\frac{\alpha
(1+ye^{-\alpha})}{1-e^{- \alpha}}\in \Q[y][[\alpha]]$\footnote{The attribute ``(un-)normalized" refers to the fact that the defining power series $f_y(\alpha)$ and resp. $\tilde{f}_y(\alpha)$  satisfy:  ${f}_y(0)=1$, while $\tilde{f}_y(0)=1+y$.}. We also have that $\widetilde{T}_0^*(\Xi)=td(\Xi)$.
\item[(vi)] The class ${\U}_{\theta}(\Xi)$, given by $f(\alpha)=(1-e^{-\alpha-i\theta})^{-1}\in \C[[\alpha]]$, where $\theta \in \mathbb{R}$ is not divisible by $2\pi$.
\item[(vii)] The class $T_y^{\theta}(\Xi)$, given by $f(\alpha)=\frac{1+ye^{-i\theta-\alpha(1+y)}}{1-e^{-i\theta-\alpha(1+y)}}\in \C[y][[\alpha]]$, with $y$ and $\theta$ as before. Thus $T_0^{\theta}(\Xi)={\U}_{\theta}(\Xi)$.
\item[(viii)] The class $\widetilde{T}_y^{\theta}(\Xi)$, given by $f(\alpha)=\frac{1+ye^{-i\theta-\alpha}}{1-e^{-i\theta-\alpha}}\in \C[y][[\alpha]]$. So $\widetilde{T}_0^{\theta}(\Xi)={\U}_{\theta}(\Xi)$.
\end{enumerate}

As a convention, if $\Phi$ is one of the above characteristic classes, we write $\Phi(X)$ for the class of the holomorphic tangent bundle $T_X$ of $X$.
For a holomorphic vector bundle $\Xi$ on the complex manifold $X$, we let $\Omega(\Xi)$ denote the sheaf of germs of holomorphic sections of $\Xi$. In what follows, we omit the symbol $\Omega(-)$, and simply write $H^i(X;\Xi)$ in place of $H^i(X;\Omega(\Xi))$. 

\subsection{Atiyah-Singer classes of complex manifolds}\label{ASm}

Let $g$ be an automorphism of the pair $(X,\Xi)$, where $X$ is a compact complex manifold and $\Xi$ is a holomorphic bundle on $X$. Then $g$ induces automorphisms on the global sections $\Gamma(X;\Xi)$ of $\Xi$, and also on the higher cohomology groups $H^i(X;\Xi)$. The {\it $g$-holomorphic Euler characteristic} of $\Xi$ over $X$ is defined by:
\begin{equation}\label{1}\chi(X,\Xi;g):=\sum_i (-1)^i \cdot {\rm trace} \left(g \vert H^i(X;\Xi)\right).\end{equation}
The automorphism $g:X \to X$ also induces an automorphism of the holomorphic cotangent bundle $T^*_X$, so an automorphism of $(X,\Xi)$ induces an automorphism of the pair $(X, \Xi \otimes \Lambda^p T^*_X)$, $p \in \Z$. The following polynomial invariant is a parametrized version of $\chi(X,\Xi;g)$:
\begin{equation}\label{2}\chi_y(X,\Xi;g):=\sum_{p \geq 0} \chi(X,\Xi \otimes \Lambda^pT^*_X;g)
\cdot y^p.\end{equation}
Assume moreover that a finite group $G$ acts holomorphically on the  complex manifold $X$. Then for $g \in G$, the fixed-point set $X^g:=\{x \in X \vert \ gx=x\}$ is a complex submanifold of $X$ and $g$ acts on the normal bundle $N^g$ of $X^g$ in $X$. Since $X$ is complex, we have a decomposition $$N^g=\bigoplus_{0<\theta<2\pi} N^g_{\theta},$$ where each sub-bundle $N^g_{\theta}$ inherits a complex structure from that of  $X$, and $g$ acts as $e^{i\theta}$ on $N^g_{\theta}$.  

Recall that if $\Xi \in K_G(X)$ is a $G$-equivariant complex vector bundle on the topological space $X$ on which a finite group $G$ acts trivially, then we can write $\Xi$ as a sum $\Xi=\sum_i \Xi_i \otimes \chi_i$, for $\Xi_i \in K(X)$ and $\chi_i \in R(G)$, where $K(X)$ denotes the Grothendieck group of $\C$-vector bundles on $X$ and $R(G)$ is the complex representation ring of $G$ (see \cite{Se}[Prop.2.2]). We then define \begin{equation}\label{equivchern}ch(\Xi)(g):=\sum_i ch(\Xi_i) \cdot \chi_i(g) \in H^{ev}(X;\C),\end{equation} 
with $ch: K(X)\to H^{ev}(X;\Q)$ the Chern character and $\chi_i(g)\in \C$ the corresponding trace of $g$ (compare also with \cite{BFQ} for a corresponding definition in the algebraic context). So this transformation only depends on the cyclic subgroup of $G$ generated by $g$. In what follows, we apply this fact to the space $X^g$ on which $g$ acts trivially.

We can now state the following important result (compare also with \cite{HZ}[p.51/52]):
\begin{theorem}\label{AS}(The Atiyah-Singer holomorphic Lefschetz theorem, \cite{AS})\newline
Let $\Xi$ be a holomorphic vector bundle on a compact complex manifold $X$ and $g$ a finite order automorphism of $(X,\Xi)$. Then
\begin{equation}\label{AS0}
\chi(X,\Xi;g)=\langle ch(\Xi \vert_{X^g})(g) \cdot td(X^g) \cdot \prod_{0<\theta <2\pi} \U_{\theta}(N^g_{\theta}),[X^g] \rangle.
\end{equation}
Or, in parametrized version,
\begin{eqnarray}\label{AS1}
\chi_y(X,\Xi;g) &=& \langle ch(\Xi \vert_{X^g})(g) \cdot \widetilde{T}_y(X^g) \cdot \prod_{0<\theta <2\pi} \widetilde{T}_y^{\theta}(N^g_{\theta}),[X^g] \rangle \\
&=&\langle ch_{(1+y)}(\Xi \vert_{X^g})(g) \cdot {T}_y(X^g) \cdot \prod_{0<\theta <2\pi} {T}_y^{\theta}(N^g_{\theta}),[X^g] \rangle,
\end{eqnarray}
(the dot stands for the cup product in cohomology, while $\langle-,-\rangle$ denotes the non-degenerate bilinear evaluation pairing)
where, for a complex bundle $\Xi$ with Chern roots $\{\alpha_i\}_i$, we set $ch_{(1+y)}(\Xi):=\sum_{i=1}^{{\rm rk}(\Xi)} e^{(1+y)\alpha_i} \in H^{ev}(X)\otimes\Q[y]$.
\end{theorem}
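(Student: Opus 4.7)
Formula (\ref{AS0}) is the classical holomorphic Lefschetz fixed-point theorem of Atiyah--Singer, and the plan is to deduce it from the equivariant index theorem (Atiyah--Segal--Singer) applied to the Dolbeault complex $(\A^{0,*}(X;\Xi),\bar\partial)$, whose Euler characteristic equals $\chi(X,\Xi;g)$ by Dolbeault's theorem. Localization at the (smooth, complex) fixed submanifold $X^g$ produces an integral over $X^g$ in which, by multiplicativity and the splitting principle, the universal integrand factors as the coefficient term $ch(\Xi|_{X^g})(g)$ times the Todd class $td(X^g)$ (from the Dolbeault symbol along $X^g$) times the normal correction $\prod_{0<\theta<2\pi}\U_\theta(N^g_\theta)$, in which each factor $(1-e^{-\alpha-i\theta})^{-1}$ inverts the $g$-twisted $\bar\partial$-symbol on the eigenbundle $N^g_\theta$ on which $g$ rotates by $e^{i\theta}$.

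For the first equality of (\ref{AS1}) I would apply (\ref{AS0}) with $\Xi$ replaced by $\Xi\otimes \Lambda^p T^*_X$, weight by $y^p$, and sum over $p\geq 0$. The formal key is $\sum_{p} y^p\, ch(\Lambda^p E^*) = \prod_j(1+ye^{-\beta_j})$ for the Chern roots $\{\beta_j\}$ of a complex bundle $E$, applied to the equivariant decomposition $T_X|_{X^g} = T_{X^g}\oplus\bigoplus_\theta N^g_\theta$ on which $g$ acts trivially on $T^*_{X^g}$ and by $e^{-i\theta}$ on each $(N^g_\theta)^*$. Comparing with the defining power series of \S\ref{not} then gives
\[
\textstyle\sum_{p} y^p\, ch(\Lambda^p T^*_{X^g})\cdot td(X^g) = \widetilde{T}_y(X^g), \qquad \sum_{p} y^p\, ch(\Lambda^p (N^g_\theta)^*)(g)\cdot \U_\theta(N^g_\theta) = \widetilde{T}_y^\theta(N^g_\theta),
\]
and combining with the multiplicativity of $ch(\Xi|_{X^g})(g)$ produces the first equality.

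The second equality will follow by rescaling every Chern root by $(1+y)$. A direct manipulation of the power series in \S\ref{not} yields the identities $f_y(\alpha) = \tilde{f}_y(\alpha(1+y))/(1+y)$ and $f_y^\theta(\alpha) = \tilde{f}_y^\theta(\alpha(1+y))$, while $ch_{(1+y)}$ is by construction $ch$ after the same rescaling. Hence the ring endomorphism $\sigma$ of $H^{ev}(X^g)\otimes\C[y]$ that multiplies the degree-$2k$ component by $(1+y)^k$ carries the first integrand $I_1$ to $(1+y)^n\cdot I_2$, where $I_2$ is the second integrand and $n = \dim_\C X^g$; since $\sigma$ also scales the top-degree part by $(1+y)^n$, these two factors cancel after pairing with $[X^g]$, so the two integrals coincide. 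The one substantive obstacle in this program is the invocation of the Atiyah--Segal--Singer equivariant index theorem for the Dolbeault operator; after that the entire argument reduces to characteristic-class bookkeeping.
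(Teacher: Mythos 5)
Your proposal is correct; note that the paper itself gives no proof of this theorem, quoting it from Atiyah--Singer and Hirzebruch--Zagier, so the substantive input (the equivariant index theorem for the Dolbeault complex, yielding (\ref{AS0})) is treated as a black box in both places, and your derivation of the two parametrized identities from (\ref{AS0}) — summing over $\Xi\otimes\Lambda^pT_X^*$ with weights $y^p$ and then rescaling Chern roots by $(1+y)$ — is exactly the standard bookkeeping of \cite{HZ}. The only point worth making explicit is that in the second equality the exponent $n=\dim_{\C}X^g$ may vary over the connected components of $X^g$, so the cancellation of $(1+y)^{n}$ against the degree shift of $\sigma$ should be carried out component by component.
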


\begin{defn} The (total) Atiyah-Singer characteristic class of the pair $(X,g)$, for  $g$  a finite order automorphism of a complex manifold $X$, is defined as 
\begin{equation} T^*_y(X;g):={T}_y(X^g) \cdot \prod_{0<\theta <2\pi} {T}_y^{\theta}(N^g_{\theta}) \in H^{ev}(X^g) \otimes \C[y],
\end{equation}
or, in its ``un-normalized" form, 
\begin{equation} \widetilde{T}^*_y(X;g):=\widetilde{T}_y(X^g) \cdot \prod_{0<\theta <2\pi} \widetilde{T}_y^{\theta}(N^g_{\theta}) \in H^{ev}(X^g) \otimes \C[y],
\end{equation}
Similarly, we let 
\begin{equation}\label{td} td^*(X;g):= td(X^g) \cdot \prod_{0<\theta <2\pi} \U_{\theta}(N^g_{\theta})  \in H^{ev}(X^g)\otimes \C,\end{equation}
and note that $T^*_0(X;g)=\widetilde{T}^*_0(X;g)=td^*(X;g)$.
\end{defn}

\begin{remark}\label{23}\rm
It follows from the above definition that the classes $\widetilde{T}^*_y(X;g)$ and ${T}^*_y(X;g)$ are two parametrized versions of $td^*(X;g)$, which differ just by suitable powers of $1+y$ in each degree. Other important special values of these parametrized classes include, at $y=1$:
\begin{equation}\label{L-class}
L(X;g):={T}^*_1(X;g) \quad \text{and} \quad \tilde{L}(X;g):=\widetilde{T}^*_1(X;g)
\end{equation}
appearing in the equivariant signature theorem \cite{AS,HZ,Za}.
Also, for $y=-1$, we get the total (resp. top) Chern class of the fixed-point set (in $H^{ev}(X^g;\Q)$):
\begin{equation}\label{chern}
c(X^g)={T}^*_{-1}(X;g) \quad \text{resp.} \quad c^{\rm top} (X^g)=\widetilde{T}^*_{-1}(X;g).
\end{equation}
Note that there is no essential difference between the classes $\widetilde{T}^*_y(X;g)$ and ${T}^*_y(X;g)$, except for the specialization at $y=-1$. 
\end{remark}

It follows from the Atiyah-Singer holomorphic Lefschetz theorem that if $X$ is a compact complex manifold then the equivariant $\chi_y$-genus of $X$ defined by $(\ref{2})$ is the degree of the top-dimensional component of the (un-normalized) Atiyah-Singer class, that is,
\begin{equation}\label{26}
\chi_y(X;g)=\langle T^*_y(X;g), [X^g]\rangle=\langle \widetilde{T}^*_y(X;g), [X^g]\rangle.
\end{equation} 
So, by the above identifications and the equivariant signature theorem (cf. \cite{AS,HZ,Za}), this yields for $y=1$ that:
\begin{equation}
\chi_1(X;g)=\langle L(X;g), [X^g]\rangle=\langle \widetilde{L}(X;g), [X^g]\rangle =\sigma(X;g),
\end{equation}
with $\sigma(X;g)$ the $g$-equivariant signature. Also, for $y=-1$, (\ref{26}) just gives the Lefschetz fixed-point formula:
\begin{equation}
\chi_{-1}(X;g)=\langle c(X^g), [X^g]\rangle=\langle c^{\rm top}(X^g), [X^g]\rangle =\chi(X^g),
\end{equation}
with $\chi(X^g)$ the topological Euler characteristic of $X^g$. Note that 
$$\chi_{-1}(X;g)=\sum_{p \geq 0} \chi(X,\Lambda^pT^*_X;g)
\cdot (-1)^p=\sum_i (-1)^i \cdot {\rm trace} \left(g \vert H^i(X;\C)\right)$$ 
calculates the ``topological'' trace of $g$ on the complex cohomology of $X$. \\


One of the aims of this note is to define (motivic and Hodge-theoretic) Atiyah-Singer classes $$\widetilde{{T_y}}_*(X;g), \ \ {T_y}_*(X;g) \in H_{ev}^{BM}(X^g;\C)[y]$$ for any (possibly {\it singular}) quasi-projective variety $X$ acted upon by a finite group $G$ of algebraic automorphisms, so that these classes  satisfy the  {\it normalization} property  asserting that if $X$ is non-singular then: \begin{equation} \widetilde{{T_y}}_*(X;g)=\widetilde{T}^*_y(X;g) \cap [X^g] \ \ \ \text{and} \ \ \  {T_y}_*(X;g)= T^*_y(X;g) \cap [X^g]. \end{equation}
In the case when $X$ is a projective (but possibly singular) variety, by pushing down to a point we shall recover the equivariant $\chi_y$-genus studied in \cite{eCMS}. In other words, the polynomial   \begin{equation} \chi_y(X;g):=\sum_{i,p}(-1)^i {\rm trace} \left(g \vert {\rm Gr}^p_F H^i(X;\C) \right) \cdot (-y)^p\end{equation} (for $F^{\centerdot}$  the Hodge filtration of the canonical Deligne mixed Hodge structure on $H^*(X;\Q)$) should coincide with the degree of the zero-dimensional component of the Atiyah-Singer class:
\begin{equation} \chi_y(X;g)=\int_{[X^g]} \widetilde{{T_y}}_*(X;g)=\int_{[X^g]} {T_y}_*(X;g). \end{equation}

\subsection{Motivic Chern and Hirzebruch classes}\label{nonequiv}  In order to better motivate our construction of Atiyah-Singer classes in the singular context, we provide here a quick review of the main properties of homology Hirzebruch classes of (possibly singular) complex algebraic varieties, as developed by Brasselet, Sch\"urmann and Yokura in \cite{BSY} (see also \cite{CMS,MS,Sch3}).

Let $X$ be a complex algebraic variety. By building on Saito's functors (cf. \cite{Sa})
\begin{equation} gr^F_pDR: D^b\mh(X) \to D^b_{coh}(X)\end{equation}
(for  $D^b_{coh}(X)$ the bounded
derived category of sheaves of $\mathcal{O}_X$-modules with coherent
cohomology sheaves), one first defines a {\em motivic Chern class transformation}
\begin{equation}
 \mc: K_0(\mh(X)) \to K_0(D^b_{\rm coh}(X)) \otimes \Z[y^{\pm 1}]=K_0(\co(X)) \otimes \Z[y^{\pm 1}].
\end{equation}
After composing this with (a modified version of) the Baum-Fulton-MacPherson Todd class transformation \cite{BFM}  \begin{equation}td_*:K_0(\co(X)) \to H^{BM}_{ev}(X;\Q),\end{equation} linearly extended over $\Z[y^{\pm 1}]$,
the authors of \cite{BSY} (see also the refinements in \cite{Sch3}) defined the so-called {\em Hirzebruch class transformation} 
\begin{equation}{\mt_y}_*:K_0(\mh(X)) \to H^{BM}_{ev}(X)\otimes \Q[y^{\pm 1}] \subset H^{BM}_{ev}(X)\otimes \Q[y^{\pm 1},(1+y)^{-1}],\end{equation} which assigns classes in the Borel-Moore homology of $X$ (with polynomial coefficients) to any ($K_0$-class of a) mixed Hodge module on $X$. By its construction, the transformation ${\mt_y}_*$ commutes with push-down for proper maps.\\

For example, by applying these transformations to the class of the constant Hodge sheaf $\Q^H_X$, one defines the motivic Chern and resp. Hirzebruch class $mC_y(X)$, resp., ${T_y}_*(X)$ of $X$. If $X$ is smooth, this class is Poincar\'e dual to the total $\lambda$-class of the cotangent bundle, resp., the cohomology Hirzebruch class $T_y(X)$ defined in \S\ref{not}.

Over a point space, both transformations $\mc$ and ${\mt_y}_*$  coincide with the
$\chi_y$-genus ring homomorphism $\chi_y:K_0(\ms^p) \to
\Z[y,y^{-1}]$, which is defined on the Grothendieck group of (graded) polarizable mixed Hodge structures by 
\begin{equation} \chi_y([H]):=\sum_p {\rm dim} Gr^p_F(H \otimes \C) \cdot (-y)^p,
\end{equation}
for $F^{\centerdot}$ the Hodge filtration of $H \in \ms^p$. So if $X$ is a compact variety, by pushing down to a point it follows immediately that the degree of the zero-dimensional component of the homology Hirzebruch class ${T_y}_*(X)$ is the Hodge polynomial of $X$, defined as $\chi_y(X):=\sum_i(-1)^i\cdot \chi_y([H^i(X;\Q)]).$\\

The corresponding {\em motivic Hirzebruch class transformation} on the relative Grothendieck group of algebraic varieties over $X$ is defined in \cite{BSY,Sch3} as: 
$${T_y}_*:=\chi_{\rm Hdg} \circ {\mt_y}_*:K_0(var/X) \to H^{BM}_{ev}(X)\otimes \Q[y],$$ with $\chi_{\rm Hdg}: K_0(var/X) \to K_0(\mh(X))$ given by $[f:Y \to X] \mapsto [f_!\Q^H_Y]$. Then also ${T_y}_*$ commutes with push-down for proper maps, and it unifies as such a transformation in a functorial sense the well-known Chern classes of MacPherson \cite{MP}, Todd classes of Baum, Fulton and MacPherson \cite{BFM}, and $L$-classes of Goresky-MacPherson \cite{GM1}, Cheeger \cite{Che} and Cappell-Shaneson \cite{CS1}, respectively. But on the space level, i.e., for the homology Hirzebruch class ${T_y}_*(X):={T_y}_*([id_X])$ of $X$, more care is needed for these identifications.  For $y=-1$, the motivic Hirzebruch class of $X$ specializes into the rationalized MacPherson-Chern class $c_*(X) \otimes \Q$. For $y=0$, but for $X$ with at most Du Bois singularities, one recovers the Todd class $td_*(X)$ of Baum-Fulton-MacPherson. For $y=1$ and $X$ a compact rational homology manifold, it is only conjectured that one obtains the Thom-Milnor $L$-class $L_*(X)$.

One of the main purposes of this paper is to develop analogous equivariant theories of characteristic classes, and to use these new theories in order to understand characteristic classes of global quotient varieties (e.g., symmetric products of varieties).

\subsection{Background on the Lefschetz-Riemann-Roch transformation} An essential ingredient in our definition of Atiyah-Singer classes is the Lefschetz-Riemann-Roch transformation of Baum-Fulton-Quart \cite{BFQ} and Moonen \cite{M}. We recall here some of the main properties of this transformation. 

Let $X$ be a quasi-projective $G$-variety, for $G$ a finite group of algebraic automorphisms of $X$. Denote by $K_0(\co^G(X))$ the Grothendieck group of the abelian category $\co^G(X)$ of $G$-equivariant coherent algebraic sheaves on $X$. For each $g \in G$, the Lefschetz-Riemann-Roch transformation 
\begin{equation}
td_*(g)(-):K_0(\co^G(X)) \to H^{BM}_{ev}(X^g;\C)
\end{equation}
takes values in the even-degree part of the Borel-Moore homology of the fixed-point set $X^g$, and satisfies the following properties:
\begin{itemize}
\item {\it covariance:} $td_*(g)(-)$ is a natural transformation, in the sense that for a proper $G$-morphism $f:X \to Y$ of quasi-projective varieties the following diagram commutes:
\begin{equation}
\begin{CD}
 K_0(\co^G(X)) @> td_*(g) >> H^{BM}_{ev}(X^g;\C)\\
@V f_! VV @VV f^g_* V \\
K_0(\co^G(Y)) @> td_*(g) >>
H^{BM}_{ev}(Y^g;\C) 
\end{CD}
\end{equation}
Here $f_![\FC]:=\sum_{i\geq 0} (-1)^i \cdot [R^if_*\FC]$, and $f^g:X^g \to Y^g$ is induced by the $G$-map $f$.
\item {\it module:} For every $G$-space $X$, there is a commutative diagram:
\begin{equation}
 \begin{CD}
 K^G_0(X) \times  K_0(\co^G(X)) @> ch(-|_{X^g})(g) \times td_*(g) >> 
H^{ev}(X^g;\C) \times H^{BM}_{ev}(X^g;\C)\\
@V \otimes VV @VV \cap V\\
K_0(\co^G(X)) @> td_*(g) >> H^{BM}_{ev}(X^g;\C),
 \end{CD}
\end{equation}
with $K^G_0(X)$ the Grothendieck group of algebraic $G$-vector bundles. In particular,
if $\Xi$ is an algebraic  $G$-vector bundle on $X$,  then 
\begin{equation}\label{l1}td_*(g)([\OO(\Xi)])=ch(\Xi \vert_{X^g})(g) \cap td_*(X;g).\end{equation}
\item {\it exterior product:} Let $X$ and $X'$ be algebraic $G$- and $G'$-spaces, respectively. Then for $g\in G$ and $g'\in G'$, one has a commutative diagram:
\begin{equation}\label{multL}
 \begin{CD}
 K_0(\co^G(X)) \times K_0(\co^{G'}(X')) @> td_*(g)\times td_*(g') >>
H^{BM}_{ev}(X^g;\C)\times H^{BM}_{ev}(X'^{g'};\C)\\
@V \boxtimes VV @VV \times  V\\
K_0(\co^{G\times G'}(X\times X')) @> td_*\left( (g,g') \right)>> H^{BM}_{ev}(X^g\times X'^{g'};\C).
 \end{CD}
\end{equation}
\item  {\it normalization:} Assume $X$ is smooth. Then the natural map $K^0_G(X) \to K_0(\co^G(X))$ is an isomorphism, and 
\begin{equation}\label{l0}  td_*(g)(-)=ch(-|_{X^g})(g) \cdot td^*(X;g) \cap [X^g],
\end{equation}
for $td^*(X;g)$ defined by equation (\ref{td}). In particular,
\begin{equation} td_*(X;g):=td_*(g)([\OO_X])=td^*(X;g) \cap [X^g].\end{equation}
In general, for a possibly singular quasi-projective variety $X$ we set 
\begin{equation} td_*(X;g):=td_*(g)([\OO_X])\in H^{BM}_{ev}(X^g;\C).\end{equation}
\item {\it degree:} Assume $X$ is projective, so the constant map to a point is proper.  Pushing $\FC \in \co^G(X)$ down to a point gives by the covariance property that
\begin{equation}\label{l2} \chi(X,\FC;g)=\int_{[X]} td_*(g)([\FC]).\end{equation}
In particular, if $\FC$ is locally free, then (\ref{l1}) yields:
\begin{equation} \chi(X,\FC;g)=\langle ch(\FC \vert_{X^g})(g), td_*(X;g) \rangle.\end{equation}
\end{itemize}
\begin{remark}\rm
 In addition, the transformation $td_*(g)(-)$ commutes with restriction to open subsets, and for $g=id$ the identity element, it reduces to the complexified non-equivariant Todd transformation $td_*$ of Baum-Fulton-MacPherson \cite{BFM}. Note that \cite{BFQ} also constructs a  $K$-theoretic resp. Chow-group version of these transformations,
even for a more general notion of equivariant sheaves, but under the assumption that the fixed-point set $X^g$ is projective. This last assumption is not needed for the homology version used here, as proved in \cite{M}. Finally, the exterior product property is stated here in slightly more general terms than in \cite{BFQ,M}, but their proofs apply without modifications to the more general context mentioned above, because the transformation $td_*(g)(-)$ only depends on the action of the cyclic subgroup generated by $g$.
\end{remark}

\section{Equivariant motivic Chern classes}\label{equivc}
\subsection{Construction} We first construct a characteristic class transformation $\mc^G$ for the algebraic action of a finite group $G$ on a quasi-projective complex algebraic variety $X$:
\begin{multline}\label{de} \mc^G:K_0(\mh^G(X)))= K_0(D^{b,G} \mh(X)) \to \\ \to K_0(D_{\rm coh}^{b,G}(X))\otimes \Z[y^{\pm 1}]=
K_0({\rm Coh}^G(X)) \otimes \Z[y^{\pm 1}].
\end{multline}
Here we use the following notations:
\begin{itemize}
 \item $D^{b,G} \mh(X)$ is the category of $G$-equivariant objects in the derived category of algebraic mixed Hodge modules;
\item $\mh^G(X)$ is the abelian category of $G$-equivariant algebraic mixed Hodge modules on $X$;
\item $D_{\rm coh}^{b,G}(X)$ is the category of $G$-equivariant objects in the derived category $D_{\rm coh}^{b}(X)$ of bounded complexes of $\OO_X$-sheaves with coherent cohomology;
\item ${\rm Coh}^G(X)$ is the abelian category of $G$-equivariant coherent $\OO_X$-sheaves
\end{itemize}

\begin{remark}\rm
In all cases above, a $G$-equivariant element $\MC$ is just an element in the underlying additive category (e.g., $D^b\mh(X)$), with a $G$-action given by isomorphisms
$$\psi_g: \MC \to g_* \MC \quad (g\in G),$$  
such that $\psi_{id}=id$ and $\psi_{gh}=g_*(\psi_h)\circ \psi_g$ for all $g,h \in G$ (see 
\cite{MS09}[Appendix]). Note that many references (e.g., \cite{BFQ} or \cite{eCMS}) work with the corresponding isomorphisms $g^* \MC\to \MC$ defined by adjunction,
which are more natural for contravariant theories such as $K^0(X)$ or variations of mixed Hodge structures.
Also
these ``weak equivariant derived categories'' $D^{b,G}(-)$, are simpler and different than the corresponding equivariant derived categories in the sense of \cite{BL}, e.g., they are not triangulated in general. Nevertheless, one can define a suitable Grothendieck group, using ``equivariant distinguished triangles'' in the underlying derived category $D^b(-)$, and get  isomorphisms (cf. Lem.\ref{last}) $$ K_0(D^{b,G} \mh(X))=K_0(\mh^G(X))) \quad \text{and} \quad K_0(D_{\rm coh}^{b,G}(X))=
K_0({\rm Coh}^G(X)),$$ as explained in detail in Appendix A. This is enough for the purpose of this paper, since our characteristic class transformations are defined on the level of Grothendieck groups. Furthermore, as shown in Appendix A, this approach easily allows one to lift the usual calculus of Grothendieck functors like (proper) push-forward, exterior product and (smooth) pullback from the underlying non-equivariant to the equivariant context (similarly to the calculus of Grothendieck functors for the
equivariant derived categories in the sense of \cite{BL}). 
\end{remark}

Since Saito's natural transformations of triangulated categories (cf. \cite{Sa}) 
$${\rm gr}^F_p DR:D^b\mh(X) \to D^b_{\rm coh}(X)$$ commute with the push-forward $g_*$ induced by each $g \in G$ (since $g \in {\rm Aut}(X)$, so $g:X \to X$ is a proper map) we obtain an equivariant transformation (cf. Ex.\ref{66} in the Appendix) $${\rm gr}^F_p DR^G:D^{b,G}\mh(X) \to D^{b,G}_{\rm coh}(X).$$
Note that for a fixed $\MC \in D^{b,G} \mh(X)$, one has that ${\rm gr}^F_p DR^G(\MC)=0$ for all but finitely many $p \in \Z$. 
Therefore, we can now consider the cohomology $\left[H^*({\rm gr}^F_p DR^G (\MC))\right] \in K_0({\rm Coh}^G(X))$. This yields the following:
\begin{defn} The \underline{$G$-equivariant motivic Chern class transformation} 
$$\mc^G:K_0(\mh^G(X)) \to 
K_0({\rm Coh}^G(X)) \otimes \Z[y^{\pm 1}]$$
is defined by:
\begin{equation}\label{d}
\mc^G([\MC]):=\sum_{i,p}(-1)^i \left[H^i({\rm gr}^F_{-p} DR^G (\MC))\right] \cdot (-y)^p .
\end{equation}
 \end{defn}

\subsection{Properties}
By construction, the transformation $\mc^G$ commutes with proper push-down and restriction to open subsets. Moreover, for a subgroup $H$ of $G$, the transformation commutes with the obvious restriction functors ${\rm Res}^G_H$. For the trivial subgroup, this is just the forgetful functor ${\it For}:={\rm Res}^G_{\{id\}}$. If $G=\{id \}$ is the trivial group, $\mc^G$ is just the (non-equivariant) motivic Chern class transformation of \cite{BSY,Sch3}.\\

Our approach based on weak equivariant complexes of mixed Hodge modules allows us to formally extend most of the results (and their proofs) from \cite{BSY} and \cite{Sch3}[Sect.4,5] from the non-equivariant to the equivariant context considered here. For this type of results, we only give a brief account. For example:
\begin{lemma}\label{db} Let $G$ be a finite group of algebraic automorphisms of a complex quasi-projective variety $X$, with at most Du Bois singularities.  Then $${\rm MHC}^G_0([\Q^H_X])=[\OO_X] \in K_0({\rm Coh}^G(X)),$$ as given by the class of the structure sheaf with its canonical $G$-action.
\end{lemma}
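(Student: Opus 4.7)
The plan is to reduce to the non-equivariant result of Brasselet--Sch\"urmann--Yokura via naturality of Saito's filtered de Rham functor. By the definition (\ref{d}), the value at $y=0$ is
$$
{\rm MHC}^G_0([\Q^H_X]) = \sum_{i} (-1)^i \bigl[H^i({\rm gr}^F_0 DR^G(\Q^H_X))\bigr] \in K_0(\co^G(X)).
$$
Forgetting the $G$-structure, Saito's theory identifies ${\rm gr}^F_0 DR(\Q^H_X)$ in $D^b_{\rm coh}(X)$ with the Du Bois complex $\underline{\Omega}^0_X$. The Du Bois hypothesis on $X$ is precisely that the natural map $\OO_X \to \underline{\Omega}^0_X$ is a quasi-isomorphism, so ${\rm gr}^F_0 DR(\Q^H_X) \simeq \OO_X$ in $D^b_{\rm coh}(X)$. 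The alternating sum thus collapses to $[\OO_X]$ in $K_0(\co(X))$, which is the non-equivariant form of the claim.

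To upgrade this to an equality in $K_0(\co^G(X))$, I would observe that $\Q^H_X$ carries a canonical $G$-equivariant structure, with $\psi_g: \Q^H_X \to g_*\Q^H_X$ coming from the functoriality of the constant Hodge module under the automorphism $g: X\to X$. Since ${\rm gr}^F_0 DR$ commutes with the proper push-forward $g_*$ (as already used in the construction of ${\rm gr}^F_0 DR^G$ just above the lemma), applying it yields a $G$-equivariant object ${\rm gr}^F_0 DR^G(\Q^H_X) \in D^{b,G}_{\rm coh}(X)$. It remains to check that under the Du Bois quasi-isomorphism ${\rm gr}^F_0 DR(\Q^H_X) \simeq \OO_X$, the induced equivariant structure corresponds to the canonical one on $\OO_X$, i.e.\ the structure given by the standard pushforward isomorphisms $g_*\OO_X \cong \OO_X$.

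This last point is the technical heart of the argument and is essentially a naturality check: for each $g \in G$ one must verify that the square
$$
\begin{CD}
{\rm gr}^F_0 DR(\Q^H_X) @>\sim>> \OO_X\\
@V{\rm gr}^F_0 DR(\psi_g)VV @VV{\rm can}V\\
g_*\,{\rm gr}^F_0 DR(\Q^H_X) @>\sim>> g_*\OO_X
\end{CD}
$$
commutes, where the horizontal arrows are the Du Bois identifications. Both constituent isomorphisms, ${\rm gr}^F_0 DR(\Q^H_X) \simeq \underline{\Omega}^0_X$ from Saito's construction and $\OO_X \simeq \underline{\Omega}^0_X$ from the Du Bois hypothesis, are natural in $X$ with respect to algebraic morphisms, in particular for $g$; this yields the commutativity. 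The main obstacle is therefore purely bookkeeping, namely tracking Saito's natural isomorphism through its construction to confirm its compatibility with the functorial $G$-action on $\Q^H_X$ and the canonical $G$-action on $\OO_X$; no substantive new computation is required beyond these known naturalities.
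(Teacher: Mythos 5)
Your proposal is correct and follows essentially the same route as the paper: the paper invokes Saito's canonical morphism $\OO_X \to {\rm gr}^F_0 DR(\Q^H_X)$ (an isomorphism precisely in the Du Bois case) and notes that canonicity makes it automatically $G$-equivariant, which is the naturality check you spell out more explicitly. The only cosmetic difference is that you factor the identification through the Du Bois complex $\underline{\Omega}^0_X$, whereas the paper cites the canonical comparison map directly.
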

\begin{proof}
By \cite{Sa1}, there is a canonical morphism $$\OO_X \to {\rm gr}^F_{0} DR (\Q^H_X)$$ in $D^b_{\rm coh}(X)$, which is an isomorphism for $X$ with at most Du Bois singularities. So if, in addition, $X$ has a action of a finite group $G$ as above, then this becomes a $G$-equivariant isomorphism. In particular, $$[\OO_X]=[{\rm gr}^F_{0} DR (\Q^H_X)] \in K_0({\rm Coh}^G(X)).$$
\end{proof}

As another instance, let $X$ be a complex algebraic manifold of pure dimension $n$, together with a ``good'' variation $\LL$ of mixed Hodge structures (i.e., graded polarizable, admissible and with quasi-unipotent monodromy at infinity). This corresponds as in \cite{Sch3}[Ex.4.2] to a (shifted) {\it smooth} mixed Hodge module $\LL^H$ with underlying rational sheaf complex $rat(\LL^H)=\LL$.
So, the notion of a $G$-equivariant smooth mixed Hodge module is equivalent to that of a $G$-equivariant ``good'' variation of mixed Hodge structures. Moreover, $${\rm gr}^F_p DR^G(\LL^H)$$ is in this case just the corresponding graded part coming from the usual filtered twisted de Rham complex.  Indeed,  
let $\mathcal{V}:=\LL \otimes_{\Q} \mathcal{O}_X$ be the flat bundle
with holomorphic connection $\bigtriangledown$, whose sheaf of
horizontal sections is $\LL \otimes \C$. The bundle $\VV$ comes
equipped with a decreasing (Hodge) filtration by holomorphic
sub-bundles $\FC^p$, which satisfy the
Griffiths' transversality condition $$\bigtriangledown(\FC^p)
\subset \Omega^1_X \otimes \FC^{p-1}.$$ The bundle $\VV$ becomes a
holonomic $D$-module bifiltered by
$$W_k\VV:=W_{k-n}\LL \otimes_{\Q} \mathcal{O}_X,$$
$$F_p\VV:=\FC^{-p}\VV.$$
Note that since we work with a ``good'' variation, each $F_p\VV$ underlies a unique complex algebraic vector bundle; this can be seen by using GAGA and the logarithmic de Rham complex on a suitable algebraic compactification of $X$ (compare with \cite{Sch3}[Sect.3.4]). 
The above data constitutes the smooth algebraic mixed Hodge module $\LL^H[n]$. 
In fact, as $G$ is a group of algebraic automorphisms of the pair $(X,\LL)$, all this data is compatible with the $G$-action, making $\LL^H[n]$ into a smooth $G$-equivariant mixed Hodge module, so $\LL^H[n] \in \mh^G(X)$.
By Saito's construction \cite{Sa}, $(DR(\LL^H),F_{-\centerdot })$ coincides with the usual filtered de Rham
complex $(\Omega_X^{\centerdot}(\mathcal{V}), F^{\centerdot})$ with the
filtration on the latter being induced by Griffiths' transversality, i.e.,
$$F^p \Omega_X^{\centerdot}(\mathcal{V}):= \left[ \mathcal{F}^p
\overset{\bigtriangledown}{\to} \Omega_X^1 \otimes \mathcal{F}^{p-1}
\overset{\bigtriangledown}{\to} \cdots
\overset{\bigtriangledown}{\to} \Omega_X^i \otimes \mathcal{F}^{p-i}
\overset{\bigtriangledown}{\to} \cdots \right].$$ Moreover, the $G$-action on $(X,\LL)$ makes the filtered de Rham complex and its associated graded pieces into holomorphic $G$-equivariant complexes. As in \cite{Sch3}[Ex.5.8], this yields the following result, analogous to the module property (\ref{l1}):
\begin{theorem}\label{module}
Let $G$ be a finite group of algebraic automorphisms of a complex quasi-projective manifold $X$ of pure-dimension $n$. Let $\LL$ be a $G$-equivariant ``good'' variation of mixed Hodge structures on $X$. Then:
\begin{equation}
\mc^G([\LL^H])=\chi_y(\VV) \otimes \lambda_y(T^*_X) \in K^0_G(X) \otimes
\Z[y,y^{-1}] \simeq K_0(\co^G(X)) \otimes
\Z[y,y^{-1}], 
\end{equation} 
where
$$\chi_y(\VV):=\sum_p \left[{\rm Gr}^{p}_{\mathcal{F}} \VV \right]
\cdot (-y)^{p} \in K^0_G(X)[y,y^{-1}]$$
is the $\chi_y$-characteristic of $\VV$, and $$\lambda_y(T^*_X):=\sum_p [\Lambda^pT^*_X] \cdot y^p \in K^0_G(X) \otimes
\Z[y]$$ is the total
$\lambda$-class of $T^*_X$. 
In particular, the following normalization property holds for $X$ smooth quasi-projective:
\begin{equation}\label{normalc}\mc^G([\Q_X^H])=\lambda_y(T^*_X) \in K^0_G(X) \otimes
\Z[y]=K_0(\co^G(X)) \otimes
\Z[y].
\end{equation}
\end{theorem}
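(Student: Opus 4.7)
The plan is to reduce the computation to a direct calculation on the filtered twisted de Rham complex, leveraging the identifications already recorded in the discussion preceding the statement. By construction, $\LL^H[n]$ is a smooth $G$-equivariant mixed Hodge module, and Saito's identification gives a $G$-equivariant isomorphism between $(DR(\LL^H), F_{-\bullet})$ and the filtered algebraic twisted de Rham complex $(\Omega_X^{\bullet}(\VV), F^{\bullet})$ with Griffiths' filtration. Consequently ${\rm gr}^F_{-p} DR^G(\LL^H)$ may be identified, as an object of $D^{b,G}_{\rm coh}(X)$, with ${\rm gr}^p_F \Omega_X^{\bullet}(\VV)$, where the translation of indices uses the paper's convention $F_p\VV = \FC^{-p}\VV$.

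First I would compute these graded pieces explicitly. The $i$-th term of $F^p \Omega_X^{\bullet}(\VV)$ is $\Omega_X^i \otimes \FC^{p-i}$, so the $i$-th term of ${\rm gr}^p_F \Omega_X^{\bullet}(\VV)$ is $\Omega_X^i \otimes {\rm Gr}^{p-i}_{\FC} \VV$. Griffiths' transversality ensures that the induced differentials are $\OO_X$-linear, so ${\rm gr}^p_F \Omega_X^{\bullet}(\VV)$ is a bounded complex of $G$-equivariant locally free $\OO_X$-modules (the ``good'' hypothesis, via GAGA on a $G$-equivariant algebraic compactification, guarantees that each $\FC^p$ is a $G$-equivariant algebraic sub-bundle of $\VV$, as in \cite{Sch3}[Sect.3.4]).

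Next I would pass to the Grothendieck group. Since ${\rm gr}^p_F \Omega_X^{\bullet}(\VV)$ is a bounded complex in the abelian category $\co^G(X)$, taking Euler characteristics yields
\begin{equation*}
\sum_i (-1)^i [H^i({\rm gr}^p_F \Omega_X^{\bullet}(\VV))] \;=\; \sum_i (-1)^i [\Omega_X^i] \cdot [{\rm Gr}^{p-i}_{\FC} \VV]
\end{equation*}
in $K_0(\co^G(X)) \simeq K^0_G(X)$ (using smoothness of $X$). Substituting into definition (\ref{d}) of $\mc^G$ and re-indexing the double sum via $q = p-i$, the sum factors as
\begin{equation*}
\mc^G([\LL^H]) \;=\; \left(\sum_q (-y)^q [{\rm Gr}^q_{\FC} \VV]\right) \cdot \left(\sum_i y^i [\Omega_X^i]\right) \;=\; \chi_y(\VV) \otimes \lambda_y(T^*_X).
\end{equation*}
The normalization (\ref{normalc}) is then the specialization to the trivial variation $\LL = \Q_X$, for which $\VV = \OO_X$ is concentrated in Hodge weight zero so that $\chi_y(\VV) = [\OO_X]$.

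The main point I expect to verify carefully is the $G$-equivariance throughout the argument: that Saito's identification of $DR(\LL^H)$ with the twisted de Rham complex is compatible with the $G$-action at the level of the weak equivariant derived category (so it really lives in $D^{b,G}_{\rm coh}(X)$, and not merely in $D^b_{\rm coh}(X)$ after forgetting $G$), and that the Hodge filtration pieces $\FC^p \subset \VV$ inherit genuine $G$-equivariant algebraic vector bundle structures rather than only analytic ones. Once this equivariant bookkeeping is in place, the argument is essentially a $G$-equivariant upgrade of the non-equivariant computation in \cite{Sch3}[Ex.5.8], and the displayed formula follows from the factorization above.
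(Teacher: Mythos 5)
Your argument is correct and follows essentially the same route as the paper, which sets up the identification of ${\rm gr}^F_{-p}DR^G(\LL^H)$ with the graded pieces of the $G$-equivariant filtered twisted de Rham complex $(\Omega_X^{\centerdot}(\VV),F^{\centerdot})$ and then defers the Euler-characteristic computation to \cite{Sch3}[Ex.5.8]; you simply carry out that computation (the term-by-term identification $\Omega_X^i\otimes {\rm Gr}^{p-i}_{\FC}\VV$ and the re-indexing $q=p-i$) explicitly, and your sign bookkeeping is right. The equivariance points you flag at the end are exactly the ones the paper handles in the discussion preceding the theorem (the $G$-action makes the filtered de Rham complex and its graded pieces $G$-equivariant, and goodness plus GAGA on a compactification gives the algebraic structure), so nothing is missing.
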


\begin{remark}\rm For $X=pt$ a point space, there is an identification $\mh^G(pt) \simeq {G-}\ms^p$ of $G$-equivariant mixed Hodge modules over a point with the $G$-equivariant (graded) polarizable mixed Hodge structures, so that for $[H] \in K^G_0(\ms^p)$ we get:
$$\mc^G([H])=\chi_y([H]):=\sum_p \left[{\rm Gr}^{p}_{\mathcal{F}} H \right]
\cdot (-y)^{p} \in K^0_G(pt)[y,y^{-1}],$$ with $K^0_G(pt)$ the complex representation ring of $G$.

\end{remark}

For the proof of the multiplicativity of $\mc^G$ with respect to exterior products, a slightly more general result than the above theorem is needed. Let $X$ be a  complex quasi-projective $G$-manifold of pure-dimension $n$, with $D$ a $G$-invariant simple normal crossing divisor in $X$. Let $$j:U:=X \setminus D \hookrightarrow X$$ be the open inclusion, and $\LL$ a $G$-equivariant ``good'' variation of mixed Hodge structures on $U$. By Saito's theory \cite{Sa}, 
$${\rm gr}^F_p DR^G(j_*\LL^H)$$ 
is in this case just the corresponding graded part coming from the usual filtered twisted meromorphic de Rham complex with its induced $G$-action. Moreover, the inclusion of the $G$-equivariant twisted logarithmic de Rham complex into the latter is a  filtered quasi-isomorphism  (see also \cite{Sch3}[Thm.5.1]). This yields then the following result (compare with \cite{Sch3}[Ex.5.8]):
\begin{equation}\label{log}
\mc^G([j_*\LL^H])=\chi_y(\bar{\VV}) \otimes \lambda_y(\Omega^1_X(\log D)) \in K^0_G(X)\otimes \Z[y,y^{-1}] \:,
\end{equation} 
where $\bar{\VV}$ is the canonical Deligne extension of $\VV$, with its induced ``Hodge'' filtration $\bar{\FC}^{\centerdot}$ by algebraic sub-bundles extending the ``Hodge'' filtration of $\VV$ (by our ``goodness'' assumption).\\

We can now state the following multiplicativity property of the equivariant motivic Chern class, analogous to (\ref{multL}):
\begin{theorem}
 Let $X$ and $X'$ be algebraic quasi-projective $G$- and $G'$-varieties, respectively. Then one has a commutative diagram:
\begin{equation}\label{multC}
{\small \begin{CD}
 K_0(\mh^G(X)) \times K_0(\mh^{G'}(X')) @> \mc^G \times \mc^{G'} >>
\left(K_0(\co^G(X)) \times K_0(\co^{G'}(X'))\right)[y^{\pm 1}] \\
@V \boxtimes VV @VV \boxtimes  V\\
K_0(\mh^{G\times G'}(X\times X')) @> \mc^{G \times G'}>> K_0(\co^{G\times G'}(X\times X'))[y^{\pm 1}] .
 \end{CD}}
\end{equation}
\end{theorem}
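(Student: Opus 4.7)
The plan is to reduce the commutativity of (\ref{multC}) to the non-equivariant Künneth-type compatibility of Saito's filtered de Rham functor with exterior products, and then to check that each natural isomorphism involved respects the weak equivariant structures of Appendix A. Since $\mc^G$ is defined as the alternating sum of the classes $[H^i(\mathrm{gr}^F_{-p} DR^G(\MC))]$ in $K_0(\co^G(X))$, weighted by $(-y)^p$, the statement reduces to a single compatibility at the level of $\mathrm{gr}^F_p DR^G$.

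I would proceed in three steps. \textbf{Step 1.} Recall that Saito's theory (cf.\ \cite{Sa}) provides an exterior product $\boxtimes : D^b\mh(X)\times D^b\mh(X')\to D^b\mh(X\times X')$ together with a natural Künneth-type isomorphism
$$\mathrm{gr}^F_p DR(\MC\boxtimes\MC')\simeq \bigoplus_{q+q'=p} \mathrm{gr}^F_q DR(\MC)\boxtimes \mathrm{gr}^F_{q'} DR(\MC')$$
in $D^b_{\mathrm{coh}}(X\times X')$; this is precisely the ingredient underlying the non-equivariant multiplicativity of $\mc$ proved in \cite{Sch3}[Sect.5], combined with the standard Künneth formula for exterior products of coherent sheaves. \textbf{Step 2.} Observe that $\boxtimes$ lifts to the weak equivariant setting: given $\psi_g:\MC\to g_*\MC$ and $\psi'_{g'}:\MC'\to g'_*\MC'$, the canonical identification $(g,g')_*(\MC\boxtimes\MC')\simeq g_*\MC\boxtimes g'_*\MC'$ together with the functoriality of $\boxtimes$ yields a cocycle $\psi_g\boxtimes\psi'_{g'}$ making $\MC\boxtimes\MC'$ into an object of $D^{b,G\times G'}\mh(X\times X')$. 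The same construction on the coherent side provides the right-hand vertical arrow in (\ref{multC}) via the Appendix A identifications $K_0(D^{b,G}_{\mathrm{coh}}(-))\simeq K_0(\co^G(-))$ and its analogue for $G\times G'$. \textbf{Step 3.} Verify that the Künneth isomorphism of Step 1 is $G\times G'$-equivariant for the actions constructed in Step 2, so that it descends to an equality of classes in $K_0(D^{b,G\times G'}_{\mathrm{coh}}(X\times X'))$. Summing over $i$ and $p$ with the appropriate signs and weights, and applying the definition (\ref{d}) of $\mc^{G}$, $\mc^{G'}$, and $\mc^{G\times G'}$, then gives the desired identity $\mc^{G\times G'}([\MC]\boxtimes[\MC'])=\mc^G([\MC])\boxtimes\mc^{G'}([\MC'])$.

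The main obstacle is the equivariance check in Step 3: one must confirm that the natural isomorphism in Step 1 intertwines the two $G\times G'$-actions, not merely that the underlying complexes are abstractly isomorphic. This is however a direct consequence of the naturality of $\mathrm{gr}^F_p DR$ with respect to proper morphisms (here the $g$ and $g'$), which is already the property invoked in the body of the paper to upgrade $\mathrm{gr}^F_p DR$ to $\mathrm{gr}^F_p DR^G$ via Example \ref{66} of the Appendix. Once this naturality is packaged into the $G\times G'$-equivariant framework of Appendix A, the remainder is a formal bookkeeping computation that parallels the non-equivariant proof in \cite{Sch3}[Sect.5], and the same argument applies with essentially no modification to the other multiplicative properties (e.g.\ with twisted or intersection cohomology coefficients) needed later in the paper.
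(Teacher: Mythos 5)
Your Steps 2 and 3 are exactly the kind of formal bookkeeping that Appendix A is designed to handle, but Step 1 --- the mathematical crux --- is asserted rather than proved, and it is misattributed. You claim that Saito's theory provides a natural Künneth isomorphism
$$\mathrm{gr}^F_p DR(\MC\boxtimes\MC')\simeq \bigoplus_{q+q'=p} \mathrm{gr}^F_q DR(\MC)\boxtimes \mathrm{gr}^F_{q'} DR(\MC')$$
in $D^b_{\rm coh}(X\times X')$ for arbitrary mixed Hodge module complexes, and that this is ``precisely the ingredient underlying'' the non-equivariant multiplicativity in \cite{Sch3}[Sect.5]. It is not: \cite{Sch3}[Cor.5.10] does not establish such an isomorphism for general $\MC$, $\MC'$, and neither does the present paper. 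To make Step 1 rigorous you would have to go into Saito's construction of the external product and verify that the underlying bifiltered $D$-module of $\MC\boxtimes\MC'$ is the external product with the convolution Hodge filtration, that this persists for \emph{singular} $X$, $X'$ (where $\mathrm{gr}^F_pDR$ is defined via local embeddings into smooth varieties), and that the resulting identification is natural enough to carry the $(G\times G')$-action. None of that is supplied, so the argument has a genuine gap at its central step.

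The paper's proof avoids this entirely by reduction to generators: using equivariant resolution of singularities (as in \cite{Bit}[Sect.7]), $K_0(\mh^G(X))$ is generated by classes $f_*[j_*\LL^H]$ with $f:M\to X$ a proper $G$-morphism, $M$ smooth, and $\LL$ a $G$-equivariant good variation on the complement of a $G$-invariant simple normal crossing divisor $D\subset M$. On such generators, formula (\ref{log}) gives the explicit expression $\mc^G([j_*\LL^H])=\chi_y(\bar{\VV})\otimes\lambda_y(\Omega^1_M(\log D))$, and multiplicativity becomes a concrete computation with Deligne extensions and logarithmic forms (via $\Omega^1_{M\times M'}(\log(D\times M'\cup M\times D'))\simeq pr_1^*\Omega^1_M(\log D)\oplus pr_2^*\Omega^1_{M'}(\log D')$), after which one passes to general classes using the fact that both $\boxtimes$ and $\mc^G$ commute with proper push-forward (the Künneth compatibility (\ref{Kue1}) together with covariance). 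If you adopt this route, your equivariance concerns from Steps 2--3 reappear in a tamer form --- one only needs the generators, the resolution, and formula (\ref{log}) to be compatible with the group actions --- and that is exactly what the Appendix A framework delivers.
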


The proof of the above theorem is a formal adaptation of that of \cite{Sch3}[Cor.5.10], by using (\ref{log}) together with the observation that for a $G$-variety $X$, the Grothendieck group $K_0(\mh^G(X))$ is generated by the classes $f_*[j_*\LL^H]$, where $f:M \to X$ is a proper $G$-morphism of quasi-projective $G$-varieties with $M$ smooth, and $\LL$ a $G$-equivariant ``good'' variation of mixed Hodge structures defined on the complement of a $G$-invariant simple normal crossing divisor in $M$ (as above). For this, one uses equivariant resolution of singularities, as in \cite{Bit}[Sect.7].\\

We end this section with a discussion on the relation between the equivariant motivic Chern class and the non-equivariant motivic Chern class for spaces with trivial $G$-action. This will be needed later on in Sect.\ref{comporb}, for computing characteristic classes of global quotients. 
Let $G$ act trivially on the quasi-projective variety $X$. Then one can 
consider the projector $$\PC_G:=\frac{1}{\vert G \vert} \sum_{g \in G} \psi_g=(-)^G$$ acting on the categories $D^{b,G}\mh(X)$ and $D^{b,G}_{\rm coh}(X)$, for $\psi_g$ the isomorphism induced from the action of $g \in G$. Here we use the fact that the underlying categories $D^{b}\mh(X)$ and $D^{b}_{\rm coh}(X)$ are $\Q$-linear additive categories which are Karoubian by \cite{BS,LC} (i.e., any projector has a kernel, see also \cite{MS09}).  
Since $\PC_G$ is exact, we obtain induced functors on the Grothendieck groups:
$$[-]^G:K_0 (\mh^G(X)) \to K_0 (\mh(X))$$ and
$$[-]^G:K_0({\co}^G(X)) \to K_0({\co}(X)).$$

We now have the following result
\begin{prop}\label{L1}
Let $X$ be a complex quasi-projective $G$-variety, with a trivial action of the finite group $G$. Then the following diagram commutes:
\begin{equation}\label{LL1}
\begin{CD}
K_0(\mh^G(X)) @> \mc^G >> K_0(\co^G(X)) \otimes \Z[y^{\pm 1}]\\
@V [-]^G VV @VV [-]^G  V\\
K_0(\mh(X)) @> \mc >> K_0(\co(X)) \otimes \Z[y^{\pm 1}]\
\end{CD}
\end{equation}
where $\mc:K_0 (\mh(X)) \to K_0({\rm Coh}(X)) \otimes \Z[y^{\pm 1}]$ is the  Brasselet-Sch\"urmann-Yokura transformation (cf. \cite{BSY}).
\end{prop}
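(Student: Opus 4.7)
The plan is to lift the identity $\mc \circ [-]^G = [-]^G \circ \mc^G$ from Grothendieck groups to the underlying categories, by checking that Saito's functors $\mathrm{gr}^F_{-p} DR$ intertwine the $G$-invariants projector $\PC_G := \tfrac{1}{|G|}\sum_{g \in G}\psi_g$ whenever $G$ acts trivially on $X$. The first step is to use the triviality of the action: each pushforward $g_*$ is literally the identity functor on $\mh(X)$ and on $\co(X)$, so a $G$-equivariant object is just a non-equivariant object $\MC$ equipped with a genuine $G$-action by automorphisms $\{\psi_g\}_{g\in G}$, and $\PC_G$ is an actual endomorphism of $\MC$. Karoubianness of these categories, already invoked before the statement of the proposition, then guarantees that $\MC^G := \mathrm{im}(\PC_G)$ lies in the non-equivariant category and that $(-)^G$ is exact.

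Next, I would exploit the fact (recalled in Section \ref{equivc}) that $\mathrm{gr}^F_{-p} DR$ commutes with $g_*$ for every $g \in G$. Under the trivial-action assumption this precisely says that the $G$-action on $\mathrm{gr}^F_{-p} DR(\MC)$ built into the definition of $\mathrm{gr}^F_{-p} DR^G$ coincides with $\{\mathrm{gr}^F_{-p} DR(\psi_g)\}$. Additivity of the functor then gives
$\mathrm{gr}^F_{-p} DR(\PC_G^{\MC}) = \PC_G^{\mathrm{gr}^F_{-p} DR(\MC)}$, and taking images of these equal projectors yields a canonical isomorphism
$\mathrm{gr}^F_{-p} DR(\MC^G) \simeq \bigl(\mathrm{gr}^F_{-p} DR(\MC)\bigr)^G$ in $D^b_{\rm coh}(X)$. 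The analogous averaging argument applied to the additive cohomology functors $H^i$ then gives $H^i(\mathrm{gr}^F_{-p} DR(\MC^G)) \simeq H^i(\mathrm{gr}^F_{-p} DR(\MC))^G$ in $\co(X)$, hence the equality of their classes in $K_0(\co(X))$.

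Substituting this identification into the defining formula (\ref{d}) and applying $[-]^G$ termwise yields
\begin{equation*}
[-]^G \bigl(\mc^G([\MC])\bigr) \,=\, \sum_{i,p}(-1)^i\bigl[H^i(\mathrm{gr}^F_{-p} DR(\MC^G))\bigr](-y)^p \,=\, \mc([\MC]^G),
\end{equation*}
which is the commutativity of (\ref{LL1}). The main (essentially bookkeeping) obstacle lies in the second step: verifying that the $G$-equivariant structure placed on $\mathrm{gr}^F_{-p} DR^G(\MC)$ via the natural isomorphism $\mathrm{gr}^F_{-p} DR \circ g_* \simeq g_* \circ \mathrm{gr}^F_{-p} DR$ really reduces, under the trivial-action hypothesis, to the obvious action $\{\mathrm{gr}^F_{-p} DR(\psi_g)\}$. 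This amounts to unwinding the Appendix's formalism of weak equivariant derived categories, but presents no conceptual difficulty.
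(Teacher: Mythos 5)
Your proposal is correct and follows essentially the same route as the paper: the paper's proof likewise rests on the observation that ${\rm gr}^F_p DR^G$ is additive and hence commutes with the averaging projector $(-)^G$, so that $\left[\mc^G([\MC])\right]^G=\mc^G([\MC]^G)=\mc([\MC]^G)$, the last equality holding because $G$ acts trivially on $\MC^G$. Your more detailed unwinding of why the equivariant structure on ${\rm gr}^F_{-p}DR^G(\MC)$ reduces to $\{{\rm gr}^F_{-p}DR(\psi_g)\}$ under the trivial-action hypothesis is exactly the bookkeeping the paper leaves implicit.
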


\begin{proof} Since ${\rm gr}^F_p DR^G:D^{b,G}\mh(X) \to D^{b,G}_{\rm coh}(X)$ is an additive functor, it commutes with the projectors $(-)^G$. Therefore, the equivariant  motivic Chern
class transformation $\mc^G$ also  commutes with the projectors $[-]^G$. 
Let  $\MC \in D^{b,G}\mh(X)$ be given. The following sequence of identities yields the desired result:
\begin{equation} \left[  \mc^G([\MC]) \right]^G 
= \mc^G\left([\MC]^G\right)
= \mc([\MC]^G),
\end{equation}
where the last equality  follows since $G$ acts trivially on $\MC^G
\in D^{b}\mh(X)$.
\end{proof}

\section{Hodge-theoretic Atiyah-Singer classes of singular varieties}
\subsection{Construction. Properties} The Atiyah-Singer class ${T_y}_*(X;g)$ which will be defined in this section is an equivariant generalization of the motivic Hirzebruch class ${T_y}_*(X)$ defined by Brasselet-Sch\"urmann-Yokura \cite{BSY}, in the sense that $${T_y}_*(X)={T_y}_*(X;id) \in H_{ev}^{BM}(X;\C)[y].$$ 
In fact, the definition we give here for ${T_y}_*(X;g)$ follows closely that of \cite{BSY}.

\begin{defn} The \underbar {un-normalized Atiyah-Singer class transformation} ${\widetilde{\mt_y}}_*(g)$ is defined by composing the transformation $\mc^G$ of (\ref{d}) with  the Lefschetz-Riemann-Roch transformation $td_*(g)(-)$, i.e.,
$$\widetilde{{\mt_y}_*}(g):=td_*(g) \circ \mc^G : K_0 (\mh^G(X)) \to H_{ev}^{BM}(X^g) \otimes \C[y^{\pm 1}].$$
The \underbar {normalized Atiyah-Singer class transformation} ${\mt_y}_*(g)$ is then defined as 
\begin{equation}
{\mt_y}_*(g):=\Psi_{(1+y)}  \circ \widetilde{{\mt_y}_*}(g) :K_0 (\mh^G(X)) \to H_{ev}^{BM}(X^g) \otimes \C[y^{\pm 1},(1+y)^{-1}],
\end{equation}
with the homological Adams operation $$\Psi_{(1+y)}:H_{ev}^{BM}(X^g) \otimes \C[y^{\pm 1}] \to H_{ev}^{BM}(X^g)  \otimes \C[y^{\pm 1}, (1+y)^{-1}]$$  given by multiplication with $(1+y)^{-k}$ on $H_{2k}^{BM}(X^g) \otimes \C[y^{\pm 1}]$.
\end{defn}

\begin{remark}\label{param}\rm
Note that in the above definition  we need to invert the parameter $(1+y)$ to get the right normalization condition for ${T_y}_*(X;g)$ in Prop.\ref{normal} in case $X$ is smooth. As we will see later on (see Cor.\ref{indep}), the transformation ${\mt_y}_*(g)$ factorizes through $H_{ev}^{BM}(X^g) \otimes \C[y^{\pm 1}]$  in the case when $g$ acts trivially on $X$ (as in \cite{Sch3} in the non-equivariant context). But if the action of $g$ is non-trivial, this need not be the case. A simple example is given by 
${\mt_y}_*([j_*\Q^H_U])$
for a finite order automorphism  $g$ of a quasi-projective manifold $X$ with fixed point set 
$X^g$ a smooth hypersurface of positive dimension and $j:U:=X\setminus X^g\to X$ the inclusion of the open complement. Nevertheless, the negative powers of $(1+y)$ also disappear in many other interesting cases , e.g. in  the motivic context.
\end{remark}

Note that the transformations ${\mt_y}_*(g)$ and $\widetilde{{\mt_y}_*}(g)$ commute with proper push-downs and restrictions to open subsets. Moreover, for a subgroup $H$ of $G$ with $g \in H$, these transformations commute with the obvious restriction functors ${\rm Res}^G_H$. 
Also, by construction, ${\mt_y}_*(id)$ is the complexified version of the transformation defined in \cite{BSY,Sch3}. Finally, (\ref{multL}) and (\ref{multC}) yield the following multiplicativity property:
\begin{cor}
 Let $X$ and $X'$ be algebraic quasi-projective $G$- and $G'$-varieties, respectively. Then for $g\in G$ and $g'\in G'$ one has a commutative diagram:
\begin{equation}\label{multAS}
{\small \begin{CD}
 K_0(\mh^G(X)) \times K_0(\mh^{G'}(X')) @> \widetilde{{\mt_y}_*}(g) \times \widetilde{{\mt_y}_*}(g') >>
\left( H_{ev}^{BM}(X^g) \times H_{ev}^{BM}(X'^{g'}) \right) \otimes \C[y^{\pm 1}] \\
@V \boxtimes VV @VV \boxtimes  V\\
K_0(\mh^{G\times G'}(X\times X')) @> \widetilde{{\mt_y}_*}((g,g'))>> H_{ev}^{BM}(X^g \times X'^{g'}) \otimes \C[y^{\pm 1}].
 \end{CD}}
\end{equation}
And similarly for the transformation ${\mt_y}_*(g)$.
\end{cor}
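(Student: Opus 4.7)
The plan is to exploit the factorization $\widetilde{{\mt_y}_*}(g) = td_*(g)\circ \mc^G$ together with the already-established multiplicativity of each factor under exterior products. Since a point $(x,x')$ is fixed by $(g,g')$ iff $gx=x$ and $g'x'=x'$, we have the identification $(X\times X')^{(g,g')} = X^g\times X'^{g'}$, so that $\widetilde{{\mt_y}_*}((g,g'))$ factors along the same two-step pattern, and the square in question can be written as a vertical stacking of two commutative squares.

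Concretely, given $[\MC]\in K_0(\mh^G(X))$ and $[\MC']\in K_0(\mh^{G'}(X'))$, I would first apply the multiplicativity diagram (\ref{multC}) for the equivariant motivic Chern class transformation to obtain
\begin{equation*}
\mc^{G\times G'}([\MC]\boxtimes [\MC']) = \mc^G([\MC]) \boxtimes \mc^{G'}([\MC'])
\end{equation*}
in $\bigl(K_0(\co^G(X)) \boxtimes K_0(\co^{G'}(X'))\bigr) \otimes \Z[y^{\pm 1}]$. Then, after extending the Lefschetz-Riemann-Roch transformation $\Z[y^{\pm 1}]$-linearly, I would invoke the exterior product property (\ref{multL}) of Baum-Fulton-Quart/Moonen to the right-hand side, giving
\begin{equation*}
td_*((g,g'))\bigl(\mc^G([\MC]) \boxtimes \mc^{G'}([\MC'])\bigr) = td_*(g)(\mc^G([\MC])) \boxtimes td_*(g')(\mc^{G'}([\MC'])).
\end{equation*}
The right-hand side equals $\widetilde{{\mt_y}_*}(g)([\MC]) \boxtimes \widetilde{{\mt_y}_*}(g')([\MC'])$ by definition, establishing the un-normalized case.

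For the normalized transformation ${\mt_y}_*(g) = \Psi_{(1+y)}\circ \widetilde{{\mt_y}_*}(g)$, I would additionally check that the homological Adams operation is compatible with exterior products. Since the cross product sends $H^{BM}_{2k}(X^g)\times H^{BM}_{2l}(X'^{g'})$ into $H^{BM}_{2(k+l)}(X^g\times X'^{g'})$, the identity $(1+y)^{-(k+l)} = (1+y)^{-k}\cdot (1+y)^{-l}$ immediately yields $\Psi_{(1+y)}(\alpha\boxtimes\beta) = \Psi_{(1+y)}(\alpha)\boxtimes\Psi_{(1+y)}(\beta)$, so composing with the un-normalized case gives the corresponding statement for ${\mt_y}_*(g)$. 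There is no substantive obstacle in the argument, since the nontrivial input -- the multiplicativity of $\mc^G$ and of $td_*(g)$ -- has already been established; one need only carefully track the $\Z[y^{\pm 1}]$-linear extensions, the identification of fixed-point sets of a product action, and the degree-additivity of the exterior product pairing underlying the Adams operation compatibility.
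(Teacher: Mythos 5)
Your proposal is correct and is essentially the paper's own argument: the authors obtain the corollary precisely by combining the exterior product property (\ref{multL}) of the Lefschetz--Riemann--Roch transformation with the multiplicativity (\ref{multC}) of $\mc^G$, using the identification $(X\times X')^{(g,g')}=X^g\times X'^{g'}$. Your additional explicit check that the Adams operation $\Psi_{(1+y)}$ is multiplicative for cross products (via degree-additivity) correctly supplies the step needed for the normalized version, which the paper leaves implicit.
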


Distinguished choices of elements in $K_0(\mh^G(X))$ yield the following characteristic homology classes: 

\begin{defn} Let $X$ be a quasi-projective variety with an algebraic action by a finite group $G$ of automorphisms. Then for each $g \in G$ we define:
\begin{enumerate}
 \item[(a)] The \underbar {(homology) Atiyah-Singer class} of $X$ is given  by:
\begin{equation}
 {T_y}_*(X;g):={T_y}_*(g)([id_X])={\mt_y}_*(g)([\Q^H_X]),
\end{equation}
for $\Q^H_X$ the constant Hodge sheaf (with its induced $G$-action as a mixed Hodge module complex).
\item[(b)] If $X$ is a manifold of pure dimension $n$ and $\LL$ a $G$-equivariant ``good'' variation of mixed Hodge structures, we define \underbar{twisted Atiyah-Singer classes} $ {T_y}_*(X,\LL;g)$ by:
\begin{equation}
 {T_y}_*(X,\LL;g):={\mt_y}_*(g)([\LL^H]),
\end{equation}
for $\LL^H$ the corresponding (shifted) smooth $G$-equivariant mixed Hodge module.
\item[(c)] Assume $X$ is pure $n$-dimensional, with $\LL$ a $G$-equivariant ``good'' variation of mixed Hodge structures on a smooth Zariski-open dense $G$-invariant subset of $X$. To the corresponding $G$-equivariant intersection homology mixed Hodge modules $IC_X^H$ resp. $IC_X^H(\LL)$ we assign the following classes:
\begin{equation}\begin{split}
{IT_y}_*(X;g)&:={\mt_y}_*(g)([IC_X^H[-n]]) \: \text{resp.} \: \\
{IT_y}_*(X,\LL;g)&:={\mt_y}_*(g)([IC_X^H(\LL)[-n]]). \end{split}
\end{equation}
\end{enumerate}
The classes obtained by using the transformation  $\widetilde{{\mt_y}_*}(g)$  will be denoted ${\widetilde{T_y}}_*(X;g)$, etc.
\end{defn}

And again, for $g=id$ the identity element of $G$, the above classes are just complexified versions of the corresponding Hirzebruch characteristic classes from \cite{BSY,CMS,MS,Sch3}. Important special properties of these classes are described as follows.

\begin{prop}\label{normal} {\rm (Normalization)} \newline If $X$ is a quasi-projective manifold acted upon by a finite group $G$ of algebraic automorphisms, the following property holds for each $g \in G$:
\begin{equation} \widetilde{{T_y}}_*(X;g)=\widetilde{T}^*_y(X;g) \cap [X^g] \ \ \ \text{and} \ \ \  {T_y}_*(X;g)= T^*_y(X;g) \cap [X^g]. \end{equation}
\end{prop}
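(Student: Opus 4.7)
\textbf{Proof plan for Proposition \ref{normal}.}

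My strategy is to chase the definition. By construction, $\widetilde{T_y}_*(X;g) = td_*(g) \circ \mc^G([\Q_X^H])$, so the first task is to evaluate $\mc^G([\Q_X^H])$ on the smooth $G$-variety $X$. Theorem \ref{module} (normalization of the equivariant motivic Chern class transformation, equation (\ref{normalc})) gives us immediately
\[
\mc^G([\Q_X^H]) = \lambda_y(T^*_X) \in K^0_G(X) \otimes \Z[y].
\]
I can then apply the \emph{module} (in fact, \emph{normalization}) property (\ref{l0}) of the Lefschetz-Riemann-Roch transformation on the smooth manifold $X$, obtaining
\[
\widetilde{T_y}_*(X;g) \;=\; ch\bigl(\lambda_y(T^*_X)\big|_{X^g}\bigr)(g) \cdot td^*(X;g) \,\cap\, [X^g].
\]
Thus the proposition reduces to the purely cohomological identity $ch(\lambda_y(T^*_X)|_{X^g})(g) \cdot td^*(X;g) = \widetilde{T}^*_y(X;g)$ in $H^{ev}(X^g)\otimes \C[y]$.

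The next step is a splitting-principle computation. Use the $g$-equivariant decomposition $T_X|_{X^g} = T_{X^g} \oplus \bigoplus_{0<\theta<2\pi} N^g_\theta$, on which $g$ acts trivially on $T_{X^g}$ and by $e^{i\theta}$ on $N^g_\theta$. Dualizing, $g$ acts by $e^{-i\theta}$ on $(N^g_\theta)^*$. By multiplicativity of $\lambda_y$ and of the equivariant Chern character $ch(-)(g)$,
\[
ch\bigl(\lambda_y(T^*_X)|_{X^g}\bigr)(g) \cdot td^*(X;g) = \Bigl[ch(\lambda_y(T^*_{X^g})) \cdot td(X^g)\Bigr] \cdot \prod_{0<\theta<2\pi} \Bigl[ch(\lambda_y((N^g_\theta)^*))(g) \cdot \U_\theta(N^g_\theta)\Bigr].
\]
Writing the Chern roots of $T_{X^g}$ as $\{\alpha_i\}$ and those of $N^g_\theta$ as $\{\alpha^\theta_j\}$, the first bracket equals $\prod_i \frac{\alpha_i(1+ye^{-\alpha_i})}{1-e^{-\alpha_i}} = \widetilde{T}_y(X^g)$ (this is just the Hirzebruch-Riemann-Roch computation matching the power series $\tilde{f}_y$ of (v)). For each $\theta$, since $g$ acts by the scalar $e^{-i\theta}$ on the dual of the line-eigenbundle with Chern root $\alpha^\theta_j$, the equivariant Chern character yields $ch(\lambda_y((N^g_\theta)^*))(g) = \prod_j (1 + y\,e^{-i\theta - \alpha^\theta_j})$; combining with $\U_\theta(N^g_\theta) = \prod_j (1 - e^{-i\theta-\alpha^\theta_j})^{-1}$ gives precisely $\widetilde{T}^\theta_y(N^g_\theta)$ from (viii). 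Multiplying all factors reproduces $\widetilde{T}^*_y(X;g)$, which completes the un-normalized case.

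The normalized case is obtained from the un-normalized one by applying the Adams operation $\Psi_{(1+y)}$. From the defining power series $\tilde{f}_y$ and $f_y$ (resp.\ $\tilde{f}^\theta_y$ and $f^\theta_y$) one checks by substitution $\alpha \mapsto \alpha(1+y)$ that a rank-$r$ bundle $E$ satisfies $\widetilde{T}_y(E)_j = (1+y)^{r-j}\,T_y(E)_j$ in cohomological degree $2j$, while the twisted classes satisfy $\widetilde{T}^\theta_y(E)_j = (1+y)^{-j}\,T^\theta_y(E)_j$. Combining these via multiplicativity over the decomposition $T_X|_{X^g}=T_{X^g}\oplus\bigoplus_\theta N^g_\theta$ of total rank $n$ (and noting that $n - \sum_\theta \mathrm{rk}(N^g_\theta) = n_g := \dim X^g$), a brief bookkeeping gives
\[
\widetilde{T}^*_y(X;g)_j \;=\; (1+y)^{n_g - j}\, T^*_y(X;g)_j .
\]
Capping with $[X^g]\in H^{BM}_{2n_g}(X^g)$ lands the degree $2j$ cohomological piece in $H^{BM}_{2(n_g-j)}(X^g)$, so applying $\Psi_{(1+y)}$ (which acts as $(1+y)^{-(n_g-j)}$ on $H^{BM}_{2(n_g-j)}$) converts $\widetilde{T}^*_y(X;g)\cap [X^g]$ into $T^*_y(X;g)\cap [X^g]$. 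By definition ${T_y}_*(g) = \Psi_{(1+y)}\circ \widetilde{T_y}_*(g)$, this yields the second claim.

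The main conceptual step is really just invoking the module property of the Lefschetz-Riemann-Roch transformation together with Theorem \ref{module}; no new ideas are needed beyond that. The place where care is required is in the power-series bookkeeping of the last paragraph, especially in keeping straight the opposite scaling behaviours of $\widetilde{T}_y$ versus $T_y$ on one hand, and $\widetilde{T}^\theta_y$ versus $T^\theta_y$ on the other, so that the net $(1+y)^{n_g-j}$ discrepancy is exactly cancelled by the Adams operation.
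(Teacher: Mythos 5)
Your proof is correct and follows essentially the same route as the paper: the paper deduces Proposition \ref{normal} as the special case $\LL=\Q_X$ of Theorem \ref{thm1}, whose proof is exactly your computation (Theorem \ref{module} plus the Lefschetz--Riemann--Roch normalization, the eigenbundle splitting of $T_X|_{X^g}$, and the power-series identifications). Your final paragraph merely makes explicit the $(1+y)$-bookkeeping that the paper dismisses as ``an easy re-writing,'' and that bookkeeping is accurate.
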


\begin{proof} This is just a particular case of Theorem \ref{thm1} below, see Remark \ref{n}.
\end{proof}

\begin{remark}\label{normal2}\rm
For a quasi-projective manifold $X$ as in Prop.\ref{normal} we get by Remark \ref{23} the identifications:
$${T_{-1}}_*(X;g)=c^*(X^g) \cap [X^g]=c_*(1_{X^g})=c_*(g)([\Q_X]),$$
with  $\Q_X=rat (\Q^H_X)$ the constant $G$-equivariant sheaf on $X$ so that the trace of $g$ acting on stalks over $X^g$ equals the constant constructible function $1_{X^g}$. Similarly, we have:
$${T_{0}}_*(X;g)=td^*(X^g) \cap [X^g]=td_*(X^g)=td_*(g)([\OO_X]).$$
By functoriality of all of these transformations, this implies the commutativity of the diagram (\ref{comm}) from the Introduction, since by equivariant resolution of singularities, $K^G_0(var/X)$ is generated by classes $[f:M \to X]$ with $M$ a quasi-projective $G$-manifold and $f$ a proper $G$-morphism. By (\ref{normalc}), this also implies the factorizations used in (\ref{comm}):
$$\mc^G\circ \chi^G_{\rm Hdg}:K_0^G(var/X) \to K_0(\co^G(X))[y]$$
and 
$${T_y}_*(g):K_0^G(var/X) \to H^{BM}_{ev}(X^g;\C)[y],$$ where negative powers of $y$ and $(1+y)$ do not appear at all.
\end{remark}

Over a point space, both transformations ${\mt_y}_*(g)$ and $\widetilde{{\mt_y}_*}(g)$  coincide with the
equivariant $\chi_y(g)$-genus ring homomorphism $\chi_y(g):K^G_0(\ms^p) \to
\C[y,y^{-1}]$, defined on the Grothendieck group of $G$-equivariant (graded) polarizable mixed Hodge structures by 
\begin{equation} \chi_y(g)([H]):=\sum_p {\rm trace}(g \vert  Gr^p_F(H \otimes \C)) \cdot (-y)^p,
\end{equation}
for $F^{\centerdot}$ the Hodge filtration of $H \in {G-}\ms^p$. Indeed, using the identification $\mh^G(pt) \simeq {G-}\ms^p$ of $G$-equivariant mixed Hodge modules over a point with the $G$-equivariant (graded) polarizable mixed Hodge structures, we obtain for $[H] \in K^G_0(\ms^p)$:
\begin{eqnarray*} {\mt_y}_*(g)\left([H]\right) &=& {\mt_y}_*(g)\left([H]\right)\\ &=& \sum_p td_0(g)\left( [{\rm Gr}^p_F H] \right) \cdot (-y)^p\\ &=& \sum_p {\rm trace}(g \vert {\rm Gr}^p_F H) \cdot (-y)^p,
\end{eqnarray*}
where the last equality follows from the definition of the Lefschetz-Riemann-Roch transformation (compare with (\ref{l2})). In particular, for $X$ projective, by pushing-down under the constant map $k:X \to pt$, we get the following degree formula: 
\begin{prop}\label{degree} {\rm (Degree)} \newline If $X$ is a (possibly singular) complex projective variety acted upon by a finite group $G$ of algebraic automorphisms, then for any $g \in G$ we have:
\begin{equation} \chi_y(X;g)=\int_{[X^g]}\widetilde{{T_y}}_*(X;g)=\int_{[X^g]}{T_y}_*(X;g). \end{equation}
\end{prop}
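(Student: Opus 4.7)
The plan is to reduce to the point case by pushing forward along the constant map $k : X \to \text{pt}$, which is proper since $X$ is projective, and then invoke the explicit formula for the Atiyah-Singer transformation over a point given just before the proposition.

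First I would apply proper push-down functoriality of $\mt_y{}_*(g)$ and $\widetilde{\mt_y}_*(g)$ (recorded earlier in this section) to the constant map $k : X \to \text{pt}$, noting that $k^g : X^g \to \text{pt}$ is just the constant map on the fixed-point set and that push-forward under it is precisely the integration $\int_{[X^g]}(-)$. This gives
\begin{equation*}
\int_{[X^g]} T_y{}_*(X;g) \;=\; k^g_*\,\mt_y{}_*(g)([\Q^H_X]) \;=\; \mt_y{}_*(g)\bigl(k_*[\Q^H_X]\bigr),
\end{equation*}
and similarly for the un-normalized version. Here $k_*[\Q^H_X] \in K_0(\mh^G(\text{pt}))$ is, under the identification $\mh^G(\text{pt}) \simeq G\text{-}\ms^p$, nothing but $\sum_i (-1)^i [H^i(X;\Q)]$ with Deligne's canonical mixed Hodge structure and its induced $G$-action; the compatibility of the $G$-action on the mixed Hodge module $\Q^H_X$ with that on Deligne's MHS is the key (but standard) point here.

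Next I would apply the point-space formula established in the discussion immediately preceding the proposition, namely that for $[H] \in K^G_0(\ms^p)$,
\begin{equation*}
\mt_y{}_*(g)([H]) \;=\; \sum_p \operatorname{trace}\bigl(g\,\big|\,\operatorname{Gr}^p_F(H\otimes \C)\bigr)\cdot (-y)^p,
\end{equation*}
which follows by unwinding $\widetilde{\mt_y}_*(g) = td_*(g)\circ \mc^G$ over a point and using the degree property $(\ref{l2})$ of the Lefschetz-Riemann-Roch transformation. Plugging in $k_*[\Q^H_X]$ and summing over $i$ with signs yields precisely $\chi_y(X;g)$ by the definition of the latter. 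For the un-normalized version the same computation applies: over a point, $H^{BM}_{ev}(\text{pt})$ is concentrated in degree zero, so the homological Adams operation $\Psi_{(1+y)}$ acts as the identity, and hence $\widetilde{\mt_y}_*(g)$ and $\mt_y{}_*(g)$ coincide on classes pushed to a point. This gives both equalities of the proposition simultaneously.

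There is no real obstacle here — the statement is essentially a formal consequence of proper push-down functoriality combined with the already-established point-space identification. The only item to be checked carefully is the said compatibility of the two $G$-actions (on $\Q^H_X$ as an equivariant mixed Hodge module versus on Deligne's MHS on $H^*(X;\Q)$), which follows from Saito's construction since the $G$-equivariant structure on $\Q^H_X$ is the tautological one coming from $g^*\Q_X = \Q_X$ upgraded to mixed Hodge modules.
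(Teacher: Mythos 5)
Your proposal is correct and follows essentially the same route as the paper: push down along the proper constant map $k:X\to \mathrm{pt}$, identify the image class in $K_0(\mh^G(\mathrm{pt}))\simeq K^G_0(\ms^p)$ with $\sum_i(-1)^i[H^i(X;\Q)]$, and apply the point-space formula $\mt_y{}_*(g)([H])=\sum_p\operatorname{trace}(g\,|\,\operatorname{Gr}^p_F(H\otimes\C))(-y)^p$ established just before the proposition, noting that $\Psi_{(1+y)}$ is the identity over a point. The one compatibility you flag is exactly the one the paper also invokes (that Deligne's and M.~Saito's mixed Hodge structures on $H^*(X;\Q)$ coincide, citing Saito), so nothing is missing.
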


Here we use implicitely that the mixed Hodge structures of Deligne and resp. M. Saito on $H^*(X;\Q)$ coincide (see \cite{Sa1}). Similarly, the degree of ${IT_y}_*(X;g)$ calculates the corresponding invariant $I\chi_y(X;g)$ coming from the Hodge filtration on the intersection cohomology $IH^*(X;\Q)$ of $X$, as studied in \cite{eCMS}. 

\subsection{Monodromy contributions}\label{mon} We next discuss  equivariant generalizations of some of the results from \cite{CLMS,CLMS2,MS}. These results  are characteristic class versions of the formulae described in \cite{eCMS}, and should also be regarded as Atiyah-Singer type theorems ``with coefficients", similar to the module property (\ref{l1}). 

\begin{theorem}\label{thm1}
Let $X$ be a  complex quasi-projective manifold of pure-dimension $n$, and $\LL$ a ``good'' variation of mixed Hodge structures on $X$. Let $G$ be a finite group of algebraic automorphisms of $(X,\LL)$. Then for any $g \in G$ we have:
\begin{equation}\label{fourb}
\widetilde{{T_y}}_*(X,\LL;g)
=\left( ch(\chi_y(\VV) \vert_{X^g})(g) \cup {\widetilde{T}_y}^*(X;g) \right) \cap [X^g],\end{equation} 
or, in its normalized form, 
\begin{equation}\label{four}
{T_y}_*(X,\LL;g)
=\left( ch_{(1+y)}(\chi_y(\VV) \vert_{X^g})(g) \cup {T_y}^*(X;g) \right) \cap [X^g],\end{equation} 
where $\chi_y(\VV)$ is the (equivariant) 
$\chi_y$-characteristic of the associated complex algebraic vector bundle $\VV:=\LL \otimes_{\Q} \mathcal{O}_X$, with its induced Hodge filtration 
$\FC^{\centerdot}$. In particular
\begin{equation}\label{mon-c}
{T_y}_*(X,\LL;g)\in H^{BM}_{ev}(X^g) \otimes \C[y^{\pm 1}],
\quad \text{with} \quad {T_{-1}}_*(X,\LL;g)=c_*(g)([\LL])\:.
\end{equation}
\end{theorem}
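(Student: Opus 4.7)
By definition $\widetilde{T_y}_*(g) = td_*(g)\circ \mc^G$, so the proof amounts to computing $td_*(g)(\mc^G([\LL^H]))$ and recognizing the result. Theorem \ref{module} applied to the smooth $G$-manifold $X$ together with the $G$-equivariant good variation $\LL$ gives
\[
\mc^G([\LL^H]) = \chi_y(\VV)\otimes \lambda_y(T^*_X) \;\in\; K^0_G(X)[y^{\pm 1}].
\]
The module property (\ref{l1}) of the Lefschetz--Riemann--Roch transformation, linearly extended over $\Z[y^{\pm 1}]$, combined with the smooth-case normalization $td_*(X;g)=td^*(X;g)\cap [X^g]$ from (\ref{l0}), then converts this to
\[
\widetilde{T_y}_*(X,\LL;g) = \bigl(ch(\chi_y(\VV)|_{X^g})(g)\cup ch(\lambda_y(T^*_X)|_{X^g})(g)\cup td^*(X;g)\bigr)\cap [X^g].
\]
Proving (\ref{fourb}) is thus reduced to the purely cohomological identity $ch(\lambda_y(T^*_X)|_{X^g})(g)\cup td^*(X;g) = \widetilde{T}_y^*(X;g)$.

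I would verify this identity via the $G$-equivariant splitting $T_X|_{X^g}=T_{X^g}\oplus \bigoplus_{0<\theta<2\pi} N^g_\theta$, with $g$ acting trivially on $T_{X^g}$ and by the scalar $e^{i\theta}$ on $N^g_\theta$. Writing $\Lambda^p(N^g_\theta)^\vee$ in $K^0_G(X^g)$ as the underlying bundle tensored with the character $\chi_{-p\theta}$, and expanding via Chern roots $\{\alpha_i\}$ of $T_{X^g}$ and $\{\gamma^\theta_j\}$ of $N^g_\theta$, one obtains
\[
ch(\lambda_y(T^*_X)|_{X^g})(g) = \prod_i(1+y\,e^{-\alpha_i})\cdot \prod_\theta\prod_j(1+y\,e^{-\gamma^\theta_j-i\theta}).
\]
Multiplying by the Chern-root formulas $td(X^g)=\prod_i \alpha_i/(1-e^{-\alpha_i})$ and $\U_\theta(N^g_\theta)=\prod_j(1-e^{-\gamma^\theta_j-i\theta})^{-1}$ of section \ref{not} telescopes the result into $\widetilde{T}_y(X^g)\cdot \prod_\theta \widetilde{T}_y^\theta(N^g_\theta)=\widetilde{T}_y^*(X;g)$, which is (\ref{fourb}).

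The normalized statement (\ref{four}) then follows by applying the homological Adams operation $\Psi_{1+y}$ to both sides of (\ref{fourb}) and using: (i) $ch_{(1+y)}(-)(g)=\psi^{1+y}(ch(-)(g))$ by construction; (ii) the power-series identity $f_y(\alpha)=(1+y)^{-1}\tilde{f}_y((1+y)\alpha)$, together with the direct substitution $\alpha\mapsto (1+y)\alpha$ converting $\tilde{f}_y^\theta$ into $f_y^\theta$. Together these give $\Psi_{1+y}(\widetilde{T}_y^*(X;g)\cap [X^g])=T_y^*(X;g)\cap [X^g]$ once the single $(1+y)^{-\dim X^g}$ coming from the tangential factor is matched against the dimension of the fundamental class. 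Finally, for (\ref{mon-c}) one notes that $T_y^*(X;g)$ and $ch_{(1+y)}(\chi_y(\VV)|_{X^g})(g)$ visibly lie in $H^{ev}(X^g)\otimes \C[y^{\pm 1}]$ with no poles at $y=-1$; specializing the normalized formula there, $\chi_{-1}(\VV)=[\VV]$, $ch_0(\VV)(g)$ becomes the locally constant function $\mathrm{tr}_g(\VV|_{X^g})$, and $T^*_{-1}(X;g)\cap [X^g]=c_*(X^g)$ by (\ref{chern}), so the product equals $c_*(\mathrm{tr}_g([\LL])|_{X^g})=c_*(g)([\LL])$.

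The main obstacle is the Chern-root bookkeeping in the middle paragraph: one has to track the $G$-character $\chi_{-p\theta}$ on $\Lambda^p(N^g_\theta)^\vee$ simultaneously with the ordinary Chern roots $\gamma^\theta_j$ of $N^g_\theta$ in order to recover the precise twisted series $\tilde{f}_y^\theta(\alpha)=(1+ye^{-i\theta-\alpha})/(1-e^{-i\theta-\alpha})$. The degree-by-degree passage from (\ref{fourb}) to (\ref{four}) is similarly subtle, because $T_y^\theta$ and $\widetilde{T}_y^\theta$ are related by a pure rescaling of roots while $T_y$ and $\widetilde{T}_y$ differ by a rescaling \emph{plus} a normalizing power of $(1+y)$; the overall power of $(1+y)$ must be read off the real dimension of $X^g$ alone in order for $\Psi_{1+y}$ to match.
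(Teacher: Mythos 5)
Your proposal is correct and follows essentially the same route as the paper's proof: apply Theorem \ref{module}, use the smooth-case normalization (\ref{l0}) of the Lefschetz--Riemann--Roch transformation, split $T^*_X|_{X^g}$ into tangential and normal eigen-pieces, and track the characters $e^{-ip\theta}$ on $\Lambda^p(N^g_\theta)^*$ to assemble $\widetilde{T}_y^*(X;g)$. Your extra bookkeeping for the passage to the normalized form via $\Psi_{(1+y)}$ and for the $y=-1$ specialization is accurate and merely fills in what the paper dismisses as "an easy re-writing."
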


\begin{proof}
From Thm.\ref{module} we have that:
\begin{eqnarray*}
\mc^G([\LL^H]) = \chi_y(\VV) \otimes \lambda_y(T^*_X) \in K_0(\co^G(X)) \otimes
\Z[y,y^{-1}],
\end{eqnarray*} 
where $\lambda_y(T^*_X):=\sum_p [\Lambda^pT^*_X] \cdot y^p$ is the total
$\lambda$-class of $T^*_X$.

Since $X$ is an algebraic manifold, the equivariant Todd class transformation  $$td_*(g)(-):K_0(\co^G(X)) \to H^{BM}_{ev}(X^g;\C)$$ of the Lefschetz-Riemann-Roch theorem  is explicitly described by (see (\ref{l0})): 
\begin{equation}  td_*(g)(-)=ch(-|_{X^g})(g) \cdot td^*(X;g) \cap [X^g].
\end{equation}

By linearly extending the transformation $td_*(g)(-)$ over $\Z[y,y^{-1}]$, we can now write:
$$td_*(g)(\mc^G([\LL^H]))=ch(\chi_y(\VV)|_{X^g})(g) \cdot ch(\lambda_y(T^*_X)|_{X^g})(g)  \cdot \prod_{0<\theta <2\pi} \U_{\theta}(N^g_{\theta}) \cdot td(X^g) \cap [X^g].$$
On the other hand, by using the identity $$T^*_X \vert_{X^g}=T^*_{X^g} \oplus \sum_{0<\theta <2\pi}{N^g_{\theta}}^*,$$ the right-hand side of the above equality can be expanded as
\begin{equation}\label{eight} ch(\chi_y(\VV) \vert_{X^g})(g) \cdot ch (\lambda_y(T^*_{X^g}))(g) \cdot \prod_{0<\theta <2\pi} ch (\lambda_y({N^g_{\theta}}^*))(g) \cdot \prod_{0<\theta <2\pi} \U_{\theta}(N^g_{\theta}) \cdot td(X^g) \cap [X^g]. \end{equation}
We further note that $ch (\Lambda^p {N^g_{\theta}}^*)(g)=e^{-ip\theta} \cdot ch (\Lambda^p {N^g_{\theta}}^*)$, therefore $$ch (\lambda_y ({N^g_{\theta}}^*))(g)=\prod_j (1+ye^{-i\theta-\alpha_j}),$$ for $\{ \alpha_j \}$ the Chern roots of $N^g_{\theta}$.
Putting all of this into (\ref{eight}) yields (\ref{fourb}):
\begin{equation*}\label{nine} 
td_*(g)(\mc^G([\LL^H]))=ch(\chi_y(\VV) \vert_{X^g})(g) \cdot \widetilde{T}_y(X^g) \cdot \prod_{0<\theta <2\pi} \widetilde{T}_y^{\theta}(N^g_{\theta}) \cap [X^g].
\end{equation*}
The result as stated in (\ref{four}) is now obtained by an easy re-writing of this equation. (Similar considerations are carried out in detail, e.g., in \cite{MS}[Thm.4.1]). The final equality (\ref{mon-c}) follows from (\ref{chern}), together with
$$[\chi_{-1}(\VV)]=[\VV]\in K^0_G(X)$$
and the fact that the (locally constant) traces of $g$ on $\VV|_{X^g}$ and resp.  $\LL|_{X^g}$ agree.
\end{proof}

\begin{remark}\label{n}\rm By letting $\LL$ be the constant variation $\Q_X$ on $X$, formula (\ref{four}) specializes to the ``normalization axiom" of Proposition \ref{normal}. Indeed, the associated flat bundle is in this case (at K-theoretic level) just the structure sheaf $\mathcal{O}_X$ with its trivial Hodge filtration, i.e., ${\rm Gr}^{p}_{\mathcal{F}} \mathcal{O}_X=0$, for all $p \neq 0$.
\end{remark}

As an easy application of (\ref{log}) we also get the following result, which for simplicity we state only in the case when $g$ acts trivially on $X$ (as needed in Cor.\ref{indep} below):

\begin{prop}\label{open}
Let $X$ be a  complex quasi-projective $G$-manifold of pure-dimension $n$, with $D$ a $G$-invariant simple normal crossing divisor in $X$. 
Let $j:U:=X \setminus D \hookrightarrow X$ be the open inclusion. Consider a $G$-equivariant ``good'' variation $\LL$ of mixed Hodge structures on $U$, with  $\bar{\VV}$ the canonical Deligne extension of $\VV:=\LL \otimes \OO_U$, together with its induced ``Hodge'' filtration $\bar{\FC}^{\centerdot}$ by algebraic sub-bundles extending the ``Hodge'' filtration of $\VV$.  If $g$ acts trivially on $X$, then:
\begin{equation}\label{log2}
\widetilde{{{\mt}_y}_*}(g)([j_*\LL^H])=ch ( \chi_y(\bar{\VV}))(g)  \cdot  ch(\lambda_y(\Omega^1_X(\log D)) ) \cap td_*(X)
\end{equation} 
and
\begin{multline}\label{log3}
{{\mt}_y}_*(g)([j_*\LL^H])=ch_{(1+y)}( \chi_y(\bar{\VV}))(g) \cdot  ch_{(1+y)}( \lambda_y(\Omega^1_X(\log D)) ) \cap \Psi_{(1+y)}\left(td_*(X)\right).
\end{multline} 
\end{prop}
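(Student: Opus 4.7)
My plan is to read off both formulae directly from the description of $\mc^G([j_*\LL^H])$ in equation (\ref{log}) and the module property of the Lefschetz--Riemann--Roch transformation, after observing that the assumption that $g$ acts trivially on $X$ drastically simplifies the normal bundle contributions.

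First, I would apply (\ref{log}), which identifies
\[
\mc^G([j_*\LL^H]) = \chi_y(\bar\VV)\otimes \lambda_y(\Omega^1_X(\log D)) \in K_0(\co^G(X))\otimes \Z[y^{\pm 1}]
\]
as a product of two classes of $G$-equivariant algebraic vector bundles. By definition $\widetilde{{\mt_y}_*}(g)([j_*\LL^H]) = td_*(g)\bigl(\mc^G([j_*\LL^H])\bigr)$, so it suffices to evaluate $td_*(g)$ on this product. Here I would invoke the module property (\ref{l1}) together with the normalization (\ref{l0}): since $g$ acts trivially, $X^g = X$, the normal bundle decomposition is empty, and so
\[
td^*(X;g)= td(X)\cdot \prod_{0<\theta<2\pi} \U_\theta(N^g_\theta) = td(X),
\]
which gives $td_*(X;g) = td_*(X)$. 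Combining this with multiplicativity of the equivariant Chern character $ch(-)(g)$ on tensor products of $G$-equivariant bundles (now possible because both factors carry a $G$-action even though the base is acted upon trivially), one directly obtains
\[
td_*(g)\bigl(\chi_y(\bar\VV) \otimes \lambda_y(\Omega^1_X(\log D))\bigr) = ch(\chi_y(\bar\VV))(g)\cdot ch(\lambda_y(\Omega^1_X(\log D)))(g) \cap td_*(X),
\]
which is (\ref{log2}). Note that $g$ acts trivially on $\Omega^1_X(\log D)$ as a bundle over $X$ (since it acts trivially on $X$ and preserves $D$), so $ch(\lambda_y(\Omega^1_X(\log D)))(g)=ch(\lambda_y(\Omega^1_X(\log D)))$, matching the displayed form.

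For the normalized version (\ref{log3}), I would apply the homological Adams operation $\Psi_{(1+y)}$ to (\ref{log2}). The key compatibility is that for $\eta\in H^{2i}(X^g)$ and $h\in H^{BM}_{2k}(X^g)$,
\[
\Psi_{(1+y)}(\eta\cap h) = (1+y)^{-(k-i)}\,\eta\cap h = \bigl((1+y)^i \eta\bigr)\cap \bigl((1+y)^{-k}h\bigr) = \psi^{(1+y)}(\eta)\cap \Psi_{(1+y)}(h),
\]
where $\psi^{(1+y)}$ denotes the cohomological Adams operation multiplying degree $2i$ classes by $(1+y)^i$. By the defining formula $ch_{(1+y)}(\Xi):=\sum_i e^{(1+y)\alpha_i} = \psi^{(1+y)}(ch(\Xi))$, this Adams operation converts every factor $ch(-)(g)$ into $ch_{(1+y)}(-)(g)$ and $td_*(X)$ into $\Psi_{(1+y)}(td_*(X))$, yielding (\ref{log3}).

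The only genuinely delicate point is the compatibility of the Adams operation with the cap product displayed above; everything else is a transcription of (\ref{log}) through the module property (\ref{l1}). I do not expect any real obstacle, as the argument mirrors (in the equivariant setting, with $g$ acting trivially on the base) the corresponding non-equivariant computation given in \cite{Sch3}[Ex.\,5.8], with the equivariant Chern character $ch(-)(g)$ replacing the ordinary Chern character.
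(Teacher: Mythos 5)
Your proposal is correct and follows exactly the route the paper intends: the paper presents Proposition \ref{open} as ``an easy application of (\ref{log})'', namely feeding $\mc^G([j_*\LL^H])=\chi_y(\bar{\VV})\otimes\lambda_y(\Omega^1_X(\log D))$ into the module and normalization properties of $td_*(g)$ with $X^g=X$, and then applying $\Psi_{(1+y)}$. Your verification of the cap-product compatibility of the Adams operation and of the triviality of the $g$-action on $\Omega^1_X(\log D)$ fills in precisely the details the paper leaves implicit.
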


\begin{cor}\label{indep}  Let $X$ be a quasi-projective $G$-variety, with $g$ acting trivially  as before. Then the transformation ${\mt_y}_*(g)$ factorizes through $H_{ev}^{BM}(X) \otimes \C[y^{\pm 1}]$, i.e.,
$${\mt_y}_*(g) : K_0 (\mh^G(X)) \to H_{ev}^{BM}(X) \otimes \C[y^{\pm 1}] \subset H_{ev}^{BM}(X) \otimes \C[y^{\pm 1},(1+y)^{-1}].$$
Moreover, by substituting $y=-1$, we have the identification of transformations $${\mt_{-1}}_*(g)=c_*(g)\circ rat: K_0 (\mh^G(X)) \to H_{ev}^{BM}(X) \otimes \C.$$
\end{cor}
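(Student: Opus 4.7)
The strategy is to factor ${\mt_y}_*(g)$ through the (complexified) non-equivariant Hirzebruch class transformation ${\mt_y}_*$ of \cite{BSY,Sch3}, which is already known to land in $H^{BM}_{ev}(X)\otimes \Q[y^{\pm 1}]$. First, since both $\mc^G$ and $td_*(g)$ are compatible with restriction along $H:=\langle g \rangle\hookrightarrow G$, I reduce to the cyclic case $G=H$. Because $g$ acts trivially on $X$, the structure isomorphism $\psi_g\colon \MC\to g_*\MC=\MC$ of any $\MC\in \mh^H(X)$ is now an honest endomorphism satisfying $\psi_g^{|H|}=\mathrm{id}$, and the idempotents $P_\lambda:=\tfrac{1}{|H|}\sum_{i}\lambda^{-i}\psi_g^{\,i}$ (over $\lambda\in\mu_{|H|}$) exist in the Karoubian $\C$-linear category $\mh^H(X)\otimes_{\Q}\C$ --- the same Karoubianness (\cite{BS,LC,MS09}) invoked in the proof of Proposition \ref{L1}. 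These projectors yield an eigenspace decomposition $[\MC]=\sum_\lambda [\MC_\lambda]\in K_0(\mh^H(X))\otimes_{\Z}\C$, and likewise on $K_0(\co^H(X))\otimes_{\Z}\C$. I then define a trace map
\[
\mathrm{tr}_g\colon K_0(\mh^H(X))\to K_0(\mh(X))\otimes \C,\qquad [\MC]\mapsto \sum_{\lambda}\lambda\cdot [\MC_\lambda]\,,
\]
and analogously on coherent sheaves; it forgets the $H$-equivariance on each eigensheaf.

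The two identities I need are (a) $\mc\circ \mathrm{tr}_g=\mathrm{tr}_g\circ \mc^H$, and (b) $td_*(g)=td_*\circ \mathrm{tr}_g$, where the right-hand $td_*$ in each case is the complexified non-equivariant transformation. Identity (a) holds because ${\rm gr}^F_pDR^G$ is, by construction via the Appendix, the equivariantification of the additive functor ${\rm gr}^F_pDR$, and hence commutes with each projector $P_\lambda$. For (b), I would write any sheaf $\FC_\lambda$ on which $g$ acts as the scalar $\lambda$ as $\Xi_\lambda\otimes \FC'_\lambda$, with $\Xi_\lambda:=\OO_X\otimes V_\lambda$ the trivial equivariant line bundle on which $g$ acts by $\lambda$ and $\FC'_\lambda$ denoting $\FC_\lambda$ endowed with the trivial $H$-equivariant structure; the module property of LRR then gives
\[
td_*(g)([\FC_\lambda])=ch(\Xi_\lambda|_X)(g)\cap td_*(g)([\FC'_\lambda])=\lambda\cdot td_*([\FC'_\lambda])=\lambda\cdot td_*([\FC_\lambda]),
\]
where the middle equality uses that for a trivially-equivariant sheaf $td_*(g)$ collapses to the non-equivariant $td_*$ (verifiable on the generators coming from proper pushforward of a smooth resolution, using the smooth normalization of LRR).

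Combining (a) and (b) with the definition ${\mt_y}_*(g):=\Psi_{(1+y)}\circ td_*(g)\circ \mc^G$ yields
\[
{\mt_y}_*(g)=\Psi_{(1+y)}\circ td_*\circ \mc\circ \mathrm{tr}_g={\mt_y}_*\circ \mathrm{tr}_g,
\]
and since the non-equivariant ${\mt_y}_*$ lands in $H^{BM}_{ev}(X;\Q)\otimes \Q[y^{\pm 1}]$ by \cite{Sch3}, the desired factorization through $H^{BM}_{ev}(X)\otimes \C[y^{\pm 1}]$ follows. For the $y=-1$ specialisation, the non-equivariant identity ${\mt_{-1}}_*=c_*\circ rat$ from \cite{BSY} (the non-equivariant specialisation of diagram (\ref{comm})), combined with the additivity of $rat$ and its commutation with each $P_\lambda$, gives
\[
{\mt_{-1}}_*(g)([\MC])=\textstyle\sum_{\lambda}\lambda\cdot c_*(rat([\MC_\lambda]))=c_*(g)(rat([\MC])),
\]
the last equality by the paper's definition of $c_*(g)$ as $c_*$ applied to stalkwise traces, which under the eigendecomposition become the constructible function $\sum_\lambda \lambda\cdot \chi(rat(\MC_\lambda)_{\bullet})$. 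The main bookkeeping concern is verifying that ${\rm gr}^F_pDR^G$, $rat$, and the LRR module property are each genuinely compatible with $\psi_g$, but this is built into the equivariantification procedure of the Appendix.
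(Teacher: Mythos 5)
Your strategy is genuinely different from the paper's: you attempt to factor ${\mt_y}_*(g)$ as ${\mt_y}_*\circ \mathrm{tr}_g$ through an eigenspace--trace map and then import the non-equivariant results of \cite{BSY,Sch3} wholesale, whereas the paper reduces (via non-equivariant resolution of singularities) to the explicit generators $f_*[j_*\LL^H]$ and reads off both claims from the logarithmic de Rham formula of Prop.\ref{open}, where the only eigendecomposition needed is that of the Deligne extension $\bar{\VV}$ as a $G$-vector bundle. Several of your intermediate steps are sound: the reduction to the cyclic group $\langle g\rangle$ is harmless, and your identity (b) --- that for a trivial action on $X$ one has $td_*(g)([\FC])=\sum_\lambda \lambda\cdot td_*([\FC_\lambda])$ --- is a genuine property of the Lefschetz--Riemann--Roch transformation (it is how formula (\ref{id3}) is derived in \cite{BFQ,M}), and is unproblematic because $\co^H(X)$ is $\OO_X$-linear, so the eigensheaves really exist.

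The gap is in the definition of $\mathrm{tr}_g$ on the Hodge-module side, and it propagates to exactly the point where you quote the non-equivariant theorems. The category $\mh(X)$ is $\Q$-linear and a mixed Hodge module has an underlying perverse sheaf with $\Q$-coefficients; hence a $\lambda$-eigenobject $\MC_\lambda$ for $\lambda\notin\Q$ does not exist as an object of $\mh(X)$, and the Karoubianness of \cite{BS,LC} invoked for Prop.\ref{L1} concerns the $\Q$-linear idempotent $\frac{1}{\vert G\vert}\sum_g \psi_g$, not your $P_\lambda$, whose coefficients $\lambda^{-i}$ are irrational; the naive scalar extension $\mh^H(X)\otimes_\Q\C$ is not covered by those references, and its $K_0$ is in any case not $K_0(\mh^H(X))\otimes_\Z\C$. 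What does exist over $\Q$ is the coarser decomposition by the cyclotomic idempotents of $\Q[\langle g\rangle]$; but the pieces $\E_\lambda$ of $\mc^H([\MC])$ obtained after then eigendecomposing on the coherent side are no longer of the form $\mc([\mathcal{N}])$ for $\mathcal{N}\in D^b\mh(X)$, so neither the absence of negative powers of $(1+y)$ (\cite{Sch3}[Prop.5.21]) nor the identity ${\mt_{-1}}_*=c_*\circ rat$ can be invoked for them, and there is no Galois symmetry forcing the weighted sum $\sum_\lambda \lambda\cdot \Psi_{(1+y)}\bigl(td_*([\E_\lambda])\bigr)$ to be regular at $y=-1$ term by term even though the unweighted sum is. To repair this you must either extend the non-equivariant results to mixed Hodge modules with cyclotomic coefficients (true, but not in the cited references), or reduce to the generators $f_*[j_*\LL^H]$ by resolution of singularities --- which is precisely the paper's proof via Prop.\ref{open}: there $ch_{(1+y)}(\chi_y(\bar{\VV}))(g)$ visibly involves only non-negative powers of $(1+y)$, the remaining factor is the non-equivariant class ${\mt_y}_*([j_*\Q^H_U])$ to which \cite{Sch3} applies, and the $y=-1$ statement follows from $[\chi_{-1}(\bar{\VV})]=[\bar{\VV}]$ together with the commutation of the stalkwise trace with $Rj_*$.
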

\begin{proof} By functoriality, it suffices to prove the statement in the context of Prop.\ref{open}, since the Grothendieck group $K_0 (\mh^G(X))$ is generated by proper direct images of classes of such $j_*\LL^H$, as follows by using (non-equivariant) resolutions of singularities.
Note that in the setup of Prop.\ref{open}, we have $$ch_{(1+y)}( \lambda_y(\Omega^1_X(\log D)) ) \cap \Psi_{(1+y)}\left(td_*(X)\right)={{\mt}_y}_*([j_*\Q_X^H]) \in H_{ev}^{BM}(X) \otimes \C[y^{\pm 1}],$$ and ${{\mt}_{-1}}_*([j_*\Q_X^H])=c_*([Rj_*\Q_X])$, 
as follows from \cite{Sch3}[Prop.5.21]. Next, we use that 
$$[\chi_{-1}(\bar{\VV})]=[{\bar \VV}]\in K^0_G(X)$$
and the fact that the (locally constant) traces of $g$ on ${\bar \VV}$ and resp. $\LL$ agree. Finally, we get:
$${\mt_{-1}}_*(g)([j_*\LL^H])=tr_g(\LL) \cdot c_*([Rj_*\Q_X])=c_*(g)([Rj_*\LL]), $$ since the transformation $tr_g$ from $D^{b,G}_c(X;\Q)$ to constructible functions on $X$ commutes with $Rj_*$, see \cite{S}[p.138].
\end{proof}

As an application of Theorem \ref{thm1} to the relative setting, we also have the following result:

\begin{theorem}\label{thm2}
Let $f:Y \to X$ be a $G$-equivariant proper morphism of quasi-projective varieties, with $X$ smooth and connected, and $G$ a finite group of algebraic automorphisms of $Y$ and resp. $X$. Assume for simplicity that 
$f$ is a  locally trivial topological fibration in the complex topology. Then, if for $g \in G$ we let $f^g:Y^g \to X^g$ denote the induced map,  we have that: 
\begin{equation}\label{fiveb}
f^g_*\widetilde{{T_y}}_*(Y;g)
=ch(\chi_y(f) \vert_{X^g})(g) \cap \widetilde{{T_y}}_*(X;g),
\end{equation}
or, in normalized form,
\begin{equation}\label{five}
f^g_*{T_y}_*(Y;g)
=ch_{(1+y)}(\chi_y(f) \vert_{X^g})(g) \cap {T_y}_*(X;g),
\end{equation}
where
$$\chi_y(f):=\sum_{i,p} (-1)^i \left[ {\rm Gr}^p_{\mathcal{F}} \HC_i
\right] \cdot (-y)^p \in K^0_G(X)[y]$$
is the $K$-theory equivariant 
$\chi_y$-characteristic of $f,$ for $\HC_i$ the flat bundle whose sheaf of horizontal
sections is  $R^if_*\C_Y$.
\end{theorem}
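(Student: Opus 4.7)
The plan is to reduce Theorem \ref{thm2} to Theorem \ref{thm1} by invoking the proper push-forward functoriality of the Atiyah-Singer class transformation ${\mt_y}_*(g)$, applied to the constant equivariant Hodge sheaf $\Q^H_Y$. The point is that the fibration assumption is designed precisely to force $f_*[\Q^H_Y]$ to decompose, in $K_0(\mh^G(X))$, into classes represented by $G$-equivariant smooth mixed Hodge modules on $X$, to which Theorem \ref{thm1} directly applies.

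First, since $f$ is a proper $G$-equivariant morphism and ${\mt_y}_*(g)$ commutes with proper push-forwards, we obtain
\begin{equation*}
f^g_*\,{T_y}_*(Y;g) \;=\; f^g_*\,{\mt_y}_*(g)([\Q^H_Y]) \;=\; {\mt_y}_*(g)\bigl(f_*[\Q^H_Y]\bigr).
\end{equation*}
Properness and boundedness of the direct image on the derived category of $G$-equivariant mixed Hodge modules allow us to decompose
\begin{equation*}
f_*[\Q^H_Y] \;=\; \sum_{i \geq 0} (-1)^i \bigl[\mathcal{H}^i f_* \Q^H_Y\bigr] \;\in\; K_0(\mh^G(X)).
\end{equation*}

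Second, because $f$ is a locally trivial topological fibration and $X$ is smooth and connected, each $\mathcal{H}^i f_* \Q^H_Y$ is a \emph{smooth} $G$-equivariant mixed Hodge module on $X$, with underlying local system $R^i f_* \C_Y = \HC_i$. By Saito's theory, these correspond to graded-polarizable admissible variations with quasi-unipotent monodromy at infinity, i.e., to ``good'' $G$-equivariant variations of mixed Hodge structures on $X$; the $G$-equivariance is inherited through the weak equivariant derived category construction of the Appendix. Applying Theorem \ref{thm1} to each of these smooth pieces gives
\begin{equation*}
{\mt_y}_*(g)\bigl([\mathcal{H}^i f_* \Q^H_Y]\bigr) \;=\; \bigl( ch_{(1+y)}(\chi_y(\HC_i)\vert_{X^g})(g) \cup T^*_y(X;g) \bigr) \cap [X^g].
\end{equation*}

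Third, summing with alternating signs and using the definition $\chi_y(f) = \sum_i (-1)^i \chi_y(\HC_i) \in K^0_G(X)[y]$ together with Proposition \ref{normal} in the form ${T_y}_*(X;g) = T^*_y(X;g) \cap [X^g]$, yields the normalized formula (\ref{five}). The un-normalized identity (\ref{fiveb}) follows by the same argument, replacing ${\mt_y}_*(g)$ with $\widetilde{{\mt_y}_*}(g)$ and $ch_{(1+y)}$ with $ch$ throughout, and using the un-normalized half of Proposition \ref{normal}.

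The main obstacle is the second step: verifying that $\mathcal{H}^i f_* \Q^H_Y$ really is a smooth $G$-equivariant mixed Hodge module whose associated VMHS is ``good'' in the sense used by Theorem \ref{thm1}. This needs the stability of smoothness under proper topologically locally trivial direct images, together with Saito's admissibility theorem for proper direct images; the $G$-equivariance is then automatic. Once this is in place, the rest of the argument is essentially linear algebra in $K^0_G(X)[y]$ combined with the module-type formula of Theorem \ref{thm1}.
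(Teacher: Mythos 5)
Your proposal is correct and follows essentially the same route as the paper: decompose $f_*[\Q^H_Y]$ in $K_0(\mh^G(X))$ into (classes of) its cohomology objects, observe that the local triviality of $f$ and smoothness of $X$ force these to be smooth $G$-equivariant mixed Hodge modules underlying the good variations $\HC_i = R^i f_*\Q_Y$, and then apply Theorem \ref{thm1} termwise. The only point the paper is slightly more careful about is the perverse shift — it works with $H^{i+\dim X}(f_*\Q^H_Y)[-\dim X]$, whose underlying complex is ${}^p\HC^{i+\dim X}(Rf_*\Q_Y) = (R^i f_*\Q_Y)[\dim X]$ shifted back — but this does not affect the alternating sum.
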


\begin{proof}
We have the following identity in $K_0(\mh^G(X))$:
\begin{equation}\label{i5} \left[ f_*\Q^H_Y \right]=\sum_{i \in \Z} (-1)^i [H^i(f_* \Q^H_Y)]=\sum_{i \in \Z}
(-1)^i \left[ H^{i+\text{dim}X}(f_* \Q^H_Y)[-\text{dim}X]\right].
\end{equation}
Note that $H^{i+\text{dim}X}(f_* \Q^H_Y) \in \mh^G(X)$ is the smooth
$G$-mixed Hodge module on $X$ whose underlying rational $G$-complex is
(recall that $X$ is smooth)
\begin{equation} rat (H^{i+\text{dim}X}(f_*
\Q^H_Y))={^p\HC}^{i+\text{dim}X}(Rf_*\Q_Y)=(R^i
f_*\Q_Y)[\text{dim} X],
\end{equation}
where ${^p\HC}$ denotes the perverse cohomology functor. So the local systems $\LL_i:=R^{i} f_*\Q_Y$ underly a $G$-equivariant ``good'' variation of mixed Hodge structures. By
applying the natural transformations ${\widetilde{\mt_y}_*}(g)(-)$ and ${\mt_y}_*(g)(-)$, respectively, to the equation
(\ref{i5}), and using the fact that $f$ (hence $f^g$) is proper, we obtain:
$$f^g_*\widetilde{{T_y}}_*(Y;g)=\sum_i(-1)^i \widetilde{{T_y}}_*(X,\LL_i;g)$$ and 
$$f^g_*{T_y}_*(Y;g)=\sum_i(-1)^i {T_y}_*(X,\LL_i;g),$$ respectively.
In view of Theorem \ref{thm1} this yields the desired result.
\end{proof}

\begin{remark}\rm
The above proof only uses the fact that all direct image sheaves $\LL_i:=R^{i} f_*\Q_Y$ are locally constant, thus the theorem holds under this weaker assumption. Similar formulae can be obtained for $f^g_*{IT_y}_*(Y;g)$ and $f^g_*\widetilde{{IT_y}}_*(Y;g)$, for $Y$ pure dimensional. These give a class version of the corresponding equivariant trace formulae of \cite{eCMS}, and provide  equivariant analogues of the class formulae from \cite{MS}.
\end{remark}

\begin{remark}\rm If $X$ is a (possibly singular) complex projective variety acted upon by a finite group $G$ of algebraic automorphisms and $f:X \to \{ {pt} \}$ is the contant map to a point (which is regarded as a $G$-equivariant morphism),  our formulae  $(\ref{fiveb})$ and $(\ref{five})$ specialize to the ``degree axiom" of Proposition \ref{degree}.

\end{remark}

\section{Equivariant Hirzebruch classes and invariants of global quotients}

\subsection{A computation of Hirzebruch classes of global quotients}\label{comporb} In this section we give a formula for the normalized homology Hirzebruch class  ${T_y}_*(X/G)$ of the global quotient $X/G$, for the algebraic action of a finite group $G$ on a (possibly {\it singular}) quasi-projective variety $X$. All results in this section can easily be reformulated for the corresponding un-normalized Hirzebruch classes (even with coefficients).

Our formula for ${T_y}_*(X/G)$ below is motivated by the well-known relation (e.g., see \cite{eCMS}):
\begin{equation}\label{two}
\chi_y(X/G)=\frac{1}{\vert G \vert} \sum_{g \in G} \chi_y(X;g).
\end{equation}
Indeed, if $X$ is projective, the left-hand side of (\ref{two}) is the degree of the zero-dimensional component of ${{T_y}}_*(X/G)$, while by Prop.\ref{degree} each term $\chi_y(X;g)$ in the sum of the right-hand side is the degree of the previously defined normalized Atiyah-Singer class  ${{T_y}}_*(X;g)$. Thus it is natural to expect that ${{T_y}}_*(X/G)$ is obtained by averaging (in the appropriate sense) the Atiyah-Singer classes ${{T_y}}_*(X;g)$ corresponding to various elements $g \in G$. One of the main results of this section is the following:
\begin{theorem}\label{orb} Let $G$ be a finite group acting by algebraic automorphisms on the complex quasi-projective variety $X$.  Let  $\pi^g:X^g \to X/G$ be the composition of the projection map $\pi:X \to X/G$ with the inclusion $i^g:X^g \hookrightarrow X$. Then
\begin{equation}\label{Z}
 {{T_y}}_*(X/G)=\frac{1}{\vert G \vert} \sum_{g \in G} \pi^g_*{{T_y}}_*(X;g).
\end{equation}
\end{theorem}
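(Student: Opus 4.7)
The plan is to trace the class ${T_y}_*(X/G) = {\mt_y}_*([\Q^H_{X/G}])$ through the defining composition $\Psi_{(1+y)} \circ td_* \circ \mc$, and to compare it step by step with the composition ${\mt_y}_*(g) = \Psi_{(1+y)} \circ td_*(g) \circ \mc^G$ applied to $[\Q^H_X] \in K_0(\mh^G(X))$. The bridge between the two sides is the identification
$$[\Q^H_{X/G}] = [\pi_* \Q^H_X]^G \in K_0(\mh(X/G)),$$
where $\pi: X \to X/G$ is viewed as a finite $G$-equivariant morphism with trivial $G$-action on the target, and $[-]^G$ is the invariants projector of Proposition \ref{L1}. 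This holds because the underlying rational sheaf of $\pi_* \Q^H_X$ is $\pi_* \Q_X$, whose $G$-invariants compute $\Q_{X/G}$, and $(-)^G$ is an exact projector on $\mh^G(X/G)$.

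Applying $\mc$ to both sides, then invoking Proposition \ref{L1} together with the functoriality of $\mc^G$ under the proper push-forward $\pi_*$ (part of the equivariant framework of Sect.~\ref{equivc} and the Appendix), I obtain
$$\mc([\Q^H_{X/G}]) = [\mc^G([\pi_* \Q^H_X])]^G = [\pi_* \mc^G([\Q^H_X])]^G \in K_0(\co(X/G))[y^{\pm 1}].$$
The remaining task is to show that applying the non-equivariant Todd transformation to the $G$-invariant of a push-forward produces exactly the sought-after average $\frac{1}{|G|}\sum_g \pi^g_* td_*(g)(-)$ of Lefschetz-Riemann-Roch classes.

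The heart of the argument is the following averaging formula: for a quasi-projective variety $Y$ on which $G$ acts trivially and any $[\MC] \in K_0(\co^G(Y))$,
$$td_*([\MC]^G) = \frac{1}{|G|} \sum_{g \in G} td_*(g)([\MC]) \in H^{BM}_{ev}(Y;\C).$$
This I would prove by decomposing $\MC = \bigoplus_i \MC_i \otimes V_i$ according to the irreducible complex $G$-representations $V_i$, applying the module property to get $td_*(g)([\MC_i \otimes V_i]) = \chi_{V_i}(g) \cdot td_*([\MC_i])$, and then using the orthogonality relation $\frac{1}{|G|}\sum_g \chi_{V_i}(g) = \dim V_i^G$ to reassemble $td_*([\MC^G])$. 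Specialising to $Y = X/G$ and $\MC = \pi_* \mc^G([\Q^H_X])$, and combining with the covariance $td_*(g) \circ \pi_* = \pi^g_* \circ td_*(g)$ (valid since $(X/G)^g = X/G$ and $\pi^g = \pi \circ i^g$), yields
$$td_*\bigl([\pi_* \mc^G([\Q^H_X])]^G\bigr) = \frac{1}{|G|} \sum_{g \in G} \pi^g_* \, td_*(g)\bigl(\mc^G([\Q^H_X])\bigr).$$
Since $\Psi_{(1+y)}$ acts by scaling in each Borel-Moore degree, it commutes with every $\pi^g_*$; applying it to both sides produces the desired identity.

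The main obstacle is the averaging formula $td_*([\MC]^G) = \frac{1}{|G|}\sum_g td_*(g)([\MC])$ for trivial $G$-action: it is the unique place where the sum over $g \in G$ is generated, and it precisely encodes the interaction of the module property of the Lefschetz-Riemann-Roch with character orthogonality. Everything else — the mixed Hodge module identification $[\Q^H_{X/G}] = [\pi_*\Q^H_X]^G$, the commutation furnished by Proposition \ref{L1}, the proper push-forward functoriality of $\mc^G$, the covariance of $td_*(g)$, and the degreewise compatibility of $\Psi_{(1+y)}$ with $\pi^g_*$ — is routine once the equivariant framework of Sect.~\ref{equivc} is in place.
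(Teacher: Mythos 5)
Your proposal is correct and follows essentially the same route as the paper: the identification $[\Q^H_{X/G}]=[\pi_*\Q^H_X]^G$ (the paper's Lemma \ref{L2}), commuting $[-]^G$ past the motivic Chern class via Proposition \ref{L1}, functoriality of $\mc^G$ under $\pi_*$, and finally the averaging identity $td_*((\pi_*\FC)^G)=\frac{1}{|G|}\sum_g \pi^g_*td_*(g)(\FC)$. The only difference is that the paper cites this last identity, equation (\ref{id3}), directly from Baum--Fulton--Quart and Moonen, whereas you re-derive it from the covariance and module properties together with character orthogonality --- a valid and self-contained substitute.
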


\begin{remark}\rm Theorem \ref{orb} is an essential ingredient for obtained generating series formulae for homology Hirzebruch classes of symmetric products of {\it singular} complex quasi-projective algebraic varieties, see \cite{CMSSY} (compare also with \cite{MS09,MSS}, where the case of Hodge numbers and Hodge-Deligne polynomials  is discussed). \end{remark}

The proof of Theorem \ref{orb} uses the following lemma in the special case $\MC=\Q^H_{X/G}$:

\begin{lemma}\label{L2}
Let $X':=X/G$ be the quotient space, with finite projection map $\pi:X \to X'$, which is viewed as a $G$-map with trivial action on $X'$. Then:
\begin{equation}
\left(\pi_* \pi^*\MC \right)^G \simeq \MC,
\end{equation} for any $\MC \in D^b\mh(X')$, 
where $\pi^*\MC$ and resp. $\pi_* \pi^*\MC$ get an induced $G$-action as explained in the Appendix. Here, 
$$(-)^G:=\frac{1}{\vert G \vert} \sum_{g \in G} \psi_g : D^{b,G}\mh(X') \to D^{b}\mh(X')$$
denotes the projector onto the $G$-invariant sub-object.
\end{lemma}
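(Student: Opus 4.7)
The plan is to construct the canonical morphism $\MC \to (\pi_*\pi^*\MC)^G$ via the $(\pi^*,\pi_*)$-adjunction and then verify it is an isomorphism by reducing to classical Galois descent on the underlying structures.

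First, I would construct the map: the adjunction unit produces a natural morphism $\eta_\MC\colon \MC \to \pi_*\pi^*\MC$ in $D^b\mh(X')$. The $G$-action on $X$ (trivial on $X'$) induces a $G$-action on $\pi^*\MC$, and hence on $\pi_*\pi^*\MC$, as explained in the Appendix, while $\MC$ itself carries the trivial $G$-action. By naturality of the adjunction, $\eta_\MC$ is $G$-equivariant, so $\psi_g \circ \eta_\MC = \eta_\MC$ for all $g \in G$. Thus $\eta_\MC$ factors through the $G$-invariant subobject, yielding a canonical morphism $\tilde\eta\colon \MC \to (\pi_*\pi^*\MC)^G$.

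Second, I would check that $\tilde\eta$ is an isomorphism via the underlying perverse sheaf (and filtered $D$-module). Since $\pi$ is finite, $\pi_*$ on $D^b\mh(X')$ is computed by the sheaf-theoretic direct image on underlying structures, with no dimension shift or correction; moreover $\pi_*$ is t-exact. The projector $(-)^G = \frac{1}{|G|}\sum_g \psi_g$ is exact on $\mh(X')$, since the category is $\Q$-linear Karoubian and $|G|$ is invertible. Therefore the composition commutes with $rat$, so it suffices to show that the classical adjunction morphism $rat(\MC) \to (\pi_*\pi^{-1} rat(\MC))^G$ is an isomorphism of $\Q$-constructible complexes.

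Third, this classical map is a stalkwise isomorphism by Galois descent: for any $x' \in X'$ with fiber $\pi^{-1}(x') = Gx_0$ (a single $G$-orbit), finiteness of $\pi$ gives
\[
(\pi_*\pi^{-1}\FC)_{x'} \cong \bigoplus_{x \in Gx_0} \FC_{x'},
\]
where $G$ permutes the summands while acting trivially on each copy of $\FC_{x'}$. Hence the $G$-invariants form the diagonal copy of $\FC_{x'}$, recovering $\FC_{x'}$. An analogous computation applies to the underlying filtered $D$-module via Saito's compatibility of $\pi_*$ and $\pi^*$ with the $D$-module theoretic direct and inverse images for finite morphisms.

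The main obstacle will be to ensure that this isomorphism of underlying structures genuinely lifts to an isomorphism in $D^b\mh(X')$, rather than merely an isomorphism on the Grothendieck group. This requires the strictness of $\tilde\eta$ with respect to both the Hodge filtration $F^\centerdot$ and the weight filtration $W_\centerdot$ of $\MC$. Such strictness follows from the functoriality of Saito's $\pi_*$ and $\pi^*$ in the category $\mh$, together with the fact that each $\psi_g$ is itself a morphism of mixed Hodge modules, so the averaged projector is strict. Once this compatibility is in place, the stalkwise Galois descent isomorphism promotes to an isomorphism in $D^b\mh(X')$, giving the lemma.
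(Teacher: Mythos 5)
Your proof is correct and follows essentially the same route as the paper: construct the map as the adjunction unit composed with (equivalently, factored through) the projector, reduce the isomorphism claim to the underlying constructible complexes via $rat$, and verify there by classical Galois descent. The only differences are cosmetic — the paper justifies the reduction by noting that the cone is zero iff its image under $rat$ is zero (conservativity of $rat$, which is the clean way to phrase the "lifting" issue you raise via strictness), and it checks the sheaf-level statement by the projection formula together with Grothendieck's isomorphism $\Q_{X'} \simeq (\pi_*\pi^*\Q_{X'})^G$ rather than by your direct stalkwise orbit computation.
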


\begin{proof} Consider the composite morphism
\begin{equation}\label{c}
\MC \to \pi_* \pi^*\MC \to (\pi_* \pi^*\MC)^G,
\end{equation}
with the first arrow defined by the adjunction morphism $id \to \pi_*\pi^*$, and the second being induced by the projector.

The claim is that the morphism (\ref{c}) is in fact an isomorphism. This is equivalent to showing that  the cone of (\ref{c}) is $0\in D^b\mh(X')$. And the latter statement can be checked  after applying the forgetful functor ${\rm rat}: D^b\mh(X') \to D^b_c(X';\Q)$, which commutes with the projector onto the $G$-invariant part, as well as with the adjunction morphism.
Note that one has the following sequence of equivalences for a mixed Hodge module complex on the complex algebraic variety $X'$:
\begin{equation}\label{rat} \begin{split}
\MC \simeq 0 \in D^b\mh(X') &\Leftrightarrow H^i(\MC)\simeq 0 \in \mh(X'), \:\text{for all $i$}\\
&\Leftrightarrow  rat\left(  H^i(\MC)\right) = {^p\HC}^i(rat(\MC)) \simeq 0 \in Perv(X';\Q), \:\text{for all $i$}\\
&\Leftrightarrow rat(\MC)\simeq 0 \in D^b_c(X';\Q) \:. \end{split}
\end{equation}
The first equivalence above follows from the definition of a derived category, the second uses the faithfulness of the forgetful functor $rat: \mh(X')\to Perv(X';\Q)$ associating to a mixed Hodge module
the underlying perverse sheaf (with ${^p\HC}^*(-)$ denoting the \emph{perverse cohomology groups}), while the last equivalence follows from the boundedness of the perverse $t$-structure on $ D^b_c(X';\Q)$. Lastly, for $\FC\in D^b_c(X';\Q)$, the underlying sheaf map $ \FC \to (\pi_*\pi^*\FC)^G$ corresponds by the ($G$-equivariant) projection formula   
$$\pi_*\pi^*\FC \simeq  \pi_*(\pi^* \FC\otimes \pi^*\Q_{X'}) \simeq \FC \otimes (\pi_*\pi^*\Q_{X'})$$ to the morphism
$$\FC \otimes \Q_{X'} \to \FC \otimes (\pi_*\pi^*\Q_{X'})^G$$ induced by the adjunction $\Q_{X'} \to  (\pi_*\pi^*\Q_{X'})^G$, which is an isomorphism by \cite{G}[ch.V]. 
\end{proof}

\begin{proof}[Proof of Thm.\ref{orb}] It suffices to prove the un-normalized version of the result. 
First recall that it follows from  the Lefschetz-Riemann-Roch theorem that the following identity holds for any $\FC \in K_0({\rm Coh}^G(X))$, e.g., see \cite{BFQ,M}:
\begin{equation}\label{id3}
td_*((\pi_*\FC)^G)=\frac{1}{\vert G \vert} \sum_{g \in G}  \pi^g_* td_*(g)(\FC).
\end{equation}
Theorem \ref{orb} follows now from the following sequence of identities: 
 {\allowdisplaybreaks
\begin{eqnarray*}  \widetilde{T_y}_*(X')
&\overset{\rm def}{=}& \widetilde{{\mt_y}_*}([\Q^H_{X'}]) \\
&\overset{{\rm Lem.}  \ref{L2}}{=}& td_* \left(  \mc([\pi_* \pi^*\Q^H_{X'}]^G) \right)\\
&=& td_* \left(  \mc([\pi_* \Q^H_X]^G) \right) \\
&\overset{{\rm Prop.}  \ref{L1}}{=}& td_* \left(  \left[\mc^G([\pi_* \Q^H_X])\right]^G \right) \\
&=& td_* \left(  \left[\pi_*(\mc^G([\Q^H_X]))\right]^G \right)\\
&\overset{(\ref{id3})}=&  \frac{1}{\vert G \vert} \sum_{g \in G}  \pi^g_* td_*(g)\left(\mc^G([\Q^H_X])\right) \\
&=& \frac{1}{\vert G \vert} \sum_{g \in G}  \pi^g_*\widetilde{T_y}_*(X;g).
\end{eqnarray*} }
\end{proof}

The above argument yields in fact the following more general result: 
\begin{theorem}\label{gen} Let $G$ be a finite group acting by algebraic automorphisms on the complex quasi-projective variety $X$.  Let  $\pi^g:X^g \to X/G$ be the composition of the projection map $\pi:X \to X/G$ with the inclusion $i^g:X^g \hookrightarrow X$. Then, for any $\MC \in D^{b,G}\mh(X)$, we have:
\begin{equation}\label{arbn}
{{\mt}_y}_*([\pi_*\MC]^G)=\frac{1}{\vert G \vert} \sum_{g \in G} \pi^g_* {{\mt}_y}_*(g)([\MC]).
\end{equation}
\end{theorem}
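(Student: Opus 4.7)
The plan is to prove the un-normalized version of (\ref{arbn}) and then transfer to the normalized statement by applying the Adams operation $\Psi_{(1+y)}$. The argument follows the template already used in the proof of Theorem \ref{orb}, with the simplification that we no longer need Lemma \ref{L2}: the step which there rewrote $\Q^H_{X/G}$ as $[\pi_*\Q^H_X]^G$ is obviated by the fact that we now start directly with an arbitrary $[\pi_*\MC]^G$ for $\MC \in D^{b,G}\mh(X)$.

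First I would unwind the left-hand side via the definition $\widetilde{{\mt_y}_*}([\pi_*\MC]^G) = td_*(\mc([\pi_*\MC]^G))$. The map $\pi:X\to X/G$ is $G$-equivariant and finite (hence proper), while $G$ acts trivially on $X/G$. Proposition \ref{L1}, applied on $X/G$, gives
\begin{equation*}
\mc\bigl([\pi_*\MC]^G\bigr) \;=\; \bigl[\mc^G([\pi_*\MC])\bigr]^G.
\end{equation*}
By the functoriality of $\mc^G$ under proper pushforward, this equals $\bigl[\pi_*\mc^G([\MC])\bigr]^G$ in $K_0(\co^G(X/G))\otimes\Z[y^{\pm 1}]$.

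Next, applying the Lefschetz--Riemann--Roch identity (\ref{id3}) to $\FC := \mc^G([\MC])$ (extended linearly in $y$) yields
\begin{equation*}
td_*\bigl([\pi_*\mc^G([\MC])]^G\bigr) \;=\; \frac{1}{\vert G \vert} \sum_{g\in G} \pi^g_*\, td_*(g)\bigl(\mc^G([\MC])\bigr) \;=\; \frac{1}{\vert G \vert} \sum_{g\in G} \pi^g_* \widetilde{{\mt_y}_*}(g)([\MC]),
\end{equation*}
where the last step is just the definition of $\widetilde{{\mt_y}_*}(g) = td_*(g)\circ \mc^G$. This gives the un-normalized version.

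Finally, to pass to the normalized statement, I would apply $\Psi_{(1+y)}$ to both sides. Proper pushforward $\pi^g_*$ in Borel--Moore homology preserves the homological degree, so it commutes with $\Psi_{(1+y)}$ (which acts by $(1+y)^{-k}$ on $H^{BM}_{2k}$). Since ${\mt_y}_*(-) = \Psi_{(1+y)}\circ \widetilde{{\mt_y}_*}(-)$ and likewise ${\mt_y}_*(g)(-) = \Psi_{(1+y)}\circ \widetilde{{\mt_y}_*}(g)(-)$, the identity (\ref{arbn}) follows. There is no real obstacle here: the work has already been done in assembling Proposition \ref{L1}, the functoriality of $\mc^G$ under proper maps, and the Lefschetz--Riemann--Roch formula (\ref{id3}). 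The proof is a chain of these three compatibilities, each of which reflects a way the projector $(-)^G$ interacts with a piece of the characteristic class machinery.
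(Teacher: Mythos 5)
Your proof is correct and follows essentially the same route as the paper: the paper proves Theorem \ref{gen} by observing that the chain of identities used for Theorem \ref{orb} (Proposition \ref{L1}, proper-pushforward functoriality of $\mc^G$, and the Lefschetz--Riemann--Roch identity (\ref{id3})) applies verbatim to an arbitrary $\MC$, with Lemma \ref{L2} needed only to recover the special case $\MC=\Q^H_X$. Your handling of the normalization via commuting $\Psi_{(1+y)}$ with the degree-preserving pushforwards matches the paper's implicit reduction to the un-normalized statement.
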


Combining this with Lemma \ref{L2}, we get the following result which allows to calculate invariants on $X/G$ in terms of equivariant invariants on $X$:
\begin{cor}\label{cornew} Let $G$ be a finite group acting by algebraic automorphisms on the complex quasi-projective variety $X$.  Let  $\pi^g:X^g \to X/G$ be the composition of the projection map $\pi:X \to X/G$ with the inclusion $i^g:X^g \hookrightarrow X$. Then, for any $\MC \in D^{b}\mh(X/G)$, we have:
\begin{equation}\label{arbn2}
{{\mt_y}_*}([\MC])=\frac{1}{\vert G \vert} \sum_{g \in G} \pi^g_* {\mt_y}_*(g)([\pi^*\MC]).
\end{equation}
\end{cor}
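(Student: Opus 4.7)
The plan is to reduce Corollary \ref{cornew} to Theorem \ref{gen} by applying the latter to the equivariant object $\pi^*\MC \in D^{b,G}\mh(X)$, and then invoke Lemma \ref{L2} to identify the left-hand side.

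First I would explain how $\pi^*\MC$ acquires a $G$-equivariant structure: since $\pi\circ g = \pi$ for every $g\in G$, the identity $g^*\pi^* = \pi^*$ gives canonical isomorphisms $\pi^*\MC \to g_*\pi^*\MC$ (using $g_*=g_*^{-1}$ as $g$ is an automorphism of $X$) satisfying the cocycle condition, so $[\pi^*\MC]\in K_0(\mh^G(X))$ is well defined. Applying Theorem \ref{gen} to this class yields
\begin{equation*}
{\mt_y}_*\bigl([\pi_*\pi^*\MC]^G\bigr)
=\frac{1}{|G|}\sum_{g\in G}\pi^g_*\,{\mt_y}_*(g)([\pi^*\MC]),
\end{equation*}
so it only remains to identify the left-hand side with ${\mt_y}_*([\MC])$.

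For that, I would invoke Lemma \ref{L2} with the same $\MC\in D^b\mh(X/G)$, which gives a canonical isomorphism $(\pi_*\pi^*\MC)^G\simeq \MC$ in $D^b\mh(X/G)$. Passing to Grothendieck group classes and applying the transformation ${\mt_y}_*$ produces
\begin{equation*}
{\mt_y}_*\bigl([\pi_*\pi^*\MC]^G\bigr)={\mt_y}_*([\MC]),
\end{equation*}
which, combined with the displayed identity above, yields formula (\ref{arbn2}).

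I do not expect a real obstacle here: the whole argument is formal, provided one is careful about the equivariant structures. The only subtle point is making sure that the two notions of the class $[\pi^*\MC]^G$ — namely, the $G$-invariant part of the Grothendieck class $[\pi_*\pi^*\MC]\in K_0(\mh^G(X/G))$ via the exact projector $[-]^G$, and the class of the (honest) invariant subobject $(\pi_*\pi^*\MC)^G$ given by Lemma \ref{L2} — coincide; this is exactly the content of the Karoubian framework used in Section \ref{equivc} (see the discussion preceding Proposition \ref{L1}), so both interpretations agree on $K_0$-classes and the substitution in the first display above is legitimate.
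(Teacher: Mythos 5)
Your proposal is correct and is exactly the argument the paper intends: Corollary \ref{cornew} is obtained by applying Theorem \ref{gen} to $\pi^*\MC$ with its induced $G$-equivariant structure (as constructed in the Appendix) and then identifying $(\pi_*\pi^*\MC)^G\simeq\MC$ via Lemma \ref{L2}. Your attention to the compatibility of the exact projector $[-]^G$ with the honest invariant subobject is a welcome explicit check of a point the paper leaves implicit.
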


Another interesting example for using Theorem \ref{gen} comes from intersection cohomology (with twisted coefficients):
\begin{lemma}\label{L3} 
Let $X$ be a pure-dimensional complex quasi-projective variety acted upon by a finite group $G$ of algebraic automorphisms. Let $X':=X/G$ be the quotient space, with finite projection map $\pi:X \to X'$, which is viewed as a $G$-map with trivial action on $X'$. Let $\LL$ be a $G$-equivariant ``good" variation of mixed Hodge structures on a $G$-invariant Zariski open dense smooth subset $\U \subset X$ with $\pi':=\pi_{\vert \U} :\U \to \U':=\U/G$ an unramified finite covering of algebraic manifolds. 
Then $\pi'_*\LL$ is a $G$-equivariant ``good" variation of mixed Hodge structures on $\U'$ and
\begin{equation}\label{LL3}
(\pi_*IC^H_X(\LL))^G\simeq IC^H_{X'}((\pi'_*\LL)^G).
\end{equation}
In particular, 
\begin{equation}\label{LL3b}
(\pi_*IC^H_X(\pi'^*\LL'))^G\simeq IC^H_{X'}(\LL')
\end{equation}
for $\LL'$ a ``good" variation of mixed Hodge structures on $\U'$. For example, if $\LL'=\Q^H_{\U'}$ one obtains: 
$$(\pi_*IC^H_X)^G\simeq IC^H_{X'}.$$
\end{lemma}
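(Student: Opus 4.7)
The plan is to reduce the statement about $G$-equivariant intersection cohomology Hodge modules to a claim on the smooth open locus $\U'$, by exploiting that intermediate extension commutes both with finite pushforward and with the $G$-invariant projector.

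First I would verify that $\pi'_*\LL$ is a $G$-equivariant good variation of mixed Hodge structures on $\U'$. Since $\pi'$ is unramified and finite, hence \'etale, the $G$-action on $\U$ is free and $\pi'$ is a Galois $G$-cover. Then $\pi'_*\LL$ is a local system of rank $|G|\cdot\mathrm{rk}(\LL)$ whose stalk at $u'\in\U'$ is the direct sum of stalks of $\LL$ over the fiber; this inherits a mixed Hodge structure and a permutation $G$-action. Graded polarizability is preserved direct-summand-wise, while admissibility and quasi-unipotency at infinity are preserved because $\pi$ extends to a finite morphism of projective compactifications, so that any boundary loop in $\U'$ lifts to a power of a boundary loop in $\U$ after passage to the cover. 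Writing $j:\U\hookrightarrow X$ and $j':\U'\hookrightarrow X'$ for the $G$-equivariant open inclusions, the identity $\pi\circ j=j'\circ\pi'$ together with finiteness of $\pi$ (so that $\pi_*=\pi_!$ is exact on mixed Hodge modules) yields the base-change identities $\pi_*j_*=j'_*\pi'_*$ and $\pi_*j_!=j'_!\pi'_*$. Applying $\pi_*$ to the image presentation
$$IC^H_X(\LL)=j_{!*}(\LL^H[\dim X])=\mathrm{im}\bigl(\mathcal{H}^0 j_!(\LL^H[\dim X])\longrightarrow \mathcal{H}^0 j_*(\LL^H[\dim X])\bigr)$$
and using $\dim X=\dim X'$, one obtains a natural $G$-equivariant isomorphism $\pi_*IC^H_X(\LL)\simeq IC^H_{X'}(\pi'_*\LL)$.

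Finally, applying the exact projector $(-)^G$ and using that the additive functor $j'_{!*}$ between the $\Q$-linear Karoubian categories $\mh^G(\U')$ and $\mh^G(X')$ commutes with idempotents (cf.\ \cite{BS,LC}, as exploited in the paper), one obtains
$$(\pi_*IC^H_X(\LL))^G\simeq j'_{!*}\bigl((\pi'_*\LL)^G[\dim X']\bigr)=IC^H_{X'}((\pi'_*\LL)^G),$$
which is (\ref{LL3}). For the special case $\LL=\pi'^*\LL'$, the $G$-equivariant projection formula $\pi'_*\pi'^*\LL'\simeq \LL'\otimes\pi'_*\Q_\U$ combined with the identification $(\pi'_*\Q_\U)^G\simeq\Q_{\U'}$ for the \'etale Galois cover $\pi'$ (as in the proof of Lemma~\ref{L2}) gives $(\pi'_*\pi'^*\LL')^G\simeq\LL'$, yielding (\ref{LL3b}); specializing to $\LL'=\Q^H_{\U'}$ recovers $(\pi_*IC^H_X)^G\simeq IC^H_{X'}$. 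The main obstacle is the commutation $\pi_*\circ j_{!*}\simeq j'_{!*}\circ\pi'_*$ in the $G$-equivariant Hodge-module framework; once the base-change equalities and the exactness of $\pi_*$ on $\mh^G$ are in place, the remainder is a formal manipulation with additive functors and idempotent decompositions.
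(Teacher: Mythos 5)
Your proposal is correct and follows essentially the same route as the paper: establish $j'_{!*}\circ\pi'_*\simeq\pi_*\circ j_{!*}$ from the $!\twoheadrightarrow !*\hookrightarrow *$ presentation together with $\pi\circ j=j'\circ\pi'$ and the ($t$-)exactness of $\pi_*=\pi_!$ for the finite map $\pi$, then commute the exact projector $(-)^G$ past $j'_{!*}$ (the paper does this via exactness of $\PC_G$ on the defining sequence rather than your additivity/Karoubian phrasing, and it shrinks $\U$ to an affine so that $j_!,j_*$ are exact instead of inserting perverse $\mathcal{H}^0$'s — both are fine), and finally deduce (\ref{LL3b}) from $(\pi'_*\pi'^*\LL')^G\simeq\LL'$ as in Lemma \ref{L2}. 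One small inaccuracy: an unramified quotient covering $\U\to\U/G$ does not force the $G$-action on $\U$ to be free (the kernel of the action may be nontrivial, as in the weighted projective space example), so $\pi'_*\LL$ has rank $\deg(\pi')\cdot\mathrm{rk}(\LL)$ rather than $|G|\cdot\mathrm{rk}(\LL)$; this does not affect the argument.
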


\begin{proof} Since $\pi'$ is an unramified covering, $\pi'_*\LL$ is a $G$-equivariant local system on $\U'$. Since $\pi'_*\LL=rat(\pi'_*\LL^H)$ underlies a $G$-equivariant smooth mixed Hodge module, it is in fact a $G$-equivariant ``good" variation of mixed Hodge structures. Similar considerations apply for the direct summand $(\pi'_*\LL)^G$.

By shrinking $\U$ if necessary, we can assume $\U$ (and thus $\U'$) is affine.  If $j : \U \hookrightarrow X$ denotes the inclusion map, the functors $$j_*, j_!: \mh(U) \to \mh(X)$$ are exact, and similarly for the corresponding equivariant functors induced on the abelian categories $\mh^G(-)$ of $G$-equivariant objects. Denote by $j':\U'=\U/G \to X'=X/G$ the induced open embedding. Then $j' \circ \pi'=\pi \circ j$. We also let the symbol $``!*"$ stand for the {\it intermediate extension} for an open embedding, which in our setting can be formally defined by the rule: ``$!*={\rm image}(! \to *)$".  Then the following identity holds in $\mh(X')$ and $\mh^G(X')$, respectively:
\begin{equation}\label{mainid}
j'_{!*}\pi'_*(\LL^H[n])=\pi_*j_{!*}(\LL^H[n]),
\end{equation}
for $n={\rm dim}_{\C}X$.
In fact,  the defining sequence for the intermediate extension $j_{!*}(\LL^H[n])$, i.e., the sequence 
\begin{equation}\label{seqdef} 
j_{!}(\LL^H[n]) \twoheadrightarrow j_{!*}(\LL^H[n]) \hookrightarrow j_{*}(\LL^H[n])
\end{equation}
in the abelian category  $\mh(X)$, also yields a $G$-action on $IC_X^H(\LL):= j_{!*}(\LL^H[n])$, thus defining $IC^H_X(\LL)$ as an object in the abelian category $\mh^G(X)$. This follows easily from the considerations in the Appendix, where equivariant counterparts for the exact functors $j_!$ and $j_*$ are constructed. In what follows, we omit the upperscript $^G$ for the equivariant functors, whenever it is clear what context we work in.

From here on, the proofs of (\ref{mainid}) in both cases $\mh(-)$ and $\mh^G(-)$ run in parallel.
By applying the exact functor $\pi_!=\pi_*$ for the finite (hence proper) map $\pi:X \to X'$ to the sequence of (\ref{seqdef}), we obtain the sequence
\begin{equation}\label{seqdef2} 
\pi_!j_{!}(\LL^H[n]) \twoheadrightarrow \pi_!j_{!*}(\LL^H[n]) \hookrightarrow \pi_*j_{*}(\LL^H[n])
\end{equation} in the abelian category $\mh(X')$ and $\mh^G(X')$, respectively. (Note that since $\pi$ is a finite morphism, the functor $\pi_*=\pi_!$ preserves (equivariant) mixed Hodge modules; indeed, after applying the faithful forgetful functor ${\it{rat}}$ to the category of (equivariant) perverse sheaves, this functor preserves (equivariant) perverse sheaves, i.e., it is $t$-exact with respect to the ($G$-invariant) perverse $t$-structure.)  Lastly, using the identities $\pi_* j_*=j'_* \pi'_*$ and $\pi_! j_!=j'_!  \pi'_!$, with $\pi'_!=\pi'_*$ for the finite map $\pi'$, we easily 
obtain (\ref{mainid}).

Since in the abelian category $\mh^G(-)$ the projector $\PC_G$  is also exact in short exact sequences of $G$-equivariant mixed Hodge modules, it commutes with the functor $ j'_{!*}$, and  it follows from the proof of Lemma \ref{L2} and the identity (\ref{mainid})  that:
 {\allowdisplaybreaks
\begin{eqnarray*}   {IC}_{X'}^H((\pi'_*\LL)^G)
&\overset{\rm def}{=}&  j'_{!*}(( \pi'_*\LL^H)^G[n])\\ 
&\cong& \left(  j'_{!*} \pi'_*(\LL^H[n]) \right)^G\\ 
&\overset{(\ref{mainid})}{\cong}& \left( \pi_*j_{!*}(\LL^H[n]) \right)^G\\
&\cong& (\pi_* IC_X^H(\LL))^G.
\end{eqnarray*} }
\end{proof}

The identifications of Lemma \ref{L3} can be used in the statement of Theorem \ref{gen} to obtain the following calculation of homology characteristic classes defined via (twisted) intersection homology Hodge modules:
\begin{cor}\label{twist} In the context and with the notations of Lemma \ref{L3}, the following identities hold:
 \begin{equation}\label{ZZ3}
{{IT_y}}_*(X/G,(\pi'_*\LL)^G)=\frac{1}{\vert G \vert} \sum_{g \in G} \pi^g_*{{IT_y}}_*(X,\LL;g),
\end{equation}
 \begin{equation}\label{ZZ4}
{{IT_y}}_*(X/G,\LL')=\frac{1}{\vert G \vert} \sum_{g \in G} \pi^g_*{{IT_y}}_*(X,\pi'^*\LL';g),
\end{equation}
and, in particular,
 \begin{equation}\label{ZZ2}
{{IT_y}}_*(X/G)=\frac{1}{\vert G \vert} \sum_{g \in G} \pi^g_*{{IT_y}}_*(X;g).
\end{equation}
\end{cor}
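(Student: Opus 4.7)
The plan is to derive all three identities as direct instances of Theorem \ref{gen}, plugging in the three different $G$-equivariant mixed Hodge modules produced by Lemma \ref{L3}. More precisely, for (\ref{ZZ3}) I would apply Theorem \ref{gen} to the class $\MC := IC^H_X(\LL)[-n] \in D^{b,G}\mh(X)$, where $n=\dim_{\C}X$; the hypothesis that $\LL$ is $G$-equivariant ensures that $IC^H_X(\LL)$ lies in $\mh^G(X)$, as was already observed at the end of the proof of Lemma \ref{L3} via the defining short exact sequence $j_!\LL^H[n] \twoheadrightarrow j_{!*}\LL^H[n] \hookrightarrow j_*\LL^H[n]$ in $\mh^G(X)$. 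By definition, the right-hand side of (\ref{arbn}) then becomes $\frac{1}{|G|}\sum_{g\in G}\pi^g_*{\mt_y}_*(g)([IC^H_X(\LL)[-n]]) = \frac{1}{|G|}\sum_{g\in G}\pi^g_*{IT_y}_*(X,\LL;g)$, which is exactly the right-hand side of (\ref{ZZ3}).

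For the left-hand side, I would use that $\pi$ is finite, hence $\pi_*$ is exact and commutes with the shift $[-n]$, so that $[\pi_*(IC^H_X(\LL)[-n])]^G = [(\pi_*IC^H_X(\LL))^G[-n]]$ in $K_0(\mh(X/G))$. Applying the isomorphism (\ref{LL3}) of Lemma \ref{L3} identifies this with $[IC^H_{X/G}((\pi'_*\LL)^G)[-n]]$, whose image under ${\mt_y}_*$ is by definition ${IT_y}_*(X/G,(\pi'_*\LL)^G)$. This gives (\ref{ZZ3}).

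For (\ref{ZZ4}) I would repeat the same argument with $\MC := IC^H_X(\pi'^*\LL')[-n]$, this time invoking (\ref{LL3b}) of Lemma \ref{L3}, which identifies $(\pi_*IC^H_X(\pi'^*\LL'))^G \simeq IC^H_{X/G}(\LL')$. Finally, (\ref{ZZ2}) is the special case $\LL' = \Q^H_{\U'}$ of (\ref{ZZ4}) (equivalently, the case $\LL = \Q^H_\U$ of (\ref{ZZ3})), using the last identification of Lemma \ref{L3} that $(\pi_*IC^H_X)^G \simeq IC^H_{X/G}$.

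There is essentially no obstacle beyond bookkeeping, since Theorem \ref{gen} already packages all the hard work (the Lefschetz-Riemann-Roch identity (\ref{id3}) combined with Proposition \ref{L1} and the functoriality of $\mc^G$). The only point requiring a moment of care is ensuring that the $G$-action on $IC^H_X(\LL)$ induced by the short exact sequence (\ref{seqdef}) agrees, at the level of $K_0(\mh^G(X))$, with the one used implicitly in defining ${IT_y}_*(X,\LL;g)$; this follows directly from the construction of the equivariant intermediate extension recalled in the proof of Lemma \ref{L3}, together with the fact that the shift functor $[-n]$ trivially commutes with the $G$-action on the underlying weak equivariant derived category.
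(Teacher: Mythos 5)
Your proposal is correct and follows exactly the route the paper intends: the paper gives no separate proof of Corollary \ref{twist}, stating only that it results from substituting the identifications of Lemma \ref{L3} into Theorem \ref{gen}, which is precisely the bookkeeping you carry out (including the correct handling of the shift $[-n]$ and of the $G$-action on $IC^H_X(\LL)$ coming from the sequence (\ref{seqdef})).
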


\subsection{Atiyah-Meyer formulae for global orbifolds}\label{orbifoldAM}
As an application of the results in the previous section, we obtain the following  Atiyah-Meyer type result for twisted Hirzebruch classes of global orbifolds, as already mentioned in the Introduction, see Thm.\ref{thintro4}:
\begin{theorem}\label{ASorb}
Let $G$ be a finite group of algebraic automorphisms of the pure $n$-dimensional quasi-projective manifold $X$, with $\pi:X \to X':=X/G$ the projection map. Let $\LL'$ be a local system on $X'$, which generically underlies a good variation of mixed Hodge structures defined on a Zariski dense open smooth subset $\U' \subset X'$, and let $\VV':=\LL' \otimes_{\Q} \mathcal{O}_{X'}$ be the associated complex algebraic vector bundle on $X'$. Then:
\begin{equation}\label{fiveintrob}
{IT_y}_*(X',\LL')
=ch_{(1+y)}(\chi_y(\VV') ) \cap {IT_y}_*(X') ,\end{equation} 
where $ch_{(1+y)}(\chi_y(\VV') ) \in H^{ev}(X';\Q[y^{\pm 1}])$ corresponds, by definition,  to $ch_{(1+y)}(\chi_y(\VV) ) \in H^{ev}(X;\Q[y^
{\pm 1}])^G$ under the isomorphism $$\pi^*:H^{ev}(X';\Q[y^{\pm 1}]) \overset{\sim}{\to} H^{ev}(X;\Q[y^{\pm 1}])^G \subset H^{ev}(X;\Q[y^{\pm1}]).$$ Here, $\LL:=\pi^*(\LL')$ underlies a good variation of mixed Hodge structures on all of $X$, and $\chi_y(\VV)$ denotes the $\chi_y$-characteristic of the associated complex algebraic vector bundle $\VV:=\LL \otimes_{\Q} \mathcal{O}_X$, with its induced Hodge filtration $\FC^{\centerdot}$.
\end{theorem}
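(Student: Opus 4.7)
The plan is to reduce this twisted orbifold Atiyah-Meyer formula to the combination of Cor.\ref{twist} (the averaging formula for intersection Hirzebruch classes of global quotients) and Thm.\ref{thm1} (the equivariant Atiyah-Meyer formula on a smooth $G$-manifold), gluing them together via the projection formula for $\pi^g: X^g \to X'$.

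First, I would apply formula (\ref{ZZ4}) of Cor.\ref{twist} with $\LL := \pi^*\LL'$. By the hypothesis of the theorem, $\LL$ underlies a good VMHS on all of the smooth manifold $X$, so the intermediate extension of the good VMHS on the dense open $\pi^{-1}(\U')$ to the ambient smooth $X$ coincides with the extended smooth mixed Hodge module, giving ${IT_y}_*(X, \LL; g) = {T_y}_*(X, \LL; g)$. Thus (\ref{ZZ4}) reads
$$
{IT_y}_*(X', \LL') = \frac{1}{|G|} \sum_{g \in G} \pi^g_* \, {T_y}_*(X, \LL; g).
$$

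Next, I would invoke Thm.\ref{thm1} to rewrite each summand as $ch_{(1+y)}(\chi_y(\VV)|_{X^g})(g) \cap {T_y}_*(X; g)$, where $\VV = \pi^*\VV'$. The crucial observation is that the equivariant Chern-character evaluation $(g)$ on a pulled-back bundle collapses to the ordinary Chern character at fixed points: since $G$ acts trivially on $X'$, the equality $\pi \circ g = \pi$ supplies a canonical identification $g^*(\pi^*\VV') = \pi^*\VV'$ whose induced action on every fibre over $X^g$ is the identity; the Hodge filtration on $\VV$ furnished by the extension hypothesis is itself the $G$-equivariant pullback of the Hodge filtration from $\U'$, so $g$ acts as the identity on each graded piece $\operatorname{Gr}^p_\FC \VV|_{X^g}$. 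Combining this with $\pi^{g*} = i^{g*} \circ \pi^*$ and the very definition of $ch_{(1+y)}(\chi_y(\VV')) \in H^{ev}(X'; \Q[y^{\pm 1}])$ gives
$$
ch_{(1+y)}(\chi_y(\VV)|_{X^g})(g) = \pi^{g*}\, ch_{(1+y)}(\chi_y(\VV')).
$$

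Finally, I would apply the projection formula $\pi^g_*(\pi^{g*}\alpha \cap \beta) = \alpha \cap \pi^g_*\beta$ to each summand, pull $ch_{(1+y)}(\chi_y(\VV'))$ out of the sum, and recognize what remains as $\frac{1}{|G|}\sum_g \pi^g_* {T_y}_*(X; g) = {T_y}_*(X') = {IT_y}_*(X')$ by Thm.\ref{orb} (the last equality because $X' = X/G$ is a rational homology manifold, or equivalently via (\ref{ZZ2}) of Cor.\ref{twist}). The main obstacle I anticipate is justifying cleanly that the good extension of the Hodge filtration on $\VV = \pi^*\VV'$ required for Thm.\ref{thm1} is really the $G$-equivariant pullback of the Hodge filtration from $\U'$, so that the twisting $(g)$ genuinely disappears over the fixed locus; this rests on the uniqueness of admissible extensions of good VMHS with quasi-unipotent monodromy at infinity. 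Once that point is pinned down, the rest is a formal string of previously established identities.
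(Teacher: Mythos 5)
Your proposal is correct and follows essentially the same route as the paper's proof: reduce via (\ref{ZZ4}) of Cor.\ref{twist} and the identification ${IT_y}_*(X,\LL;g)={T_y}_*(X,\LL;g)$ on the smooth $X$, apply Thm.\ref{thm1}, observe that the $g$-twist in $ch_{(1+y)}(\chi_y(\VV)\vert_{X^g})(g)$ vanishes because the $g$-action on $\VV\vert_{X^g}=(\pi^g)^*\VV'$ is pulled back from the trivial action on $\VV'$, and finish with the projection formula, Thm.\ref{orb}, and the rational homology manifold identity ${T_y}_*(X')={IT_y}_*(X')$. The only cosmetic difference is that the paper settles your worry about the Hodge filtration directly (since $\LL\cong IC_X(\pi^*\LL')[-n]$ on the smooth $X$, the good VMHS structure extends, and $g$ acting trivially on the bundle while preserving the filtration forces trivial action on each graded piece), rather than via uniqueness of admissible extensions.
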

\begin{proof} Let  $\pi^g:X^g \to X'$ denote as before the composition of the projection map $\pi:X \to X'$ with the inclusion $i^g:X^g \hookrightarrow X$. Without loss of generality, we can assume (by further restriction) that  $\pi':=\pi\vert_{\pi^{-1}(\U')}$ is a finite unramified covering of algebraic manifolds, so that we can directly apply Cor.\ref{twist} in our setup.
Formula (\ref{fiveintrob}) follows from the following string of equalities:
 {\allowdisplaybreaks
\begin{eqnarray*}   {IT_y}_*(X',\LL')
&\overset{(\ref{ZZ4})}{=}& \frac{1}{\vert G \vert} \sum_{g \in G} \pi^g_*{{IT_y}}_*(X,\pi'^*(\LL'\vert_{\U'});g)\\ 
&\overset{(\ast)}{=}& \frac{1}{\vert G \vert} \sum_{g \in G} \pi^g_*{{T_y}}_*(X,\LL;g)\\ 
&\overset{(\ref{four})}{=}&  \frac{1}{\vert G \vert} \sum_{g \in G} \pi^g_* \left( ch_{(1+y)}(\chi_y(\VV) \vert_{X^g})(g) \cap {T_y}_*(X;g) \right) \\ 
&\overset{(\ast\ast)}{=}&  \frac{1}{\vert G \vert} \sum_{g \in G} \pi^g_* \left( ch_{(1+y)}(\chi_y(\VV) \vert_{X^g}) \cap {T_y}_*(X;g) \right) \\ 
&=&  \frac{1}{\vert G \vert} \sum_{g \in G} \pi^g_* \left( {\pi^g}^* ch_{(1+y)}(\chi_y(\VV') ) \cap {T_y}_*(X;g) \right) \\ 
&\overset{(\ast\ast\ast)}{=}& ch_{(1+y)}(\chi_y(\VV') ) \cap \left(  \frac{1}{\vert G \vert} \sum_{g \in G} \pi^g_*{T_y}_*(X;g)  \right) \\
&\overset{(\ref{Z})}{=}& ch_{(1+y)}(\chi_y(\VV') ) \cap {T_y}_*(X')\\
&=& ch_{(1+y)}(\chi_y(\VV') ) \cap {IT_y}_*(X').
\end{eqnarray*} }

\noindent The last equality follows from the fact that $X'$ is a rational homology manifold, so that $IC_{X'}^H[-n]\cong \Q^H_{X'}$, where $n={\rm dim}_{\C}(X')$. Let us now explain the equalities labelled by $(\ast)$, $(\ast\ast)$ and $(\ast\ast\ast)$. First note that, since $X$ is smooth of dimension $n$,  
$$\LL \cong IC_X(\pi^*\LL')[-n].$$ Therefore, as $\LL'$ is generically a ``good" variation of mixed Hodge structures, it follows that $\LL$ underlies a (shifted) $G$-equivariant smooth mixed Hodge module on $X$. So $\LL$  is a $G$-equivariant ``good" variation of mixed Hodge structures on all of $X$. This explains $(\ast)$, and the fact that $$ch_{(1+y)}(\chi_y(\VV) ) \in H^{ev}(X;\Q[y^
{\pm 1}])^G.$$ 
For $(\ast\ast)$ it suffices to show that $g$ acts trivially on $\VV \vert_{X^g}$ along each connected component of $X^g$. Indeed, since $g$ acts as an automorphism of a variation of mixed Hodge structures, this implies then that $g$ also acts trivially on the corresponding pieces of the Hodge filtration of $\VV\vert_{X^g}$, as well as on the graded pieces $Gr^p_{\FC} \VV \vert_{X^g}$ appearing in the definition of $ch_{(1+y)}(\chi_y(\VV) \vert_{X^g})(g)$. But $\VV \vert_{X^g}=(\pi^g)^*(\VV')$, with the $g$-action induced by pull-back of the trivial $g$-action on $\VV'$ (as defined in the Appendix). However, $g$ acts trivially on $X'$, as well as on $X^g$, so that this induced action is also trivial. Finally, $(\ast\ast\ast)$ follows by the projection formula.
\end{proof}

\begin{remark}\label{defect}\rm Let $\LL'$ be a local system defined on a Zariski-dense open subset $\U$ of $X'$, which generically underlies a good variation of mixed Hodge structures on a Zariski-dense open smooth subset $\U' \subset \U$.
Then all arguments in the proof of the above theorem go through under the weaker assumption that only $\pi_{\U}^*\LL'$ extends to a local system $\LL$ on all of $X$, with $\pi_{\U}:=\pi \vert_{\pi^{-1}(\U)}$, except for the equality labelled by $(\ast\ast)$. Even under this weaker assumption, $ch_{(1+y)}(\chi_y(\VV') )$ can be defined as before, but the Atiyah-Meyer formula (\ref{fiveintrob}) need not be true. In fact, our proof above provides the following ``defect formula": 
\begin{multline}\label{def2}
{IT_y}_*(X',\LL')-ch_{(1+y)}(\chi_y(\VV') ) \cap {IT_y}_*(X')\\
= \frac{1}{\vert G \vert} \sum_{g \in G} \pi^g_* \left( \left( ch_{(1+y)}(\chi_y(\VV) \vert_{X^g})(g)-ch_{(1+y)}(\chi_y(\VV) \vert_{X^g}) \right) \cap {T_y}_*(X;g) \right). 
\end{multline}
Note that for a connected component $S$ of some fixed-point manifold $X^g$, the difference term
$$ch_{(1+y)}(\chi_y(\VV) \vert_{X^g})(g)-ch_{(1+y)}(\chi_y(\VV) \vert_{X^g})$$ vanishes if $g=id_G$ or if $\pi^g(S)$ is not contained in the ``singular locus" $D':=X' \setminus \U$ of $\LL'$. In fact, if $S \cap \pi^{-1}(\U) \neq \emptyset$,  then $g$ acts trivially on $\VV\vert_{S \cap \pi^{-1}(\U)}$ by the same argument as in the proof of the above theorem. Since $S \cap \pi^{-1}(\U)$ is open and dense in $S$, and $\VV$ is locally free on $S$, it follows that $g$ acts trivially on $\VV\vert_{S}$. As a consequence, the difference term appearing on the right hand side of the defect formula (\ref{def2}) can be regarded as a Borel-Moore homology class localized at the ``singular locus" $D'$ of $\LL'$. In particular, the Borel-Moore homology classes ${IT_y}_*(X',\LL')$ and $ch_{(1+y)}(\chi_y(\VV') ) \cap {IT_y}_*(X')$ agree in homology degrees greater than the complex dimension of $D'$. As an example, if $D'$ is a finite set of points in $X'$, the defect formula becomes:
\begin{multline}\label{def3}
{IT_y}_*(X',\LL')-ch_{(1+y)}(\chi_y(\VV') ) \cap {IT_y}_*(X')\\
= \frac{1}{\vert G \vert} \sum_{g \neq id_G} \left( \sum_{x \in D^g} 
\left( \chi_y(\LL_x;g) - \chi_y(\LL_x) \right) \cdot \prod_{0<\theta_x<2\pi} \frac{1+ye^{-i\theta_x}}{1-e^{-i\theta_x}}  \right), 
\end{multline}
with $D^g$ the set of isolated fixed points of $g$ which are contained in $\pi^{-1}(D')$, and $\theta_x$ the eigenvalues of $g$ acting on the tangent space $T_xX$ of $X$ at $x$.
\end{remark}

\subsection{Equivariant Hirzebruch classes}\label{EH} The main results of Section \ref{comporb} suggests that one should collect the Atiyah-Singer class transformations ${\mt_y}_*(g)$ (respectively,  ${\widetilde{{\mt_y}_*}}(g)$) all at once as a transformation 
$${\mt_y^G}_*:= \oplus_{g \in G} \ \frac{1}{\vert G \vert} \cdot {\mt_y}(g): K_0(\mh^G(X)) \to \oplus_{g \in G} \ H^{BM}_{ev}(X^g;\Q) \otimes \C[y^{\pm 1},(1+y)^{-1}].$$
Note that the disjoint union $\bigsqcup_{g \in G} X^g$ gets by $h: X^g \to X^{hgh^{-1}}$ an induced $G$-action, such that the canonical map $$i: \bigsqcup_{g \in G} X^g \to X$$ defined by the inclusions of fixed point sets is $G$-equivariant. Therefore, $G$ acts in a natural way on  $\oplus_{g \in G} \ H^{BM}_{ev}(X^g;\Q)$, and we call the $G$-invariant part $$H^{BM}_{ev,G}(X;\Q):=\left( \oplus_{g \in G} \ H^{BM}_{ev}(X^g;\Q) \right)^G$$ the {\it delocalized} $G$-equivariant Borel-Moore homology of $X$. This notion is different (except for free actions) than the equivariant Borel-Moore homology $H^G_{BM,ev}(X;\Q)$ defined by the Borel construction (e.g., see \cite{EG}[Sect.2.8]). Since $G$ is finite, and we work with $\Q$-coefficients, one just has $$H^G_{BM,ev}(X;\Q)\simeq \left(H^{BM}_{ev}(X;\Q)  \right)^G \simeq H^{BM}_{ev}(X/G;\Q),$$ which is a direct summand of $H^{BM}_{ev,G}(X;\Q)$ corresponding to the identity element of $G$.

By functoriality for $h:(X,X^g) \to (X,X^{hgh^{-1}})$, with $g,h \in G$, one has that
\begin{equation}\label{conjugation}
 h_*\left( {\mt_y}_*(g)([\MC])\right)= {\mt_y}_*(hgh^{-1})\left( h_*[\MC]\right)
= {\mt_y}_*(hgh^{-1})\left( [\MC]\right) ,
\end{equation}
where the last equality uses the isomorphism $\psi_h$ coming from the $G$-action on $\MC\in D^{b,G}(\mh(X))$.
Note that $h$ is equivariant with respect to the action of the cyclic group generated by
$g$ resp. $g':=hgh^{-1}$. Moreover, $h_*\MC$ gets by {\it conjugation} an induced action $\psi^{conj}_{g'}: h_*\MC\to g'_*h_*\MC$ of the cyclic group generated by $g'$
making the left square of the following diagram commutative:
\begin{equation}\begin{CD}
\MC @>\psi_h > \sim > h_*\MC @> h_*(\psi_g) > \sim > h_*g_*\MC \\
@V \psi_{g'} V \wr V @V \psi^{conj}_{g'} V \wr V @VV \wr V \\
g'_*\MC @>g'_*(\psi_h) > \sim > g'_*h_*\MC @>> \sim >  (h\circ g)_*\MC \:. 
\end{CD}\end{equation}
So the isomorphism $\psi_h: \MC\to h_*\MC$ is equivariant for the induced action of $g'$ on $\MC$ and $h_*\MC$, respectively,
coming from the restriction of the $G$-action and by the action induced by conjugation, respectively. This implies the last equality in (\ref{conjugation}).
But also the outer square is commutative, since
$$h_*(\psi_g) \circ \psi_h= \psi_{hg} = g'_*(\psi_h)\circ \psi_{g'}: \MC\to (h\circ g)_*\MC \:.$$
So the isomorphism $\psi^{conj}_{g'}$ makes the  right square above commutative. By definition this is also true for the isomorphism
$$\psi_{g'}:=h_*(\psi_g): h_*\MC \to h_*g_*\MC \simeq   (h\circ g)_*\MC  \simeq g'_*h_*\MC$$
induced by pushing down along $h$ (see Appendix A), giving the first equality in (\ref{conjugation}).
Hence, we just showed abstractly that these two induced actions (by conjugation and push down) of (the cyclic group generated by) $g'$ on $h_*\MC$ agree.
The same argument also applies to the Lefschetz-Riemann-Roch transformation $td_*(g)$ of \cite{BFQ, M}. \\

So the image of ${\mt_y^G}_*$ is contained in $H^{BM}_{ev,G}(X;\Q)\otimes \C[y^{\pm 1},(1+y)^{-1}]$ (see also \cite{M}[Sect.II.1] for a direct argument in case $X$ is smooth). This leads to the following 
\begin{defn}
The \underline{$G$-equivariant Hirzebruch class transformation} 
$${\mt_y^G}_*: K_0(\mh^G(X)) \to  H^{BM}_{ev,G}(X;\Q) \otimes \C[y^{\pm 1},(1+y)^{-1}]$$
is defined by: $${\mt_y^G}_*:=\oplus_{g \in G} \ \frac{1}{\vert G \vert} \cdot {\mt_y}_*(g).$$
\end{defn}

This transformation has the same properties as the Atiyah-Singer class transformations, e.g., functoriality for proper push-downs and open restrictions, restrictions to subgroups, and multiplicativity for exterior products.

Theorem \ref{gen} can now be reformulated as the commutativity of the following diagram (which proves Thm.\ref{thintro2} from the Introduction): 
\begin{equation}\label{refor}\begin{CD}
K_0(\mh^G(X)) @> {\mt_y^G}_* >> H^{BM}_{ev,G}(X;\Q) \otimes \C[y^{\pm 1},(1+y)^{-1}]\\
@V [-]^G\circ \pi_* VV @VV (\pi\circ i)_* V \\
K_0(\mh(X/G)) @> {\mt_y}_* >> H^{BM}_{ev}(X/G;\Q) \otimes \C[y^{\pm 1},(1+y)^{-1}].
\end{CD}\end{equation}

Identical considerations apply to the un-normalized Atiyah-Singer transformations and the associated un-normalized $G$-equivariant Hirzebruch transformation ${\widetilde{\mt_y^G}}_*$.
This transformation can also be defined as ${\widetilde{\mt_y^G}}_*:= td_*^G\circ \mc^G$, with
\begin{equation}
td_*^G:=\oplus_{g \in G} \ \frac{1}{\vert G \vert} \cdot td_*(g):  K_0({\rm Coh}^G(X)) \to  H^{BM}_{ev,G}(X;\Q) \:.
\end{equation}

\section{Appendix A: Equivariant categories and Grothendieck groups}

In this appendix, we collect in abstract form the results on  equivariant categories and Grothendieck groups, as needed in this paper. We follow closely the treatment from \cite{MS09}[Appendix]. For simplicity we only work in the underlying category $var^{qp}/\C$ of complex quasi-projective algebraic varieties
(whereas \cite{MS09}[Appendix] applies to any (small) category $space$ of spaces with finite products and a terminal object). 

\subsection{Equivariant categories}
Let $A$ be a {\em covariant pseudofunctor} on $var^{qp}/\C$, taking values in 
($\Q$-linear) additive (or abelian, resp. triangulated) categories. This means a ($\Q$-linear) additive (or abelian, resp. triangulated) category $A(X)$ for each $X\in ob(var^{qp}/\C)$, together with ($\Q$-linear) additive (and exact) push-down functors $f_*: A(X)\to A(Y)$ for a suitable class of morphisms
$f: X\to Y$ in $var^{qp}/\C$, which is closed under composition, exterior products and which contains all isomorphisms.
In addition, one asks for some
 natural isomorphisms $c: (kf)_*\simeq k_*f_*$ and $e: id\simeq id_*$ satisfying suitable conditions
(as spelled out in \cite{MS09}[Appendix] in a slightly more general form) to hold. Just to simplify the notation, we usually write them here as equalities
$(kf)_*= k_*f_*$ and $id=id_*$.

\begin{example}\label{pseudo}
The examples we need in this paper are the following ones:
\begin{enumerate}
 \item[(i)] $A(X)$ is the $\Q$-linear abelian category: 
\begin{enumerate}
\item[(a)] $\mh(X)$ of mixed Hodge modules;
\item[(b)] $\co(X)$ of coherent algebraic sheaves; 
\item[(c)] $Perv(X;\Q)$ of perverse sheaves with rational coefficients on $X$. 
\end{enumerate}
These are all covariant functorial for the class of {\em finite} morphisms.
\item[(ii)] $A(X)$ is the $\Q$-linear bounded derived category: 
\begin{enumerate}
\item[(a)] $D^b\mh(X)$ of mixed Hodge modules;
\item[(b)] $D^b_{\rm coh}(X)$ of sheaves of $\OO_X$-modules with coherent cohomology sheaves;  
\item[(c)] $D^b_c(X;\Q)$ of algebraically constructible sheaf complexes with rational coefficients on $X$. 
\end{enumerate}
These are all covariant functorial for the class of {\em proper} morphisms. More generally, $D^b\mh(X)$ and $D^b_c(X;\Q)$ are covariant functorial for the class of all morphisms with respect to $f_!$ resp. $f_*$. 
\end{enumerate}
\end{example}

\begin{defn}
Let $G$ be a finite group of algebraic automorphisms of $X$. Then a $G$-equivariant object $M$ in
$A(X)$ is given by the underlying object $M$, together with isomorphisms
$$\psi_g: M\to g_*M \quad (g\in G)$$
such that $\psi_{id}=id$ and $\psi_{gh}=g_*(\psi_h)\circ \psi_g$ for all $g,h \in G$.
A $G$-equivariant morphism $M\to M'$ of such equivariant objects is just a morphism in
$A(X)$ commuting with the isomorphisms $\psi_g$ above. The corresponding category $A^G(X)$ of $G$-equivariant objects is then again a ($\Q$-linear) additive 
(resp. abelian) category. Moreover, for a subgroup
$H\subset G$ one has obvious ($\Q$-linear) additive (and exact) restriction functors $Res^G_H: A^G(X)\to A^H(X)$, thus a forgetful
functor ${\text For}=Res^G_{\{id\}}: A^G(X)\to A(X)$.
\end{defn}

\begin{remark}\rm
The structure of a pseudofunctor $A(-)$ on $var^{qp}/\C$ allows one to define the category $A^{op}/(var^{qp}/\C)$ of pairs $(X,M)$, with $X\in ob(var^{qp}/\C)$ and
$M\in A(X)$, and a morphism $(X,M)\to (X',M')$ given by a (suitable) morphism $f: X\to X'$ in $var^{qp}/\C$ together with a morphism $\psi: M'\to f_*M$ in $A(X')$ (with the obvious composition). Then a $G$-equivariant object $M\in A(X)$ corresponds to a usual $G$-action on $(X,M)$ in the category  $A^{op}/(var^{qp}/\C)$ (see \cite{MS09}[Appendix]). 
So if $G$ acts {\em trivially} on $X$, this just means a $G$-action on $M$ in the categroy $A(X)$.
\end{remark}

It follows that a (suitable) $G$-equivariant morphism $f: X\to Y$ in $var^{qp}/\C$ induces a {\em functorial push-down 
functor} $f_*^G: A^G(X)\to A^G(Y)$. Indeed,  for $M\in ob( A^G(X))$, $f_*M$  inherits a $G$-action via
$$\begin{CD}
f_*M @> f_*(\psi_g) >>   f_*(g_*M)\simeq (fg)_*M=(gf)_*M\simeq g_*(f_*M).
  \end{CD}$$

For the categories mentioned in Example \ref{pseudo} we also get suitable induced {\em pullback} and  {\em exterior product functors} on the corresponding  equivariant categories. Let us first explain the pullback 
\begin{equation}f^*_G: A^G(X)\to A^G(Y) , \end{equation}
for a suitable class of $G$-equivariant morphisms $f: Y\to X$. In order to get an induced $G$-action on $f^*M$ (for $M\in ob( A^G(X))$)  we  need functorial base change isomorphisms $f^*g_*\simeq g_*f^*$ for the cartesian diagram (with $g\in G$):
 
$$\begin{CD}
  Y @> g >> Y\\
@V f VV @VV f V\\
X @> g>> X.
  \end{CD}$$ Then the $G$-action is obtained via
$$\begin{CD}
f^*M @> f^*(\psi_g) >>   f^*(g_*M)\simeq g_*f^*M.
  \end{CD}$$  

\begin{example}\label{pull}
Such base change isomorphisms are available in the following cases:
\begin{enumerate}
 \item[(i)] For all pullbacks $f^*$ (or extraordinary pullbacks $f^!$) in the cases $A(X)=D^b\mh(X)$ and $A(X)=D^b_c(X;\Q)$.
\item[(ii)] For all pullbacks $f^*: D^b_{coh}(X)\to D^b_{coh}(Y)$ under smooth (or flat)  morphisms $f: Y\to X$,
e.g., for open inclusions.
\end{enumerate}
Therefore from the constant (equivariant) elements $\Q^H_{pt}\in \mh(\{pt\}), \Q_{pt}\in Perv(\{pt\};\Q)$ and $\C_{pt}\in \co(\{pt\})$, with a trivial action of $G$,
we get by the pullback $k^*$ under a constant (flat) map $k: X\to \{pt\}$ the $G$-equivariant elements  
$\Q^H_X\in D^{b,G}\mh(X), \Q_X\in D^{b,G}_c(X;\Q)$ and $\OO_X\in \co^G(X)$.
\end{example}

Assume now that for all $X,X' \in var^{qp}/\C$ there is an exterior product
$$\boxtimes: A(X)\times A(X')\to A(X\times X'),$$
which is a bifunctor, ($\Q$-linear) additive (resp. exact) in each variable.
We require in addition to have K\"unneth isomorphisms
$${\it Kue}: f_*M\boxtimes f'_*M' \simeq (f\times f')_*(M\boxtimes M')\:,$$ 
functorial in $M\in ob(A(X))$ and $M'\in ob(A(X'))$, and satisfying certain compatibilities as spelled out in \cite{MS09}[Sect.4.3]. 
Then for $X$ a $G$-space and $X'$ a $G'$-space in our category $var^{qp}/\C$, with $G,G'$ finite groups, we get an induced equivariant exterior product
$$\boxtimes: A^G(X)\times A^{G'}(X')\to A^{G\times G'}(X\times X')$$
via
$$\psi_{(g,g')}:= {\it Kue} \circ (\psi_g,\psi_{g'}):  M\boxtimes M' \to (g_*M)\boxtimes (g'_*M')\simeq
(g,g')_*(M\boxtimes M').$$
And by the K\"unneth isomorphisms, this commutes with push-downs, i.e.,
\begin{equation}\label{Kue1}
 (f^G_*)\boxtimes (f'^{G'}_*) = (f,f')^{(G\times G')}_* \: .
\end{equation}

\begin{example}
Such an exterior product with K\"unneth isomorphisms is readily available in all  cases mentioned in 
Example \ref{pseudo} (as explained in more detail in \cite{MS09,MSS}).
Similarly, one has
\begin{equation}\label{Kue2}
 (f_G^*)\boxtimes ({f'_{G'}}^*) = (f,f')_{(G\times G')}^* \: 
\end{equation}
in all cases of Example \ref{pull}.
\end{example}

\subsection{Equivariant Grothendieck groups}
Since all categories (and functors) considered above are additive, we can make use of the corresponding Grothendieck group $\bar{K}_0(A(X))$, associated to the abelian monoid of isomorphism classes of objects with the direct sum. And similarly for the induced functors.

If, in addition, $A(X)$ is also abelian, the corresponding Grothendieck group 
$$K_0(A(X)):=\bar{K}_0(A(X))/\sim $$
is defined as the quotient of $\bar{K}_0(A(X))$ by the relations given by
$[M]=[M']+[M'']$ for any short exact sequence 
$$\begin{CD} 
0 @>>> M' @>>> M @>>> M'' @>>> 0   
  \end{CD}$$
in $A(X)$. This applies to the abelian categories $\mh(X)$,
$\co(X)$ and $Perv(X;\Q)$ from Example \ref{pseudo},
as well as to the corresponding equivariant categories (if the push down functors $f_*$ are exact).
These Grothendieck groups are functorial for exact functors of (equivariant) abelian categories,
e.g., push-forward for finite morphisms, exterior products, restriction to open subsets (resp.
restriction to subgroups and forgetful functors).\\

For a triangulated category $A(X)$, e.g., the derived categories $D^b\mh(X)$,
$D^b_{\rm coh}(X)$  or $D^b_c(X;\Q)$ from Example \ref{pseudo}, the correspnding 
Grothendieck group 
$$K_0(A(X)):=\bar{K}_0(A(X))/\sim $$
is defined as the quotient of $\bar{K}_0(A(X))$ by the relations given by
$[M]=[M']+[M'']$ for any distinguished triangle
$$\begin{CD} 
 M' @>>> M @>>> M'' @> [1] >>  M'[1]  
  \end{CD}$$
in $A(X)$. 
In this triangulated context we assume the push down functors $f_*$ to be exact, so that they induce similar group 
homomorphisms on the Grothendieck group level. 
But this notion can't be used directly for our (weak) equivariant derived 
categories $D^{b,G}\mh(X)$,
$D^{b,G}_{\rm coh}(X)$ and $D^{b,G}_c(X;\Q)$, since these are not triangulated.
Nevertheless, we can use instead a suitable notion of an {\em equivariant distinguished triangle} in $A^G(X)$, given by a triangle
$$\begin{CD} 
 M' @>\alpha >> M @> \beta >> M'' @> \gamma >>  M'[1]  ,
  \end{CD}$$
such that the underlying (non-equivariant) triangle is distinguished in the triangulated category $A(X)$, and all morphisms
$\alpha, \beta$ and $\gamma$ are $G$-equivariant (compare \cite{S}[Sect.3.1.1]).
The problem arises from the fact that not any equivariant morphisms can be extended
to such an equivariant distinguished triangle (because ``the'' cone is not unique).\\

In this paper we only need the corresponding Grothendieck group 
$$K_0(A^G(X)):=\bar{K}_0(A^G(X))/\sim \:,$$
defined as above by making  use of the equivariant distinguished triangles in $A^G(X)$.

Most of the standard calculus on Grothendieck groups of triangulated categories 
remains valid in our weaker context. For example, from an equivariant isomorphism $M\overset{\sim}{\to} M'$ one gets an equivariant distinguished triangle $M\overset{\sim}{\to} M'\to 0 \to M[1]$. Similarly, direct sums and shifts of equivariant distinguished triangles are of the same type. This implies:
\begin{enumerate}
\item[(a)] $[0]\in K_0(A^G(X))$ is the zero element of this group.
 \item[(b)] $[M]=[M']\in K_0(A^G(X))$, if there is an equivariant isomorphism 
$M\overset{\sim}{\to} M'$.
\item[(c)] $\left[M[1]\right]= - [M]$ and
 $\left[M\oplus M'\right]= [M]+ [M']$.
\end{enumerate}

\begin{example}\label{66}
Let $A$, $A'$ be covariant pseudofunctors taking values in triangulated categories. For $G$ a finite group acting on $X, X'$, consider an exact functor of triangulated categories
$F: A(X)\to A'(X')$ commuting with the $G$-action. This induces a 
$G$-equivariant functor $F^G: A^G(X)\to {A'}^G(X')$, with underlying functor
$F={\it For}(F^G)$, and a 
 group homomorphism $F^G: K_0(A^G(X))\to K_0({A'}^G(X'))$. This observation applies to the following examples used in this paper:
 \begin{enumerate}
  \item[(a)] For suitable $G$-morphisms like proper maps
(resp. smooth maps or open inclusions)
 $f: X\to Y$ in $var^{qp}/\C$, we get induced group homomorphisms like $f^G_*$ (resp. $f_G^*$), as well as exterior products.
\item[(b)] The functor ${\rm gr}^F_p DR^G:D^{b,G}\mh(X) \to D^{b,G}_{\rm coh}(X)$ used in our definition of the equivariant motivic Chern class transformation $\mc^G$.
 \end{enumerate}
 
\end{example}

Finally, the group isomorphisms
$$K_0(\mh^G(X)))\simeq  K_0(D^{b,G} \mh(X)) 
\quad \text{and} \quad
 K_0(D_{\rm coh}^{b,G}(X))\simeq
K_0({\rm Coh}^G(X)) $$
used in our construction of the equivariant motivic Chern class transformation $\mc^G$
follow from the following abstract result (compare \cite{S}[Lem.3.3.1]):

\begin{lemma}\label{last}
Let $T$ be covariant pseudofunctor taking values in triangulated categories, with  $G$ a finite group acting on $X$.
Assume $T(X)$ has a {\em bounded}  $t$-structure,
invariant under the action of the finite group $G$ (i.e., under $g_*$ for all $g\in G$). Let $Ab(X)$ be the heart of  this $t$-structure, with $h: T(X)\to Ab(X)$ the corresponding cohomology functor.
\begin{enumerate}
 \item[(a)] 
There is an induced equivariant functor
$h^G:  T^G(X)\to Ab^G(X)$, mapping equivariant distinguished triangles into equivariant
long exact cohomology sequences. So it induces a well-defined group homomorphism
$$\alpha: K_0\left( T^G(X)\right) \to K_0\left( Ab^G(X)\right);\: \ 
[M]\mapsto \sum (-1)^i\cdot \left[h^G\left( M[-i]\right)\right]\:$$
compatible with the forgetful functors ${\it For}$.
\item[(b)] Assume in addition that the $t$-structure on $T(X)$ is also {\em non-degenerate}.
Then $\alpha$ is an isomorphism of abelian groups 
$$K_0\left( T^G(X)\right) \simeq K_0\left( Ab^G(X)\right).$$
\end{enumerate}
\end{lemma}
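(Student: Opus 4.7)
My plan is to split the proof along the division of the lemma: in (a), I will lift the cohomology functor $h$ to the equivariant setting and verify that the alternating-sum formula respects the Grothendieck-group relations; in (b), I will produce an inverse $\beta$ via the inclusion of the heart into $T^G(X)$ and use truncation triangles to show that $\beta \circ \alpha = \mathrm{id}$.

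For (a), the $G$-invariance of the $t$-structure means each $g_*$ preserves $T^{\leq 0}(X)$ and $T^{\geq 0}(X)$, and hence commutes (up to canonical isomorphism) with the truncations $\tau^{\leq n}$, $\tau^{\geq n}$ and with the cohomology functor $h$. This yields natural base-change isomorphisms $\phi_g : h \circ g_* \overset{\sim}{\to} g_* \circ h$. For $M \in T^G(X)$ with structure isomorphisms $\psi_g : M \to g_* M$, the composites $\phi_g \circ h(\psi_g) : h(M) \to g_* h(M)$ endow $h(M)$ with a $G$-equivariant structure, and the cocycle condition follows from that of the $\psi_g$ together with the transitivity of $\phi$. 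Applying this to $M[-i]$ promotes each $M \mapsto h(M[-i])$ to a functor valued in $Ab^G(X)$. Given an equivariant distinguished triangle $M' \to M \to M'' \to M'[1]$, the associated long exact cohomology sequence in $Ab(X)$ is automatically equivariant (connecting morphisms are natural), hence exact in $Ab^G(X)$; the standard telescoping via images and kernels in the abelian category $Ab^G(X)$ then gives the relation $\sum_i (-1)^i\bigl([h(M'[-i])] - [h(M[-i])] + [h(M''[-i])]\bigr) = 0$ needed for $\alpha$ to descend.

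For (b), I define $\beta : K_0(Ab^G(X)) \to K_0(T^G(X))$ by $[N] \mapsto [N]$, viewing $N \in Ab^G(X)$ inside $T^G(X)$ via the inclusion of the heart. To see $\beta$ is well-defined, a short exact sequence $0 \to N' \to N \to N'' \to 0$ in $Ab^G(X)$ yields a distinguished triangle $N' \to N \to N'' \to N'[1]$ in $T(X)$ canonically associated to the sequence; its connecting morphism realizes the extension class in $\mathrm{Ext}^1_{Ab(X)}(N'', N')$, which is an intrinsic invariant of the (equivariant) short exact sequence, hence equivariant, so the triangle lifts to an equivariant distinguished triangle. The equality $\alpha \circ \beta = \mathrm{id}$ is immediate from $h(N) = N$ and $h(N[-i]) = 0$ for $i \ne 0$. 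For $\beta \circ \alpha = \mathrm{id}$, I use the truncation triangles $\tau^{\leq n-1} M \to \tau^{\leq n} M \to h^n(M)[-n] \to [1]$. The $G$-invariance of the $t$-structure upgrades $\tau^{\leq n}$ and $\tau^{\geq n}$ to equivariant functors (via the universal property of the truncations applied to $g_* M$), so these triangles are equivariant. Passing to $K_0(T^G(X))$ yields $[\tau^{\leq n} M] = [\tau^{\leq n-1} M] + (-1)^n [h^n(M)]$, and boundedness forces $\tau^{\leq n} M = 0$ for $n \ll 0$ and $\tau^{\leq n} M = M$ for $n \gg 0$, so induction gives $[M] = \sum_n (-1)^n [h^n(M)] = \beta(\alpha([M]))$.

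The main obstacle is the lifting in the construction of $\beta$: I need to argue that a distinguished triangle arising from a short exact sequence in the heart has an equivariant connecting morphism, even though cones in triangulated categories are not functorial. The resolution is to identify the connecting morphism with the Yoneda extension class, which is determined by the sequence itself and therefore intrinsic. The same subtlety appears in lifting the truncation functors to $T^G(X)$; it is resolved by the $G$-invariance of the aisles $T^{\leq n}, T^{\geq n}$ together with the fact that truncations are characterized up to unique isomorphism by their universal property. Non-degeneracy enters to ensure that the truncation induction detects the full class $[M]$ and that $\alpha$ separates objects with vanishing cohomology, producing the isomorphism claimed in (b).
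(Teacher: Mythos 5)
Your proof is correct and follows essentially the same route as the paper's: the equivariant functor $h^G$ comes from the commutation of $g_*$ with $h$, the inverse $\beta$ is defined via the inclusion of the heart using the uniqueness of the distinguished triangle attached to a short exact sequence in the heart, and $\beta\circ\alpha=\mathrm{id}$ is proved by induction on truncation triangles whose degree-one maps are unique and hence equivariant. The only cosmetic difference is that the paper inducts on the number of nonvanishing cohomology objects using the triangles $h(M[-i])[-i]\to\tau^{\geq i}M\to\tau^{>i}M$, whereas you induct over $n$ via the $\tau^{\leq n}$-triangles.
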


\begin{proof}
Since the $t$-structure is invariant under the $G$-action, all push down functors $g_*$ ($g\in G$) commute with $h$.
So one gets  an induced equivariant functor
$h^G:  T^G(X)\to Ab^G(X)$, mapping equivariant distinguished triangles into equivariant
long exact cohomology sequences. This also defines the group homomorphism $\alpha$.
Note that the sum on the right hand side in the definition of $\alpha$ is finite, since the $t$-structure is {\em bounded}.

On the other hand, the heart $Ab(X)$ is an abelian {\em admissible} subcategory of $T(X)$, i.e.,
any short exact sequence in $Ab(X)$ underlies a {\em unique} distinguished triangle in
$T(X)$. So, if the short exact sequence is assumed to be $G$-equivariant, the same is true for the
corresponding distinguished triangle. Therefore, we also get a well-defined group homomorphism
$$\beta:  K_0\left( Ab^G(X)\right)\to K_0\left( T^G(X)\right); \: \ [M]\mapsto [M].$$

It remains to show that $\alpha$ and $\beta$ are inverse to each other in case the $t$-structure on $T(X)$ is also non-degenerate.
First note that $\alpha \circ \beta= id$, since for $M\in ob\left(Ab(X)\right)$ we get  
$h\left(M[-i]\right)=0$ for $i\neq 0$. Moreover, since the $t$-structure is {\em non-degenerate},
one obtains by definition that $h\left(M[-i]\right)=0$ for all $i$ implies $M=0$. Next we show as in \cite{S}[Lem.3.3.1] 
that $\beta\circ \alpha([M])=[M]$ by using induction over 
$$l(M):=\sharp\{i\in \Z\vert \; h\left(M[-i]\right) \neq 0 \;\}.$$ 
For the induction step we use the distinguished triangle
$$\begin{CD} 
 h\left(M[-i]\right)[-i] @>>> \tau^{\geq i}M @>>>  \tau^{> i}M @>[1]>> h\left(M[-i]\right)[-i+1] 
  \end{CD}$$
for $\tau^{\geq i}$ resp. $\tau^{> i}$  the corresponding truncation functor of the $t$-structure. By the definition of a $t$-structure, the degree one map is {\em unique}.
So a $G$-action on $M$ induces a $G$-action on this distinguished triangle.
This completes the induction step.
\end{proof}

Note that in our examples, $D^{b}\mh(X)$ resp. $D_{\rm coh}^{b}(X)$, the standard $t$-structure is
bounded and non-degenerate, with heart $\mh(X)$ resp. $\co(X)$. Moreover, this $t$-structure is invariant
under the action of $G$, since  for an isomorphism $g$ of $X$ the push-forward $g_*$ is $t$-exact in each of these cases.
Similar considerations apply in the case of the (self-dual) perverse $t$-structure on $D^b_c(X;\Q)$ (for the middle perversity), with heart $Perv(X;\Q)$.\\

As a byproduct of Lemma \ref{last}, one has that for a $G$-equivariant morphism $f: X\to X'$ in $var^{qp}/\C$ and 
$\MC\in D^{b,G}\mh(X)$, all  $R^if_*(\MC):=h^i(f_*\MC)\in \mh(X')$ get an induced $G$-action ($i\in \Z$), making the following diagram commutative:

\begin{equation}\begin{CD}
K_0(D^{b,G}\mh(X)) @> \alpha > \sim > K_0(\mh^G(X)) \\
@V f^G_* VV @V f^G_*:= V \sum_{i}\; (-1)^i\cdot [ R^if_*(-) ] V \\
K_0(D^{b,G}\mh(X')) @> \alpha > \sim > K_0(\mh^G(X')) \:.
\end{CD}\end{equation}

So for the right vertical group homomorphism $ f^G_*: K_0(\mh^G(X))\to K_0(\mh^G(X'))$
one can avoid the use of (weakly) equivariant derived categories  (this is the approach taken in \cite{T}).
But then one needs to prove the
functoriality of this definition, which is easier to get for the left vertical group homomorphism
$$f^G_*: K_0(D^{b,G}\mh(X))\to K_0(D^{b,G}\mh(X'))\:.$$ For the functoriality of the right hand side, one can use a 
corresponding {\em equivariant spectral sequence} $R^if_*\circ R^jf'_*\Rightarrow R^{i+j}(f\circ f')_*$ in  $\mh^G(-)$,
which one directly gets from Lemma \ref{last} and the corresponding $t$-structures.
Similar considerations apply for the other functors  $f_!, f^*$ and $f^!$ on the level of $K_0(D^{b,G}\mh(-))$,
whereas $\boxtimes$ is already exact in both variables. And the corresponding equivariant group homomorphisms on the level of $K_0(D^{b,G}\mh(-))$
have the same  calculus as
the underlying non-equivariant group homomorphisms, e.g., properties like {\em functoriality, projection formula, smooth base-change} and 
(\ref{Kue1}), (\ref{Kue2}).\\

Let us conclude the paper by explaining  the fact that the transformation
$$\chi^G_{\rm Hdg}: K^G_0 (var/X) \to K_0 (\mh^G(X))\simeq K_0 (D^{b,G}\mh(X)),$$ $$[f:Y \to X] \mapsto [f_!\Q^H_Y]$$from the relative Grothendieck group of $G$-equivariant quasi-projective varieties over $X$  to the Grothendieck group of $G$-equivariant mixed Hodge modules is well-defined, i.e., it satisfies the ``scissor" relation (compare with \cite{BSY,Sch3} for the non-equivariant counterpart). Let $i:Z \to Y$ be the closed inclusion of a $G$-invariant subvariety, with open complement $j:\U:=Y \setminus Z \to Y$. Then there is a distinguished triangle in $D^b\mh(Y)$ (see \cite{Sa}[(4.4.1)]:
$$\begin{CD} 
j_!\Q^H_{\U} @>{\rm ad}_j>> \Q^H_Y @>{\rm ad}_i>> i_!\Q^H_Z @>[1]>> , \end{CD}$$
with ${\rm ad}_j$ and ${\rm ad}_i$ the adjunction morphisms for the inclusions $j$ and $i$. By our definition,  ${\rm ad}_j$ and ${\rm ad}_i$ are $G$-equivariant morphisms. Similarly, the shift morphism $[1]$ is $G$-equivariant because  the degree one map $ i_!\Q^H_Z \to  j_!\Q^H_{\U}[1]$ is {\em unique} by 
$$Hom_{D^b{\rm MHM}(Y)}(i_!(-),  j_!(-))=0.$$ The last assertion follows by adjunction, since $j^*i_!=0$ (as can be checked on the underlying constructible sheaf complexes). So, we get:
$$[ \Q^H_Y]=[j_!\Q^H_{\U}]+[i_!\Q^H_Z] \in K_0 (D^{b,G}\mh(Y))\simeq K_0 (\mh^G(Y)).$$ Finally, the scissor relation:
$$[ f_!\Q^H_Y]=[(f\circ j)_!\Q^H_{\U}]+[(f\circ i)_!\Q^H_Z] \in K_0 (D^{b,G}\mh(X))\simeq K_0 (\mh^G(X))$$ follows by applying the functor $f_!$.
Note that $\chi^G_{\rm Hdg}$ commutes with push-downs $g_!$, exterior product and  pull-backs (e.g., open restrictions), as well as forgetful functors for subgroups of $G$.


\begin{thebibliography}{000000}

\bibitem[AS]{AS} M. F. Atiyah, I. M. Singer, \emph{The index of elliptic operators, III.}, Ann. of Math. {\bf 87} (1968), 546-604.

\bibitem[BS]{BS} P. Balmer, M. Schlichting, \emph{Idempotent completion of triangulated categories}, J. Algebra {\bf 236} (2001), no. 2, 819--834.


\bibitem[BCS]{BCS} M. Banagl, S. E. Cappell, J. L.  Shaneson, \emph{Computing twisted signatures and $L$-classes of stratified spaces},  Math. Ann. {\bf 326} (2003), no. 3, 589--623.

\bibitem[Ba]{Ba} M. Banagl, \emph{The signature of partially defined local coefficient systems}, J. Knot Theory Ramifications {\bf 17} (2008), no. 12, 1455--1481.

\bibitem[BC]{BC} P. Baum, A. Connes, \emph{Chern character for discrete groups}, A f\^ete of topology, 163--232, Academic Press, Boston, MA, 1988.

\bibitem[BFM]{BFM} P. Baum, W. Fulton, R. MacPherson, 
\emph{Riemann-Roch for singular varieties}, Publ. Math. I.H.E.S.
{\bf 45}, 101-145 (1975).


\bibitem[BFQ]{BFQ} P. Baum, W. Fulton, G. Quart, \emph{Lefschetz-Riemann-Roch for singular varieties}, Acta Math. {\bf 143} (1979), no. 3-4, 193--211.


\bibitem[BL]{BL} J. Bernstein, V. Lunts, \emph{Equivariant sheaves and functors}, LNM 1578.

\bibitem[B]{Bit} F. Bittner, \emph{The universal Euler characteristic for varieties of characteristic zero},  Compos. Math.  {\bf 140}  (2004),  no. 4, 1011-1032. 

\bibitem[BSY]{BSY} J. P. Brasselet, J. Sch\"urmann, S. Yokura, \emph{Hirzebruch classes and motivic Chern classes of singular
spaces},  Journal of Topology and Analysis {\bf 2} (2010), no. 1, 1-55.


\bibitem[CS91a]{CS} S. E. Cappell, J. L.  Shaneson, \emph{Singular spaces, characteristic classes, and intersection homology}, Ann. of Math. {\bf 134} (1991), 325-374.

\bibitem[CS91b]{CS1} S. E. Cappell, J. L.  Shaneson,  \emph{Stratifiable maps and topological invariants}, J. Amer. Math. Soc. {\bf 4} (1991), no. 3, 521--551.

\bibitem[CSW]{CSW} S. E. Cappell,  J. L. Shaneson, S. Weinbeger, \emph{Classes topologiques caract\'eristique pour les actions de groupes sur les espaces singuliers},  C. R. Acad. Sci. Paris, t. {\bf 313}, S\'erie I, p. 293-295, 1991.

\bibitem[CMSb]{CMS} S. E. Cappell, L. Maxim, J. L. Shaneson,
\emph{Hodge genera of algebraic varieties, I},  Comm. Pure Appl. Math. {\bf 61} (2008), no. 3, 422-449.

\bibitem[CMSc]{eCMS} S. E. Cappell,  L. Maxim, J. L. Shaneson, \emph{Equivariant genera of complex algebraic varieties}, Int. Math. Res. Notices {\bf 2009} (2009), 2013-2037.

\bibitem[CLMSa]{CLMS} S. E. Cappell, A. Libgober, L. Maxim, J. L. Shaneson,
\emph{Hodge genera of algebraic varieties, II},  Math. Ann. {\bf 345} (2009), no. 4, 925-972.

\bibitem[CLMSb]{CLMS2} S. E. Cappell, A. Libgober, L. Maxim, J. L. Shaneson,
\emph{ Hodge genera and characteristic classes of complex algebraic varieties},  Electron. Res. Announc. Math. Sci. {\bf 15} (2008), 1-7.

\bibitem[CMSSY]{CMSSY}S. E. Cappell, L. Maxim, J. Sch\"urmann, J. L. Shaneson, S. Yokura, \emph{Characteristic classes of symmetric products of complex quasi-projective varieties}, arXiv:1008.4299.

\bibitem[Che]{Che} J. Cheeger, 
\emph{Spectral geometry of singular Riemannian spaces},
J. Differential Geom. {\bf 18} (1983), no. 4, 575--657.


\bibitem[De]{De} P. Deligne, \emph{Th\'{e}orie de Hodge, II, III},
Publ. Math. IHES {\bf 40}, {\bf 44} (1972, 1974).


\bibitem[EG]{EG} D. Edidin; W. Graham, \emph{Equivariant intersection theory},  Invent. Math.  {\bf 131}  (1998),  no. 3, 595-634.

\bibitem[G]{G} A. Grothendieck, \emph{ Sur quelques points d'alg\`ebre homologique.} T\^ohoku Math. J. (2) {\bf 9} (1957), 119--221.

\bibitem[GM]{GM1} M. Goresky, R. MacPherson, \emph{Intersection homology theory},  Topology  {\bf 19}  (1980), no. 2, 135-162.

\bibitem[H66]{H} F. Hirzebruch,  \emph{Topological methods in algebraic geometry}, Springer, 1966.

\bibitem[HZ]{HZ} F. Hirzebruch, D. Zagier, \emph{The Atiyah-Singer theorem and elementary number theory}, Mathematics Lecture Series, No. 3. Publish or Perish, Inc., Boston, Mass., 1974.

\bibitem[KS]{KS} S. Kovacs, K. Schwede, \emph{Hodge theory meets the minimal model program: a survey of log canonical and Du Bois singularities}, arXiv:0909.0993, MSRI Publ. {\bf 58}, Cambridge University Press, New York, 2011, pp. 51--94.

\bibitem[LC]{LC} J. Le, X.-W. Chen, \emph{Karoubianness of a triangulated category},
  Journal of Algebra  {\bf 310} (2007), 452--457.


\bibitem[MP]{MP} R. MacPherson, \emph{Chern classes for singular algebraic varieties},
Ann. of Math.  {\bf 100}  (1974), 423--432.

\bibitem[MS08]{MS} L. Maxim, J. Sch\"urmann, \emph{Hodge-theoretic Atiyah-Meyer formulae and the stratified multiplicative property},
in ``Singularities I: Algebraic and Analytic Aspects", Contemp. Math. {\bf 474} (2008), pp. 145-167.

\bibitem[MS09]{MS09} L. Maxim, J. Sch\"urmann, \emph{Twisted genera of symmetric products},   
arXiv:0906.1264.

\bibitem[MS10]{MS10} L. Maxim, J. Sch\"urmann, \emph{Hirzebruch invariants of symmetric products}, in ``Topology of Algebraic Varieties and Singularities", Contemporary Mathematics {\bf 538} (2011), pp. 163-177.

\bibitem[MSS]{MSS} L. Maxim, M. Saito, J. Sch\"urmann, \emph{Symmetric products of mixed Hodge modules}, arXiv:1008.5345, Journal de Math\'ematiques Pures et Appliqu\'ees (to appear).

\bibitem[M]{M} B. Moonen, \emph{Das Lefschetz-Riemann-Roch-Theorem f\"ur singul\"are Variet\"aten}, Bonner Mathematische Schriften {\bf 106} (1978),  viii+223 pp.

\bibitem[Oh]{O} T. Ohmoto, \emph{Generating functions of orbifold Chern classes. I. Symmetric products},  Math. Proc. Cambridge Philos. Soc.  {\bf 144}  (2008),  no. 2, 423-438.

\bibitem[Sa90]{Sa} M. Saito, \emph{Mixed Hodge Modules},
Publ. Res. Inst. Math. Sci.  {\bf 26}  (1990),  no. 2, 221--333.


\bibitem[Sa00]{Sa1} M. Saito, \emph{Mixed Hodge complexes on algebraic varieties},  Math. Ann.  {\bf 316}  (2000),  no. 2, 283-331.

\bibitem[Sc02]{Sch4} J. Sch\"urmann, \emph{A general construction of partial Grothendieck transformations},
arXiv:math/0209299.

\bibitem[Sc03]{S}  J. Sch\"urmann, \emph{Topology of singular spaces and constructible sheaves}, Monografie Matematyczne, 63, Birkh\"auser Verlag, Basel, 2003.

\bibitem[Sc09]{Sch3} J. Sch\"urmann, \emph{Characteristic classes of mixed Hodge modules}, 
arXiv:0907.0584, MSRI Publ. {\bf 58}, Cambridge University Press, New York, 2011,  pp. 419-470. 

\bibitem[Seg]{Se}  G. Segal, \emph{Equivariant $K$-theory}, Publ. IHES {\bf 34} (1968), 129-151.

\bibitem[T]{T} T. Tanisaki, \emph{Hodge modules, equivariant $K$-theory and Hecke algebras}, 
Publ. Res. Inst. Math. Sci. {\bf 23} (1987), no. 5, 841-879.

\bibitem[To]{To} B. Toen, \emph{Th\'eor\`emes de Riemann-Roch pour les champs de Deligne-Mumford} K-Theory {\bf 18} (1999), no. 1, 33--76.

\bibitem[Za]{Za}  D. Zagier, \emph{The Pontrjagin class of an orbit space,} Topology {\bf 11} (1972), 253--264.

\end{thebibliography}
\end{document}